\numberwithin{equation}{section}
\newtheorem{theorem}{Theorem}
\newtheorem{lemma}{Lemma}
\newtheorem{proposition}{Proposition}
\theoremstyle{definition}
\newtheorem{example}[theorem]{Example}
\theoremstyle{remark}
\newtheorem{remark}{Remark}
\begin{document}

\title[Asymptotic Behaviour of SDEs with Fading Stochastic Perturbations]
{On the Classification of the Asymptotic Behaviour of Solutions of
Globally Stable Scalar Differential Equations with Respect to State--Independent Stochastic Perturbations}

\author{John A. D. Appleby}
\address{Edgeworth Centre for Financial Mathematics, School of Mathematical
Sciences, Dublin City University, Glasnevin, Dublin 9, Ireland}
\email{john.appleby@dcu.ie} \urladdr{webpages.dcu.ie/\textasciitilde
applebyj}

\author{Jian Cheng}
\address{Edgeworth Centre for Financial Mathematics, School of Mathematical
Sciences, Dublin City University, Glasnevin, Dublin 9, Ireland}
\email{jian.cheng2@mail.dcu.ie}

\author{Alexandra Rodkina}
\address{Department of Mathematics, The University of the West Indies, Mona
Campus, Mona, Kingston 7, Jamaica}
\email{alexandra.rodkina@uwimona.edu.jm}

\thanks{The first and second author gratefully acknowledge Science Foundation Ireland for the support of this research
under the Mathematics Initiative 2007 grant 07/MI/008 ``Edgeworth
Centre for Financial Mathematics''.} \subjclass{60H10; 93E15; 93D09;
93D20} \keywords{stochastic differential equation, asymptotic
stability, global asymptotic stability, globally bounded, unbounded,
recurrent, simulated annealing, fading stochastic perturbations}
\date{4 February 2013}

\begin{abstract}
In this paper we characterise the global stability, global
boundedness and recurrence of solutions of a scalar nonlinear
stochastic differential equation. The differential equation is a
perturbed version of a globally stable autonomous equation with
unique equilibrium where the diffusion coefficient is independent of
the state. We give conditions which depend on the rate of decay of
the noise intensity under which solutions either (a) tend to the
equilibrium almost surely, (b) are bounded almost surely but tend to
zero with probability zero, (c) or are recurrent on the real line
almost surely. We also show that no other types of asymptotic
behaviour are possible. Connections between the conditions which
characterise the various classes of long--run behaviour and simple
sufficient conditions are explored, as well as the relationship
between the size of fluctuations and the strength of the mean
reversion and diffusion coefficient, in the case when solutions are
a.s. bounded.
\end{abstract}

\maketitle

\section{Introduction}
In this paper, we characterise the global asymptotic stability of
the unique equilibrium of a scalar deterministic ordinary
differential equation when it is subjected to a stochastic
perturbation independent of the state.

We fix a complete filtered probability space $(\Omega,\mathcal{F},
(\mathcal{F}(t))_{t\geq 0},\mathbb{P})$. Let $B$ be a standard
one--dimensional Brownian motion which is adapted to
$(\mathcal{F}(t))_{t\geq 0}$. We consider the stochastic
differential equation
\begin{equation} \label{eq.sde}
dX(t)=-f(X(t))\,dt + \sigma(t)\,dB(t), \quad t\geq 0; \quad
X(0)=\xi\in\mathbb{R}.
\end{equation}
We suppose that
\begin{equation} \label{eq.fglobalunperturbed}
f\in C(\mathbb{R};\mathbb{R}); \quad xf(x)>0, \quad x\neq 0; \quad
f(0)=0.
\end{equation}
and that $\sigma$ obeys
\begin{equation} \label{eq.sigcns}
\sigma \in C([0,\infty);\mathbb{R}).
\end{equation}
These conditions ensure the existence of a continuous adapted process which
obeys \eqref{eq.sde} on $[0,\infty)$, and we will refer to any such process as a solution.
We do not rule out the existence of more than one process, but part of our analysis will show that
all solutions share the same asymptotic properties. Hypotheses such as local Lipschitz continuity or
monotonicity can be imposed in order to guarantee that there is a unique such solution.

In the case when $\sigma$ is identically zero, it follows under the
hypothesis \eqref{eq.fglobalunperturbed} that any solution $x$ of
\begin{equation}\label{eq.unperturbed}
x'(t)=-f(x(t)), \quad t>0; \quad x(0)=\xi,
\end{equation}
obeys
 \begin{equation} \label{eq.xglobalstable}
\lim_{t\to\infty} x(t;\xi)=0 \text{ for all $\xi\in \mathbb{R}$}.
\end{equation}
Clearly $x(t)=0$ for all $t\geq 0$ if $\xi=0$. The question
naturally arises: if any solution $x$ of \eqref{eq.unperturbed}
obeys \eqref{eq.xglobalstable}, under what conditions on $f$ and
$\sigma$ does any solution $X$ of \eqref{eq.sde} obey
\begin{equation} \label{eq.stochglobalstable}
\lim_{t\to\infty} X(t,\xi)=0, \quad \text{a.s. for each
$\xi\in\mathbb{R}$}.
\end{equation}
The convergence phenomenon captured in \eqref{eq.stochglobalstable}
for any solution of \eqref{eq.sde} is often called almost sure
\emph{global convergence} (or \emph{global stability} for the
solution of \eqref{eq.unperturbed}), because solutions of the
perturbed equation \eqref{eq.sde} converge to the zero equilibrium
solution of the underlying unperturbed equation
\eqref{eq.unperturbed}.

It was shown in Chan and Williams~\cite{ChanWill:1989} that if $f$
is strictly increasing with $f(0)=0$ and $f$ obeys
\begin{equation} \label{eq.ftoinfty}
\lim_{x\to\infty} f(x)=\infty, \quad \lim_{x\to-\infty}
f(x)=-\infty,
\end{equation}
then any solution $X$ of \eqref{eq.sde} obeys
\eqref{eq.stochglobalstable} holds if $\sigma$ obeys
\begin{equation} \label{eq.sigmalogto0}
\lim_{t\to\infty} \sigma^2(t)\log t = 0.
\end{equation}
Moreover, Chan and Williams also proved, if $t\mapsto\sigma^2(t)$ is
decreasing to zero, that if the solution $X$ of \eqref{eq.sde} obeys
\eqref{eq.stochglobalstable}, then $\sigma$ must obey
\eqref{eq.sigmalogto0}. These results were extended to
finite--dimensions by Chan in~\cite{Chan:1989}. The results
in~\cite{ChanWill:1989,Chan:1989} are motivated by problems in
simulated annealing.

In Appleby, Gleeson and Rodkina~\cite{JAJGAR:2009}, the monotonicity
condition on $f$ and \eqref{eq.ftoinfty} were relaxed. It was shown
if $f$ is locally Lipschitz continuous and obeys \eqref{eq.fglobalunperturbed},
and in place of \eqref{eq.ftoinfty} also obeys
\begin{equation} \label{eq.fliminfinfty}
\text{There exists $\phi>0$ such that }\phi:=\liminf_{|x|\to\infty}
|f(x)|,
\end{equation}
then any solution $X$ of \eqref{eq.sde} obeys
\eqref{eq.stochglobalstable} holds if $\sigma$ obeys
\eqref{eq.sigmalogto0}. The converse of Chan and Williams is also
established: if $t\mapsto\sigma^2(t)$ is decreasing, and the solution $X$ of \eqref{eq.sde} obeys \eqref{eq.stochglobalstable},
then $\sigma$ must obey \eqref{eq.sigmalogto0}. Moreover, it was
also shown, without monotonicity on $\sigma$, that if
\begin{equation} \label{eq.sigmalogtoinfty}
\lim_{t\to\infty} \sigma^2(t)\log t = +\infty,
\end{equation}
then the solution $X$ of \eqref{eq.sde} obeys
\begin{equation} \label{eq.stochunstable}
\limsup_{t\to\infty} |X(t,\xi)|=+\infty, \quad \text{a.s. for each
$\xi\in\mathbb{R}$}.
\end{equation}
Furthermore, it was shown that the condition \eqref{eq.sigmalogto0}
could be replaced by the weaker condition
\begin{equation} \label{eq.capSigma0}
\lim_{t\to\infty} \int_0^t e^{-2(t-s)}\sigma^2(s)\,ds \cdot \log_2
\int_0^t \sigma^2(s)e^{2s}\,ds = 0
\end{equation}
and that \eqref{eq.capSigma0} and \eqref{eq.sigmalogto0} are
equivalent when $t\mapsto \sigma^2(t)$ is decreasing. In fact, it
was even shown that if $\sigma^2$ is not monotone decreasing,
$\sigma$ does not have to satisfy \eqref{eq.sigmalogto0} in order
for $X$ to obey \eqref{eq.stochglobalstable}.

In this paper, we improve upon the results in \cite{JAJGAR:2009} and \cite{ChanWill:1989,Chan:1989} in a number of directions. First, we show that neither the Lipschitz continuity of $f$ nor the condition \eqref{eq.fliminfinfty} is needed in order to guarantee that any solution $X$ of \eqref{eq.sde} obeys \eqref{eq.stochglobalstable}. Moreover, we give
necessary and sufficient conditions for the convergence of solutions
which do not require the monotonicity of $\sigma^2$. One of our main
results shows that if $f$ obeys \eqref{eq.fglobalunperturbed}
and $\sigma$ is also continuous, then any solution $X$ of \eqref{eq.sde} obeys \eqref{eq.stochglobalstable} if and only if
\begin{equation} \label{eq.sigmaiffXto0}
S'(\epsilon):= \sum_{n=0}^\infty \sqrt{\int_n^{n+1}\sigma^2(s)\,ds}
\exp\left(-\frac{\epsilon^2}{2}\frac{1}{\int_n^{n+1}\sigma^2(s)\,ds}\right)
<+\infty,\quad \text{for every $\epsilon>0$},
\end{equation}
and it is even shown that if \eqref{eq.sigmaiffXto0} does not hold,
then $\mathbb{P}[X(t)\to 0 \text{ as $t\to\infty$}]=0$ for any
$\xi\in\mathbb{R}$ (Theorem~\ref{theorem.Xiffsigma}). Another significant development from \cite{JAJGAR:2009} and \cite{ChanWill:1989,Chan:1989} is a complete classification of the asymptotic behaviour of \eqref{eq.sde} in terms of
the data, rather than merely satisfactory sufficient conditions. In Theorem~\ref{theorem.Xbounded}, we show that when $f$
obeys \eqref{eq.ftoinfty}, that any solution is either (a) convergent to zero
with probability one, (b) bounded but not convergent to zero, with
probability one, or (c) recurrent on $\mathbb{R}$ with probability
one, according as to whether $S'(\epsilon)$ is always finite,
sometimes finite, or never finite, for $\epsilon>0$. Apart from classifying the asymptotic behaviour, the
novel feature here is that bounded but non--convergent solutions are examined.

Although the condition \eqref{eq.sigmaiffXto0} is necessary and
sufficient for $X$ to obey \eqref{eq.stochglobalstable}, it may
prove to be a little unwieldy for use in some situations. For this
reason we deduce some sharp sufficient conditions for $X$ to obey
\eqref{eq.stochglobalstable}. If $f$ obeys
\eqref{eq.fglobalunperturbed} 
and $\sigma$ is continuous and obeys \eqref{eq.capSigma0}, then any solution $X$ of \eqref{eq.sde} obeys \eqref{eq.stochglobalstable} (Theorem~\ref{theorem.sufficientforx}). In the spirit of
Theorem~\ref{theorem.Xiffsigma}, we also establish converse results
in the case when $\sigma^2$ is monotone
(Theorem~\ref{theorem.eqvt}), and demonstrate that the condition
\eqref{eq.capSigma0} is hard to relax if we require $X$ to obey
\eqref{eq.stochglobalstable}. The relationship between the
conditions which characterise the asymptotic behaviour, and which
involve $S'(\epsilon)$, and sufficient conditions are explored in
several results, notably in Proposition~\ref{prop.sigmasqlogtS}
and~\ref{prop.thetaSigma}. Also, in the case when solutions are
bounded, we analyse the relationships between the deterministic
bounds on solutions and the drift and diffusion coefficients. In
particular, in Propositions~\ref{prop.underline},\ref{prop.upperboundX}
and~\ref{prop.overline}, we demonstrate the bounds on any solution
increase with greater noise intensity, and with weaker mean
reversion.

These results are proven by showing that the stability of
\eqref{eq.sde} is equivalent to the asymptotic stability of a
process $Y$ which is the solution of an affine SDE with the same
diffusion coefficient $\sigma$ (Proposition~\ref{prop.yimpliesx}, especially part (A)). A
classification of the asymptotic behaviour $Y$ has already been
achieved in \cite{AppCheRod:2010a, AppCheRod:dres}, and the relevant results of
\cite{AppCheRod:2010a} are listed in Section~\ref{sec.linear}. The proof of part (a) of Proposition~\ref{prop.yimpliesx} is given under the additional condition that $\sigma\not \in L^2(0,\infty)$; the
case when $\sigma\in L^2(0,\infty)$ is easier, uses different
methods, and is dealt with separately in Theorem~\ref{th.sigL2}.
Essentially, in the case when $\sigma\not\in L^2(0,\infty)$ the
recurrence of one--dimensional standard Brownian motion forces solutions
to return to an arbitrarily small neighbourhood of the
origin infinitely often. Then, if the noise fades sufficiently
quickly so that the affine SDE is convergent to zero, the difference
$Z:=X-Y$ obeys a perturbed version of the ordinary differential
equation \eqref{eq.unperturbed} where the perturbation fades to zero
asymptotically, and by virtue of the recurrence property, there
exist arbitrarily large time when $Z$ is arbitrarily close to zero.
By considering an initial value problem for $Z$ starting at such
times, deterministic methods can then be used to show that $Z$ tends
to zero, and hence that $X$ tends to zero. A similar method is
employed in Theorem~\ref{theorem.Xbounded} to establish an upper
bound on $|X|$ when $Y$ is bounded, but does not tend to zero.
Establishing that solutions of \eqref{eq.sde} is unbounded, or
obeys certain lower bounds, is generally achieved by writing a
variation of constants formula for $X$ in terms on $Y$, and then
using the known asymptotic behaviour of $Y$ to force a
contradiction.

Although many parts of the analysis in this paper apply to
finite--dimensional equations, we do not pursue this question here.
The major reason for doing has already been mentioned above. An
important ingredient in establishing the asymptotic stability in the
case when $\sigma\not \in L^2(0,\infty)$ is the fact that process
$M$ defined by $M(t)=\int_0^t \sigma(s)\,dB(s)$ can be considered as
a one--dimensional Brownian motion on $[0,\infty)$, and therefore
returns to the origin infinitely often. For the one--dimensional
SDE, this causes the solution to return to an arbitrarily small
neighbourhood of the equilibrium infinitely often, and once other
stochastic terms fade, ensures convergence to the equilibrium.

Once we turn to consider higher dimensional equations, the
corresponding stochastic process $M(t)=\int_0^t \sigma(s)\,dB(s)$ in
higher dimensions may start to inherit some properties of
finite--dimensional Brownian motion, for dimension greater than or
equal to three. However, this means that $M$ can be transient,
obeying \[\lim_{t\to \infty} |M(t)|=+\infty, \quad\text{a.s.}\] In
this situation, therefore, one can no longer use the scalar method
of proof to show that the solution of the finite dimensional SDE
returns to an arbitrarily small neighbourhood of the equilibrium
infinitely often. At this moment, it is unclear to the authors
whether this is merely a technical problem, or if it presages
different asymptotic behaviour, with solutions losing stability more
readily than in the scalar case.

Other interesting questions which can be attacked by means of the
methods in this paper include an analysis of local stability, where
there are a finite number of equilibria of the underlying
deterministic dynamical system \eqref{eq.unperturbed}. Some work in
this direction has been conducted in a discrete--time setting in
\cite{JAGBAR:2008}. Numerical methods under monotonicity methods
have been studied in~\cite{JAJCAR:2010}. We expect that the sharper
information on the asymptotic behaviour of the linear SDE will lead
to improved results for the corresponding nonlinear equations.

Section 2 deals with preliminary results, including the proof that
solutions of \eqref{eq.sde} exist. Results for an auxiliary affine
SDE, proven in \cite{JAJCAR:2010}, are recapitulated in Section 3,
along with some new results for the
stability of affine equations. 
Section 4 considers general results, including the classification of
the almost sure behaviour of solutions under the additional
assumption \eqref{eq.fliminfinfty} on $f$. Section 5 considers the
characterisation of asymptotic stability using only the assumption
\eqref{eq.fglobalunperturbed}. Proofs of many results are deferred
to the end of the paper, and these proofs are presented in Sections
\ref{sec.proofsexist}, \ref{sec.prelimproof}, \ref{sec.proofs1},
\ref{sec:s9} and \ref{sec:s10}.

\section{Preliminaries}

\subsection{Notation}
In advance of stating and discussing our main results, we introduce
some standard notation. We denote the maximum of the real numbers
$x$ and $y$ by $x\vee y$ and the minimum of $x$ and $y$ by $x\wedge
y$. Let $C(I;J)$ denote the space of continuous functions $f:I\to J$
where $I$ and $J$ are intervals contained in $\mathbb{R}$. We denote
by $L^1(0,\infty)$ the space of Lebesgue integrable functions
$f:[0,\infty)\to\mathbb{R}$ such that $\int_0^\infty |f(s)|\,ds < +
\infty$.

\subsection{Remarks on existence and uniqueness of solutions of
\eqref{eq.sde}} \label{sec:exist} There is an extensive theory
regarding the existence and uniqueness of solutions of stochastic
differential equations under a variety of regularity conditions on
the drift and diffusion coefficients. Perhaps the most commonly
quoted conditions which ensure the existence of a strong local
solution are the Lipschitz continuity of the drift and diffusion
coefficients. However, in this paper, we would like to establish our
asymptotic results under weaker hypotheses on $f$. We do not concern
ourselves greatly with relaxing conditions on $\sigma$, because
$\sigma$ being continuous proves sufficient to ensure the existence
of solutions in many cases.

The existence of a unique solution of
\begin{equation}
\label{eq.sde2} dX(t)=f(X(t))\,dt + \sigma(t,X(t))\,dB(t)
\end{equation}
can be asserted in the case when $|\sigma(t,x)|\geq c>0$ for some
$c>0$ for all $(t,x)$ and $f$ being bounded, so no continuity
assumption is required on $f$. However, assuming such a lower bound
on $\sigma$ would not natural in the context of this paper: for
asymptotic stability results, we would typically require that
$\liminf_{t\to\infty} \sigma^2(t)=0$. Moreover, $f$ being bounded
excludes the important category of strongly mean--reverting
functions $f$ that have been investigated for this stability problem
in \cite{ChanWill:1989} and \cite{JAJGAR:2009}.

One of the aims of this paper and of \cite{JAJGAR:2009} is to relax
monotonicity assumptions on $\sigma$ which are required in
\cite{ChanWill:1989}. Therefore, although we are often interested in
functions $\sigma$ which tend to zero in some sense, we do not want
to exclude the cases when $\sigma(t)=0$ for all $t$ in a given
interval (or indeed union of intervals). Our analysis will show that
in these cases, the behaviour of $\sigma$ on the intervals where it
is nontrivial can give rise to solutions of \eqref{eq.sde} obeying
\eqref{eq.stochglobalstable} or \eqref{eq.stochunstable}. However,
on those time intervals $I$ for which $\sigma$ is zero, the process
$X$ obeys the differential equation
\[
X'(t)=-f(X(t)), \quad t\in I
\]
where $X(\inf I)$ is a random variable. On such an interval, it is
conceivable that a lack of regularity in $f$ could give rise to
multiple solutions of the ordinary equation (and hence the SDE
\eqref{eq.sde}), so our most general existence results which make
assertions about the existence of solutions (but say nothing about
unicity of solutions), and which use the weakest hypotheses on $f$
that we impose in this work, do not appear to be especially
conservative.

For these reasons, we prove that there is a continuous and adapted
process which obeys \eqref{eq.sde} by using very elementary methods,
rather than by appealing to a result from the substantial body of
sophisticated theory concerning the existence of solutions of
\eqref{eq.sde2}. Of course, these methods are of very limited
utility in establishing existence and uniqueness for more general
equations of the form \eqref{eq.sde2}: our method of proof works
because the diffusion coefficient is independent of the state. In
fact, our method of proof gives a weaker conclusion for the
existence of solutions of \eqref{eq.sde2} in the case when $f$ is
bounded and $\sigma(t,x)=\sigma(t)$ for all $(t,x)$ and
$|\sigma(t)|\geq c>0$. Our result states that there is a solution of
\eqref{eq.sde} (or \eqref{eq.sde2}) while existing results guarantee the existence of a unique solution under the assumption 
that $f$ be continuous. Despite these
general limitations, however, our proof does ensure existence of
solutions for all the problems that are of concern in this paper,
while existing results cannot always be applied without the
imposition of additional hypotheses.

When $f$ obeys \eqref{eq.fglobalunperturbed} and $\sigma$ obeys
\eqref{eq.sigcns}, we now demonstrate that there exists a continuous
and adapted process $X$ which satisfies \eqref{eq.sde}. The
existence of a local solution is ensured by the continuity of $f$
and $\sigma$, while the fact that any such solution is well--defined
for all time follows from the mean--reverting condition $xf(x)>0$
for $x\neq 0$ which is part of \eqref{eq.fglobalunperturbed}. In the
paper, the spirit of our approach is to show that \emph{any}
solution of \eqref{eq.sde} has the stated asymptotic properties,
even though multiple solutions exist, without paying particular
concern as to whether solutions are unique.


\begin{proposition}  \label{prop.exist}
Suppose that $f$ obeys \eqref{eq.fglobalunperturbed} and $\sigma$ obeys \eqref{eq.sigcns}. Then there exists a
continuous adapted process $X$ which obeys \eqref{eq.sde} on $[0,\infty)$, a.s.
\end{proposition}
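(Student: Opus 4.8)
The plan is to remove the stochastic integral by a pathwise substitution, solve a random ordinary differential equation via Peano's theorem, and use the mean--reverting structure to rule out finite--time explosion. First I would set $M(t):=\int_0^t \sigma(s)\,dB(s)$. Since $\sigma$ is continuous by \eqref{eq.sigcns}, it is locally bounded, so $\int_0^t \sigma^2(s)\,ds<\infty$ for every $t$; hence $M$ is a well--defined continuous, adapted (local) martingale and almost every sample path $t\mapsto M(t,\omega)$ is continuous. Writing $Y:=X-M$, a process $X$ solves \eqref{eq.sde} in integral form if and only if $Y$ solves the pathwise integral equation $Y(t)=\xi-\int_0^t f(Y(s)+M(s))\,ds$, i.e. the random ODE $Y'(t)=-f(Y(t)+M(t))$ with $Y(0)=\xi$. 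Because $f$ is continuous and $M(\cdot,\omega)$ is continuous, the right--hand side $(t,y)\mapsto -f(y+M(t,\omega))$ is jointly continuous, so for almost every fixed $\omega$ Peano's existence theorem yields a local solution on a maximal interval $[0,\tau(\omega))$.

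Next I would show $\tau(\omega)=\infty$ using the condition $xf(x)>0$ for $x\neq 0$ from \eqref{eq.fglobalunperturbed}. Fix $T>0$ and, on the event that $M(\cdot,\omega)$ is continuous, set $K:=\sup_{0\le t\le T}|M(t,\omega)|<\infty$. Along the solution, $\frac{d}{dt}\tfrac12 Y(t)^2=-Y(t)f(Y(t)+M(t))$, and whenever $|Y(t)|>K$ the numbers $Y(t)$ and $Y(t)+M(t)$ share the same nonzero sign, so $Y(t)f(Y(t)+M(t))>0$ and hence $t\mapsto Y(t)^2$ is strictly decreasing there. A short comparison argument then gives the a priori bound $|Y(t)|\le |\xi|\vee K$ for all $t\in[0,T]\cap[0,\tau(\omega))$, which precludes blow--up before $T$; as $T$ is arbitrary, the solution extends to $[0,\infty)$. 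Setting $X:=Y+M$ produces a continuous process satisfying $X(t)=\xi-\int_0^t f(X(s))\,ds+M(t)$, which is exactly \eqref{eq.sde}.

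The main obstacle is adaptedness, since Peano's theorem provides no canonical, let alone measurable, choice of solution. I would address this by replacing the abstract Peano step with an explicit, manifestly adapted construction: on a mesh of width $1/n$ define the explicit Euler polygons $Y_n$ by freezing the drift at the left mesh points, $Y_n(t)=Y_n(t_k)-\int_{t_k}^t f(Y_n(t_k)+M(s))\,ds$ on $[t_k,t_{k+1}]$, so that each $Y_n$ is continuous and adapted. The same energy estimate gives the uniform bound $|Y_n(t)|\le|\xi|\vee K$ on $[0,T]$, and since $f$ is then evaluated only on a fixed compact set the increments are controlled by the modulus of continuity of $M$; thus $\{Y_n(\cdot,\omega)\}$ is equicontinuous and relatively compact in $C([0,T])$ by Arzel\`a--Ascoli, and any subsequential limit solves the integral equation. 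To obtain a single \emph{adapted} limit I would exploit causality: the solution on $[0,t]$ depends on the data only through $M|_{[0,t]}$, so a measurable selection (Kuratowski--Ryll--Nardzewski) of a solution, made measurably with respect to the path of $M$, respects the filtration and yields an adapted $Y$, whence $X=Y+M$ is continuous and adapted. This measurable, adapted selection is the delicate point; the local existence and global--extension steps are routine once the state--independence of $\sigma$ and the mean--reverting hypothesis are used.
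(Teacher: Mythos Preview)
Your approach is essentially the same as the paper's: remove the stochastic integral by subtracting a known adapted process, reduce to a random ODE with continuous right--hand side, invoke Peano for local existence, and use the sign condition $xf(x)>0$ to preclude finite--time explosion. The only structural difference is that you subtract the raw martingale $M(t)=\int_0^t\sigma(s)\,dB(s)$, whereas the paper subtracts the Ornstein--Uhlenbeck process $Y$ solving $dY=-Y\,dt+\sigma\,dB$, $Y(0)=0$, and works with $z=X-Y$ satisfying $z'=-f(z+Y)+Y$; the paper's choice is motivated by the later asymptotic analysis, but for existence either substitution works and yours is marginally simpler. On adaptedness you are in fact more careful than the paper, which simply asserts that because $Y$ is adapted, the Peano solution $z$ is adapted---a claim that, as you correctly identify, is delicate in the absence of uniqueness; your route through explicit Euler polygons and a measurable selection is a legitimate way to close this gap. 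One small point: the energy estimate $|Y_n(t)|\le|\xi|\vee K$ for the Euler scheme needs a little care, since $\tfrac{d}{dt}\tfrac12 Y_n(t)^2=-Y_n(t)\,f(Y_n(t_k)+M(t))$ involves $Y_n$ at two different times; a discrete bound at mesh points followed by control of the increments within cells is the cleanest fix.
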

The proof is postponed to Section~\ref{sec.proofsexist}. In order to
ensure that solutions of \eqref{eq.sde} are unique, it is often
necessary to impose additional regularity properties on $f$. One
common and mild assumption which ensures uniqueness is that
\begin{equation} \label{eq.floclip}
\text{$f$ is locally Lipschitz continuous}.
\end{equation}
See e.g., \cite{Mao1}. Another assumption which guarantees the
uniqueness of the solution is that the drift coefficient $-f$ obeys
a one--sided Lipschitz condition. More precisely, imposing such an
assumption on $f$ implies
\begin{equation} \label{eq.fonesidedlip}
\text{There exists $K\in \mathbb{R}$ such that }
(f(x)-f(y))(x-y)\geq -K|x-y|^2 \quad \text{for all
$x,y\in\mathbb{R}$}.
\end{equation}
 It is to be noted that if $f$ is non--decreasing, it obeys \eqref{eq.fonesidedlip}, because the righthand side is non--negative,
 and we can choose $K=0$. Since non--decreasing functions do not have to be Lipschitz continuous, we see that in general \eqref{eq.fonesidedlip} does not imply \eqref{eq.floclip}, so these additional assumptions can be used to cover different situations.
\begin{proposition}\label{prop.existonesided}
Suppose that $f$ obeys \eqref{eq.fglobalunperturbed} and
\eqref{eq.fonesidedlip} and that $\sigma$ obeys \eqref{eq.sigcns}.
Then there exists a unique continuous adapted process $X$ which
obeys \eqref{eq.sde} on $[0,\infty)$ a.s.
\end{proposition}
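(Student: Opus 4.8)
The plan is to dispense with existence immediately by invoking Proposition~\ref{prop.exist}, since the hypotheses \eqref{eq.fglobalunperturbed} and \eqref{eq.sigcns} are already in force; the entire content of the proposition is therefore the uniqueness assertion, and \eqref{eq.fonesidedlip} is precisely the extra ingredient that delivers it. The decisive structural feature to exploit is that the diffusion coefficient $\sigma(t)$ is independent of the state. Thus, if $X_1$ and $X_2$ are two continuous adapted processes that both satisfy \eqref{eq.sde} on $[0,\infty)$ with the same initial value $\xi$ and are driven by the same Brownian motion $B$, then on a common almost-sure event the stochastic integrals $\int_0^t \sigma(s)\,dB(s)$ appearing in each of the two integral representations are literally identical and cancel. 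Consequently the difference $Z:=X_1-X_2$ satisfies, path by path,
\begin{equation*}
Z(t)=-\int_0^t \bigl(f(X_1(s))-f(X_2(s))\bigr)\,ds, \qquad t\geq 0,
\end{equation*}
with $Z(0)=0$. This is the linchpin: the martingale part has disappeared and we are left with a purely deterministic relation.

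First I would note that, because $X_1$ and $X_2$ are continuous and $f$ is continuous by \eqref{eq.fglobalunperturbed}, the integrand $s\mapsto f(X_1(s))-f(X_2(s))$ is continuous, so $Z$ is continuously differentiable with $Z'(t)=-(f(X_1(t))-f(X_2(t)))$. I would then differentiate $t\mapsto \tfrac12 Z(t)^2$ to obtain
\begin{equation*}
\frac{d}{dt}\,\tfrac12 Z(t)^2 = -\bigl(X_1(t)-X_2(t)\bigr)\bigl(f(X_1(t))-f(X_2(t))\bigr).
\end{equation*}
The one-sided Lipschitz condition \eqref{eq.fonesidedlip} gives $(f(x)-f(y))(x-y)\geq -K\abs{x-y}^2$, hence the right-hand side is bounded above by $K Z(t)^2$, yielding the differential inequality $\frac{d}{dt} Z(t)^2 \leq 2K Z(t)^2$. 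An application of Gronwall's inequality, together with $Z(0)=0$, forces $Z(t)^2\leq Z(0)^2 e^{2Kt}=0$ for all $t\geq 0$, so $X_1\equiv X_2$ on $[0,\infty)$ almost surely.

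I do not anticipate a genuine obstacle here; the proposition is easy precisely because state-independence of $\sigma$ reduces a stochastic uniqueness question to a deterministic comparison amenable to Gronwall. The only points requiring a little care are bookkeeping ones: ensuring that the cancellation of the stochastic integrals and the resulting integral identity for $Z$ hold simultaneously on a single event of probability one (so that the subsequent pathwise calculus is legitimate for a.e. $\omega$), and observing that no sign or integrability issue arises, since the constant $K$ in \eqref{eq.fonesidedlip} may be of either sign and the Gronwall estimate is valid regardless. It is worth flagging that this argument is robust to the lack of Lipschitz continuity of $f$: we never need a two-sided bound, only the one-sided estimate evaluated along the actual trajectories, which is exactly why \eqref{eq.fonesidedlip} --- satisfied in particular by every non-decreasing $f$ --- suffices.
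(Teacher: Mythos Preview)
Your proposal is correct and follows essentially the same approach as the paper: invoke Proposition~\ref{prop.exist} for existence, observe that the state-independent diffusion makes the stochastic integrals cancel so that $Z=X_1-X_2$ has finite variation, then combine the one-sided Lipschitz condition with Gronwall's inequality to force $Z\equiv 0$. The only cosmetic differences are that the paper phrases the computation via It\^o's rule applied to $(X_1-X_2)^2$ rather than ordinary differentiation of $\tfrac12 Z^2$, and it splits off the trivial case $K\leq 0$ before invoking Gronwall.
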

Again the proof is deferred to Section~\ref{sec.proofsexist}.

In the proof of Proposition~\ref{prop.exist}, and elsewhere
throughout the paper, it is helpful to introduce the following
processes and notation. Consider the affine stochastic differential
equation
\begin{equation} \label{eq.linsde}
dY(t)=-Y(t)\,dt+\sigma(t)\,dB(t),\quad t\geq 0; \quad Y(0)=0.
\end{equation}
Since $\sigma$ is continuous, there is a unique continuous adapted
process which obeys \eqref{eq.linsde}, and we identify the a.s.
event $\Omega_Y$ on which this solution is defined:
\begin{multline}
\Omega_Y=\big\{\omega\in \Omega: \text{there is a unique continuous adapted process $Y$} \\
\label{def.OmegaY}
 \text{for which the realisation $Y(\cdot,\omega)$ obeys \eqref{eq.linsde}}\bigr\}.
\end{multline}
It is also helpful throughout the paper to identify the event
$\Omega_X$ on which the continuous adapted process $X$ obeys
\eqref{eq.sde}, so we therefore define
\begin{multline}  \label{def.OmegaX}
\Omega_X=\bigl\{\omega\in \Omega: \text{the continuous adapted process $X$} \\
\text{is such that the realisation $X(\cdot,\omega)$ obeys \eqref{eq.sde}}\bigr\}.
\end{multline}
By virtue of Proposition~\ref{prop.exist}, $\Omega_X$ is an almost sure event.

\subsection{Preliminary asymptotic results}
We first consider hypotheses on the data i.e., on $\sigma$ under
which any solution $X$ of \eqref{eq.sde} obeys
\eqref{eq.stochglobalstable}. We note that when $\sigma\in
L^2(0,\infty)$, we have $X$ obeys \eqref{eq.stochglobalstable}.
However, we cannot apply directly the semimartingale convergence
theorem of Lipster--Shiryaev (see e.g., \cite[Theorem 7,
p.139]{LipShir:1989}) to the non--negative semimartingale $X^2$,
because it is not guaranteed that $\mathbb{E}[X^2(t)]<+\infty$ for
all $t\geq 0$. The proof of the following theorem, which is deferred
to the next section, uses the ideas of \cite[Theorem 7,
p.139]{LipShir:1989} heavily, however.
\begin{theorem} \label{th.sigL2}
Suppose that $f$ satisfies \eqref{eq.fglobalunperturbed}.
Suppose that $\sigma$ obeys \eqref{eq.sigcns}
and $\sigma\in L^2(0,\infty)$. If $X$ is any solution of
\eqref{eq.sde}, then $X$ obeys \eqref{eq.stochglobalstable}.
\end{theorem}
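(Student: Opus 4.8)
The plan is to apply Itô's formula to $X^2$ and read off an almost sure convergence statement from the resulting decomposition, following the strategy behind the semimartingale convergence theorem of \cite[Theorem 7, p.139]{LipShir:1989} but re-deriving the relevant conclusion by hand so as to circumvent the possible failure of $\mathbb{E}[X^2(t)]<+\infty$. Writing $M(t):=2\int_0^t X(s)\sigma(s)\,dB(s)$, Itô's formula applied to any solution of \eqref{eq.sde} gives, for all $t\ge 0$,
\begin{equation*}
X^2(t)+2\int_0^t X(s)f(X(s))\,ds=\xi^2+\int_0^t\sigma^2(s)\,ds+M(t).
\end{equation*}
Here $t\mapsto\int_0^t\sigma^2(s)\,ds$ is non-decreasing and bounded above by $\|\sigma\|_{L^2(0,\infty)}^2<+\infty$ (using $\sigma\in L^2(0,\infty)$), the term $2\int_0^t X f(X)\,ds$ is non-decreasing because $xf(x)>0$ for $x\ne 0$ by \eqref{eq.fglobalunperturbed}, and $M$ is a continuous local martingale with $\langle M\rangle(t)=4\int_0^t X^2(s)\sigma^2(s)\,ds$. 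I would aim to show that $X^2(t)$ tends to a finite limit $L\ge 0$ and that $\int_0^\infty X(s)f(X(s))\,ds<+\infty$, both almost surely, and then to prove $L=0$.

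The analysis splits according to the value of $\langle M\rangle(\infty)$. On the event $\{\langle M\rangle(\infty)<+\infty\}$ the local martingale $M$ converges to a finite limit almost surely; since the right-hand side above then converges while the two terms on the left are respectively non-negative and non-decreasing, both must converge to finite limits, delivering $X^2(t)\to L$ and $\int_0^\infty Xf(X)\,ds<+\infty$. The main obstacle, and the step requiring the ideas of \cite{LipShir:1989}, is to show that the complementary event $\{\langle M\rangle(\infty)=+\infty\}$ is null. On this event the strong law of large numbers for continuous local martingales gives $M(t)/\langle M\rangle(t)\to 0$, i.e. $M(t)=o\!\left(\int_0^t X^2\sigma^2\,ds\right)$. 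Dropping the non-negative term $2\int_0^t Xf(X)\,ds$ yields $X^2(t)\le \xi^2+\|\sigma\|_{L^2(0,\infty)}^2+M(t)$; substituting this into $\langle M\rangle(t)/4=\int_0^t X^2\sigma^2\,ds$, using $\sigma\in L^2(0,\infty)$ and the monotonicity of $t\mapsto\int_0^t X^2\sigma^2\,ds$, I would obtain a Grönwall-type inequality of the form $\int_0^t X^2\sigma^2\,ds\le C+\varepsilon\,\|\sigma\|_{L^2(0,\infty)}^2\int_0^t X^2\sigma^2\,ds$ valid for all large $t$, with $\varepsilon$ as small as we please. Choosing $\varepsilon$ so that $\varepsilon\|\sigma\|_{L^2(0,\infty)}^2<1$ then forces $\int_0^t X^2\sigma^2\,ds$ to be bounded, contradicting $\langle M\rangle(\infty)=+\infty$. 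The global existence of $X$ from Proposition~\ref{prop.exist} guarantees that the decomposition holds on all of $[0,\infty)$, so no finite-time explosion complicates this localisation.

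It remains to identify the limit $L$, and the plan is to argue by contradiction. Suppose that $L>0$ on an event of positive probability. Then $|X(t)|\to\sqrt{L}$, so for all large $t$ one has $\sqrt{L}/2\le |X(t)|\le 2\sqrt{L}$. On the compact annulus $\{\sqrt{L}/2\le |x|\le 2\sqrt{L}\}$ the function $x\mapsto xf(x)$ is continuous and strictly positive by \eqref{eq.fglobalunperturbed}, hence bounded below by some $\delta>0$; consequently $\int_0^\infty X(s)f(X(s))\,ds\ge\int_T^\infty\delta\,ds=+\infty$, contradicting the integrability established above. Therefore $L=0$ almost surely, which is precisely \eqref{eq.stochglobalstable}. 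The only genuinely delicate point is the Grönwall step ruling out $\langle M\rangle(\infty)=+\infty$; everything else follows either from the sign structure of the decomposition or from an elementary compactness argument.
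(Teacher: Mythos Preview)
Your proof is correct and follows the same overall architecture as the paper's: apply It\^o's formula to $X^2$, control the local martingale $M$ to deduce that $X^2(t)\to L$ and $\int_0^\infty Xf(X)\,ds<\infty$ a.s., then argue $L=0$. The two implementations differ in detail, however, and it is worth noting where.

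For ruling out $\{\langle M\rangle(\infty)=+\infty\}$, the paper's argument is considerably shorter than yours. Rather than invoking the strong law for local martingales and running a Gr\"onwall-type bootstrap on $\int_0^t X^2\sigma^2\,ds$, the paper simply observes that on this event $\liminf_{t\to\infty} M(t)=-\infty$ (since $M$ is a time-changed Brownian motion run to infinite time), and then the inequality $X^2(t)\le \xi^2+\|\sigma\|_{L^2}^2+M(t)$ forces $\liminf_{t\to\infty} X^2(t)=-\infty$, an immediate absurdity. Your Gr\"onwall route is valid but does more work than necessary.

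Conversely, your identification of the limit $L=0$ is a touch cleaner than the paper's. You work directly with $|X(t)|\to\sqrt{L}$ and use compactness of the annulus $\{\sqrt{L}/2\le |x|\le 2\sqrt{L}\}$ to bound $xf(x)\ge\delta>0$, yielding $\int_0^\infty Xf(X)\,ds=+\infty$ in one stroke. The paper instead splits into the events $\{X(t)\to\sqrt{L}\}$ and $\{X(t)\to -\sqrt{L}\}$ (using continuity of $X$), computes the Ces\`aro limit $\frac{1}{t}\int_0^t Xf(X)\,ds\to\sqrt{L}f(\sqrt{L})>0$, and divides the It\^o identity by $t$ to reach a sign contradiction. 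Both arguments exploit the same mechanism---the drift term grows linearly if $L>0$---but yours avoids the case split.
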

The proof is relegated to Section~\ref{sec.sigmal2proof}. Our next
result shows that if, on the contrary, $\sigma\not\in
L^2(0,\infty)$, we can only guarantee that $X$ visits a
neighbourhood of the equilibrium infinitely often.
\begin{theorem} \label{lemma.Xliminf}
Suppose that $f$ obeys \eqref{eq.fglobalunperturbed},
and that $\sigma$ obeys \eqref{eq.sigcns} and
$\sigma\not\in L^2(0,\infty)$. Then any solution $X$ of
\eqref{eq.sde} obeys $\liminf_{t\to\infty} |X(t)|=0$ a.s.
\end{theorem}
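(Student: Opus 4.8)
The plan is to fix an arbitrary solution $X$ and argue pathwise on the almost sure event $\Omega_X$, where the integral form
\[
X(t) = \xi - \int_0^t f(X(s))\,ds + M(t), \qquad M(t):=\int_0^t \sigma(s)\,dB(s),
\]
holds for all $t\geq 0$. Since $\sigma$ is continuous it is locally bounded, so $M$ is a continuous local martingale with quadratic variation $\langle M\rangle_t = \int_0^t \sigma^2(s)\,ds$; this is finite for each finite $t$, while the hypothesis $\sigma\not\in L^2(0,\infty)$ forces $\langle M\rangle_\infty = +\infty$ a.s. The strategy is to show that the complementary event $\{\liminf_{t\to\infty}|X(t)|>0\}$ is null.

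The first step supplies the only genuinely probabilistic ingredient: the recurrence of the noise. I would invoke the time--change (Dambis--Dubins--Schwarz) representation $M(t)=W(\langle M\rangle_t)$ for a standard Brownian motion $W$, together with $\langle M\rangle_t\uparrow+\infty$, to conclude that $\limsup_{t\to\infty} M(t)=+\infty$ and $\liminf_{t\to\infty} M(t)=-\infty$ a.s.; this is the standard recurrence property of a continuous local martingale with infinite terminal quadratic variation. Let $N$ be the null set off which both relations hold.

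The second step is a deterministic contradiction exploiting the sign condition in \eqref{eq.fglobalunperturbed}. On $\{\liminf_{t\to\infty}|X(t)|>0\}$ each path is eventually bounded away from $0$ and, being continuous, is eventually of one sign; hence this event is contained in $\bigcup_{q\in\mathbb{Q}\cap(0,\infty)}\bigcup_{m\in\mathbb{N}}\bigl(A^+_{q,m}\cup A^-_{q,m}\bigr)$, where $A^+_{q,m}=\{X(t)\geq q\text{ for all }t\geq m\}$ and $A^-_{q,m}=\{X(t)\leq -q\text{ for all }t\geq m\}$. I will show each of these countably many events is null. On $A^+_{q,m}$ we have $X(s)\geq q>0$, so $f(X(s))>0$ for all $s\geq m$ by \eqref{eq.fglobalunperturbed}; consequently $\int_m^t f(X(s))\,ds\geq 0$, and therefore $X(t)\leq c(\omega)+M(t)$ for $t\geq m$, where $c(\omega):=\xi-\int_0^m f(X(s))\,ds$ is finite because $f(X(\cdot))$ is bounded on the compact interval $[0,m]$. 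Off $N$ we may choose $t_n\to\infty$ with $M(t_n)\to-\infty$, whence $X(t_n)\to-\infty$, contradicting $X(t_n)\geq q$; thus $A^+_{q,m}\subseteq N$ and $\mathbb{P}[A^+_{q,m}]=0$. The event $A^-_{q,m}$ is symmetric: there $f(X(s))<0$ for $s\geq m$ yields $X(t)\geq c'(\omega)+M(t)$, and $\limsup_{t\to\infty}M(t)=+\infty$ off $N$ forces $X(t_n)\to+\infty$, again a contradiction. Summing over the countable family gives $\mathbb{P}[\liminf_{t\to\infty}|X(t)|>0]=0$, which is the claim.

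I expect the main obstacle to be conceptual rather than computational: everything outside Step~1 is pathwise, and the crucial observation is that the mean--reversion sign condition $xf(x)>0$ makes the drift \emph{cooperative}. On $A^+_{q,m}$ the drift contribution $-\int_m^t f(X(s))\,ds$ is nonpositive, so it can only help push $X(t_n)$ below $q$ rather than oppose the excursion of $M$ toward $-\infty$; symmetrically on $A^-_{q,m}$. The points requiring care are therefore the reduction to the sign--definite events $A^\pm_{q,m}$ (using continuity to preclude sign changes once $|X|$ is bounded below) and the observation that the drift integral over $[m,t]$ carries a definite sign, which is exactly what allows the recurrence of $M$ to be transmitted to $X$.
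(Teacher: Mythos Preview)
Your proof is correct and follows essentially the same route as the paper: both exploit that $M(t)=\int_0^t\sigma(s)\,dB(s)$ has $\liminf M=-\infty$ and $\limsup M=+\infty$ a.s.\ when $\sigma\notin L^2$, observe that on the event where $X$ is eventually of one sign the drift term $-\int f(X(s))\,ds$ has a definite sign by \eqref{eq.fglobalunperturbed}, and thereby bound $X$ above (or below) by a finite constant plus $M$ to reach a contradiction. The only cosmetic differences are that the paper works with the single event $\{\liminf_{t\to\infty}X(t)>0\}$ using a random threshold time $T_1(\omega)$ and then handles the negative case by the substitution $X_-(t)=-X(t)$, whereas you decompose into the countable family $A^\pm_{q,m}$ with deterministic $q,m$; and the paper establishes $\liminf X\le 0$ and $\limsup X\ge 0$ separately before combining, while you target $\{\liminf|X|>0\}$ directly.
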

Again the proof is postponed to Section~\ref{sec.sigmal2proof}.
\section{Linear Equation}  \label{sec.linear}
We start by recalling results concerning the asymptotic behaviour of
the related affine stochastic differential equation \eqref{eq.linsde}. These were
presented in Appleby, Cheng, and Rodkina~\cite{AppCheRod:2010a}.
There is a unique continuous adapted processes which obeys \eqref{eq.linsde}.
We note that $Y$ can be written in the form
\begin{equation} \label{eq.Yform}
Y(t)=e^{-t}\int_0^t e^s\sigma(s)\,dB(s), \quad t\geq 0.
\end{equation}
Let $\Phi:\mathbb{R}\to [0,1]$ be the distribution function of a
standard normal random variable, so that
\begin{equation}\label{def.phi}
\Phi(x)=\frac{1}{\sqrt{2\pi}}\int_{-\infty}^x
e^{-\frac{1}{2}u^2}\,du, \quad x\in\mathbb{R}.
\end{equation}
We interpret $\Phi(-\infty)=0$ and $\Phi(\infty)=1$. Define the
sequence $\theta:\mathbb{N}\to[0,\infty)$ such that
\begin{equation} \label{def.theta}
\theta^2(n)=\int_n^{n+1} \sigma^2(s)\,ds.
\end{equation}
Let $\epsilon>0$ and consider the sum
\begin{equation} \label{def.Sepsilon}
S(\epsilon)=\sum_{n=0}^\infty
\left\{1-\Phi\left(\frac{\epsilon}{\theta(n)}\right)\right\}.
\end{equation}
This summation is difficult to evaluate directly, because $\Phi$ is
not known in closed form. However, it can be shown that
$S(\epsilon)$ is finite or infinite according as to whether the sum
\begin{equation}\label{def.s'epsilon}
S'(\epsilon):=\sum_{n=0}^\infty
\theta(n)\exp\left(-\frac{\epsilon^2}{2}\frac{1}{\theta^2(n)}\right)
\end{equation}
is finite or infinite, where we interpret the summand to be zero in
the case where $\theta(n)=0$.
\begin{lemma} \label{lemma.sepss'eps}
$S(\epsilon)$ given by \eqref{def.Sepsilon} is finite if and only if
$S'(\epsilon)$ given by \eqref{def.s'epsilon} is finite.
\end{lemma}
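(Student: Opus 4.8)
The plan is to compare the two series term by term using the classical two--sided estimates for the Gaussian tail (the Mills ratio). Writing $x_n=\epsilon/\theta(n)$ for those indices with $\theta(n)>0$, the factor $\exp(-\epsilon^2/(2\theta^2(n)))$ appearing in $S'(\epsilon)$ is exactly $e^{-x_n^2/2}$, while $\theta(n)=\epsilon/x_n$; hence the summand of $S'(\epsilon)$ equals $\epsilon x_n^{-1}e^{-x_n^2/2}$, which is precisely the shape of the Gaussian tail. The convention that the summand of $S'(\epsilon)$ vanishes when $\theta(n)=0$ is consistent with the corresponding term of $S(\epsilon)$, since $1-\Phi(\epsilon/\theta(n))\to 1-\Phi(+\infty)=0$ as $\theta(n)\to 0^+$; so I may ignore the degenerate indices and assume $\theta(n)>0$ throughout.

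For the implication $S'(\epsilon)<\infty\Rightarrow S(\epsilon)<\infty$ I would use only the upper Mills bound, which holds for \emph{every} $x>0$: integrating $e^{-u^2/2}\le (u/x)e^{-u^2/2}$ over $u\ge x$ gives $1-\Phi(x)\le (x\sqrt{2\pi})^{-1}e^{-x^2/2}$. Substituting $x=x_n$ yields, term by term,
\[
1-\Phi\!\left(\frac{\epsilon}{\theta(n)}\right)\le \frac{\theta(n)}{\epsilon\sqrt{2\pi}}\exp\!\left(-\frac{\epsilon^2}{2\theta^2(n)}\right),
\]
so that $S(\epsilon)\le (\epsilon\sqrt{2\pi})^{-1}S'(\epsilon)$. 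This inequality is uniform in $n$ and imposes no restriction on the size of $\theta(n)$, so this half is immediate.

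The reverse implication $S(\epsilon)<\infty\Rightarrow S'(\epsilon)<\infty$ needs the lower Mills bound, which is only effective once the argument is bounded below. Hence I would first observe that convergence of $S(\epsilon)$ forces its summand to zero: since $1-\Phi$ is decreasing and strictly positive, if $\theta(n)\ge\delta>0$ held for infinitely many $n$ then $1-\Phi(\epsilon/\theta(n))\ge 1-\Phi(\epsilon/\delta)>0$ infinitely often, contradicting convergence. Thus $\theta(n)\to 0$, and so $x_n=\epsilon/\theta(n)\ge 1$ for all $n\ge N$, say. For such $n$ the standard estimate $1-\Phi(x)\ge \frac{x}{(1+x^2)\sqrt{2\pi}}e^{-x^2/2}\ge \frac{1}{2x\sqrt{2\pi}}e^{-x^2/2}$ (valid for $x\ge 1$, since $x/(1+x^2)\ge 1/(2x)$ there) gives
\[
1-\Phi\!\left(\frac{\epsilon}{\theta(n)}\right)\ge \frac{\theta(n)}{2\epsilon\sqrt{2\pi}}\exp\!\left(-\frac{\epsilon^2}{2\theta^2(n)}\right),\qquad n\ge N,
\]
whence $\sum_{n\ge N}\theta(n)\exp(-\epsilon^2/(2\theta^2(n)))\le 2\epsilon\sqrt{2\pi}\sum_{n\ge N}\{1-\Phi(\epsilon/\theta(n))\}<\infty$. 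Adding the finitely many terms with $n<N$ shows $S'(\epsilon)<\infty$.

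The only point requiring care is the regime where $\theta(n)$ does not tend to zero, in which the sharp asymptotic equivalence of the two summands fails; but this is harmless, since there both series diverge (their summands are bounded below along a subsequence), and the equivalence is needed only on the tail. Consequently the main obstacle is not analytic but bookkeeping: arranging the argument so that the uniform upper bound settles one direction outright, while the lower bound is invoked only after convergence of $S(\epsilon)$ has been used to guarantee $\theta(n)\to 0$.
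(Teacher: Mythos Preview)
Your argument is correct and rests on the same idea as the paper's proof---comparing the summands via the Mills ratio for the Gaussian tail---but the execution differs in a small and pleasant way. The paper invokes only the \emph{asymptotic} statement $\lim_{x\to\infty}(1-\Phi(x))\big/(x^{-1}e^{-x^2/2})=1/\sqrt{2\pi}$, and so must first argue in \emph{both} directions that $\theta(n)\to 0$ (equivalently $\epsilon/\theta(n)\to\infty$) before the limit comparison test applies; in particular, for the implication $S'(\epsilon)<\infty\Rightarrow S(\epsilon)<\infty$ the paper introduces the auxiliary function $\phi(x)=x e^{-1/(2x^2)}$ and uses its monotonicity to deduce $\theta(n)\to 0$. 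You instead use the non-asymptotic upper bound $1-\Phi(x)\le (x\sqrt{2\pi})^{-1}e^{-x^2/2}$, valid for every $x>0$, which dispatches that direction in one line with the global inequality $S(\epsilon)\le(\epsilon\sqrt{2\pi})^{-1}S'(\epsilon)$, bypassing any discussion of $\theta(n)\to 0$. For the converse direction your argument and the paper's are essentially identical: both first force $\theta(n)\to 0$ from convergence of $S(\epsilon)$ and then apply a lower Mills estimate (yours explicit, the paper's asymptotic) on the tail. So the route is the same; your version is marginally more economical.
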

\begin{proof}
We note by e.g., \cite[Problem 2.9.22]{K&S},
\begin{equation} \label{eq.millsasy}
\lim_{x\to\infty}\frac{1-\Phi(x)}{x^{-1}e^{-x^2/2}}=\frac{1}{\sqrt{2\pi}}.
\end{equation}
If $S(\epsilon)$ is finite, then $1-\Phi(\epsilon/\theta(n))\to 0$
as $n\to\infty$. This implies $\epsilon/\theta(n)\to\infty$ as
$n\to\infty$. Therefore by \eqref{eq.millsasy}, we have
\begin{equation} \label{eq.millsthetan}
\lim_{n\to\infty}\frac{1-\Phi(\epsilon/\theta(n))}{\theta(n)/\epsilon\cdot
\exp(-\epsilon^2/\{2\theta^2(n)\})}=\frac{1}{\sqrt{2\pi}}.
\end{equation}
Since $(1-\Phi(\epsilon/\theta(n)))_{n\geq 1}$ is summable, it
therefore follows that the sequence
\[(\theta(n)/\epsilon\cdot
\exp(-\epsilon^2/\{2\theta^2(n)\}))_{n\geq 1}
\] is summable, so $S'(\epsilon)$ is finite, by definition.

On the other hand, if $S'(\epsilon)$ is finite, and we define
$\phi:[0,\infty)\to \mathbb{R}$ by
\[
\phi(x)=\left\{ \begin{array}{cc} x \exp(-1/(2x^2)), & x>0, \\
0, & x=0,
\end{array}
\right.
\]
we have that $(\phi(\theta(n)/\epsilon))_{n\geq 1}$ is summable.
Therefore $\phi(\theta(n)/\epsilon)\to 0$ as $n\to\infty$. Then, as
$\phi$ is continuous and increasing on $[0,\infty)$, we have that
$\theta(n)/\epsilon\to 0$ as $n\to\infty$, or $\epsilon/\theta(n)\to
\infty$ as $n\to\infty$. Therefore \eqref{eq.millsthetan} holds, and
thus $(1-\Phi(\epsilon/\theta(n)))_{n\geq 1}$ is summable, which
implies that $S(\epsilon)$ is finite, as required.
\end{proof}

Clearly, there are three possibilities; either (A) $S'(\epsilon)$ is
finite for all $\epsilon>0$; (B) $S'(\epsilon)$ is infinite for all
$\epsilon>0$ or (C) $S'(\epsilon)$ is finite for some $\epsilon>0$
and infinite for others. In the last case, we notice that as $\Phi$
is increasing, we have that $\epsilon \mapsto S'(\epsilon)$ is
non--increasing, there must exist an $\epsilon'>0$ such that
$S'(\epsilon)$ is finite for all $\epsilon>\epsilon'$ and
$S'(\epsilon)=+\infty$ for all $\epsilon<\epsilon'$.

By virtue of this observation, it can be seen that the following
result characterises the asymptotic behaviour of $Y$. Roughly
speaking, if $S'(\epsilon)$ is always finite, $Y$ tends to zero with
probability one; if $S'(\epsilon)$ can be finite or infinite, it
fluctuates asymptotically within interval of the form $[-c,c]$,
while if $S'(\epsilon)$ is always infinite, $Y$ is recurrent on
$\mathbb{R}$.
\begin{theorem}  \label{theorem.Ybounded}
Suppose that $\sigma$ obeys \eqref{eq.sigcns} and that $Y$ is the
unique continuous adapted process which obeys \eqref{eq.linsde}. Let
$\theta$ be defined by \eqref{def.theta} and $S'(\cdot)$ by
\eqref{def.s'epsilon}.
\begin{itemize}
\item[(A)] If $\theta$ is such that
\begin{equation} \label{eq.thetastable}
\text{$S'(\epsilon)$ is finite for all $\epsilon>0$},
\end{equation}
then
\begin{equation} \label{eq.Ytto0}
\lim_{t\to\infty} Y(t)=0, \quad\text{a.s.}
\end{equation}
\item[(B)] If $\theta$ is such that there exists $\epsilon'>0$ such that
\begin{subequations} \label{eq.thetabounded}
\begin{align} \label{eq.theta1}
\text{$S'(\epsilon)$ is finite for all $\epsilon>\epsilon'$},\\
\label{eq.theta2} \text{$S'(\epsilon)=+\infty$ for all
$\epsilon<\epsilon'$},
\end{align}
\end{subequations}
then the event $\Omega_1$ defined by
\begin{equation} \label{def.Omega1}
\Omega_1:=\{\omega\in \Omega_Y:0<\limsup_{t\to \infty}|Y(t, \omega)|<+\infty\}.
\end{equation}
is almost sure and there exist deterministic $0<\underline{Y}\leq \overline{Y}<+\infty$ defined by
\begin{align} \label{def.underY}
\underline{Y}&:=\inf_{\omega\in \Omega_1} \limsup_{t\to \infty}|Y(t,\omega)|>0, \\
\label{def.overY}
\overline{Y}&:=\sup_{\omega\in \Omega_1} \limsup_{t\to \infty}|Y(t,\omega)|>0.
\end{align}
%
\item[(C)] If $\theta$ is such that
\begin{equation} \label{eq.thetaunstable}
\text{$S'(\epsilon)=+\infty$ for all $\epsilon>0$},
\end{equation}
then
\begin{equation} \label{eq.Ytunstable}
\limsup_{t\to\infty} |Y(t)|=+\infty, \quad \liminf_{t\to\infty}
|Y(t)|=0, \quad \text{a.s.}
\end{equation}
\end{itemize}
\end{theorem}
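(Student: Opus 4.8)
The plan is to exploit the explicit form $Y(t)=e^{-t}M(t)$ with $M(t)=\int_0^t e^s\sigma(s)\,dB(s)$ and to discretise at integer times. Writing $\xi_{n+1}:=e^{-(n+1)}\int_n^{n+1}e^s\sigma(s)\,dB(s)$, the process obeys the stable linear recursion $Y(n+1)=e^{-1}Y(n)+\xi_{n+1}$, so that $Y(n)=\sum_{k=1}^n e^{-(n-k)}\xi_k$. Because the $\xi_k$ are built from Brownian increments over disjoint intervals they are independent, centred and Gaussian, and since $e^{2s}$ is increasing their variances obey $e^{-2}\theta^2(n)\le\Var(\xi_{n+1})\le\theta^2(n)$. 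The hypotheses enter through the Mills ratio asymptotic \eqref{eq.millsasy} used already in Lemma~\ref{lemma.sepss'eps}: $\mathbb{P}[|\xi_{n+1}|>\epsilon]$ is comparable to $\theta(n)\exp(-\epsilon^2/(2\Var(\xi_{n+1})))/\epsilon$, so $\sum_n\mathbb{P}[|\xi_{n+1}|>\epsilon]$ is finite or infinite exactly as $S'$ is, up to a rescaling of $\epsilon$ by a bounded factor coming from the variance comparison. Such a rescaling shifts thresholds but preserves the trichotomy (A)/(B)/(C). To pass from integer times to all $t\in[n,n+1]$ I control $\eta_n(t):=e^{-t}\int_n^t e^s\sigma(s)\,dB(s)$, for which $Y(t)=e^{-(t-n)}Y(n)+\eta_n(t)$; by the reflection principle (or Doob's inequality) applied to the Gaussian martingale $t\mapsto\int_n^t e^s\sigma\,dB$, the tail of $\sup_{t\in[n,n+1]}|\eta_n(t)|$ is again comparable to $1-\Phi(\epsilon/\theta(n))$, hence summable or not in step with $S'$.

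In case (A), when $S'(\epsilon)$ is finite for every $\epsilon>0$, the first Borel--Cantelli lemma applied with $\epsilon=1/m$ and intersected over $m\in\mathbb{N}$ gives $\xi_n\to0$ and $\sup_{t\in[n,n+1]}|\eta_n(t)|\to0$ almost surely. Feeding $\xi_n\to0$ into the convolution $Y(n)=\sum_{k\le n}e^{-(n-k)}\xi_k$ yields $Y(n)\to0$ by a Toeplitz argument, and then $\sup_{t\in[n,n+1]}|Y(t)|\le|Y(n)|+\sup_{t\in[n,n+1]}|\eta_n(t)|\to0$, so \eqref{eq.Ytto0} holds. In case (C), when $S'(\epsilon)=+\infty$ for every $\epsilon>0$, independence of the $\xi_n$ and the second Borel--Cantelli lemma give $|\xi_n|>\epsilon$ infinitely often for each $\epsilon$, whence $\limsup_{n\to\infty}|\xi_n|=+\infty$; since $|\xi_n|\le|Y(n)|+e^{-1}|Y(n-1)|$, this forces $\limsup_{t\to\infty}|Y(t)|=+\infty$. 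For the companion statement $\liminf_{t\to\infty}|Y(t)|=0$, note that $S'(\epsilon)=+\infty$ for all $\epsilon$ forces $\sigma\notin L^2(0,\infty)$, so $\langle M\rangle_t=\int_0^t e^{2s}\sigma^2(s)\,ds\to\infty$; thus $M$ is a time-changed Brownian motion whose clock runs to infinity and therefore returns to the origin at arbitrarily large times, at which $Y(t)=e^{-t}M(t)=0$. Together these give \eqref{eq.Ytunstable}.

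Case (B) is the one I expect to be the main obstacle. For the upper bound, fix $\epsilon>\epsilon'$: then $\sum_n\mathbb{P}[|\xi_n|>\epsilon]<\infty$ and $\sum_n\mathbb{P}[\sup_{[n,n+1]}|\eta_n|>\epsilon]<\infty$, so by Borel--Cantelli both quantities are $\le\epsilon$ eventually, and the convolution bound gives $\limsup_n|Y(n)|\le\epsilon/(1-e^{-1})$ and hence $\limsup_t|Y(t)|<\infty$. For the lower bound, fix $\epsilon<\epsilon'$: the divergence of $\sum_n\mathbb{P}[|\xi_n|>\epsilon]$ together with independence gives $|\xi_n|>\epsilon$ infinitely often, and $\max(|Y(n)|,|Y(n-1)|)\ge\epsilon/(1+e^{-1})$ then forces $\limsup_t|Y(t)|>0$. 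Intersecting these events over rational $\epsilon$ bracketing $\epsilon'$ produces an almost sure set on which $\limsup_t|Y(t)|$ lies in a fixed, strictly positive and finite deterministic interval.

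To obtain the deterministic constants in \eqref{def.underY}--\eqref{def.overY}, observe that altering finitely many Brownian increments changes $Y(n)$ only by terms of order $e^{-(n-k)}\to0$, so $\limsup_t|Y(t)|$ is measurable with respect to the tail $\sigma$-algebra of the independent family of increments; Kolmogorov's zero--one law then makes it almost surely equal to a deterministic constant $L$, which the preceding brackets place in $(0,\infty)$. Consequently $\Omega_1$ in \eqref{def.Omega1} is almost sure and $\underline{Y},\overline{Y}$ are well defined with $0<\underline{Y}\le\overline{Y}<\infty$. The delicate points throughout are preventing the bounded multiplicative constant from the variance comparison from corrupting the trichotomy, and converting the endpoint Gaussian tail into a supremum tail over each unit interval via reflection while retaining comparability with $S'$.
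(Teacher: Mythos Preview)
The paper does not prove this theorem: it is quoted from \cite{AppCheRod:2010a}, so there is no in--paper proof to compare against directly. That said, your discretisation--plus--Borel--Cantelli argument is correct and almost certainly coincides with the method of the cited reference. The evidence is Remark~\ref{remark.estc1c2}: the explicit constants $\underline{y}=\frac{e^{-1}}{1+e^{-1}}\epsilon'$ and $\overline{y}=\bigl(\frac{1}{1-e^{-1}}+e\bigr)\epsilon'$ recorded there are precisely what your argument produces. The factor $1/(1-e^{-1})$ is the geometric sum in your convolution bound for $\limsup_n|Y(n)|$; the additive $e$ comes from the threshold shift $\epsilon\mapsto e\epsilon$ when you pass from the endpoint tail to the supremum tail via reflection; and $e^{-1}/(1+e^{-1})$ arises from combining the inequality $|\xi_{n+1}|\le(1+e^{-1})\max(|Y(n)|,|Y(n+1)|)$ with the lower variance bound $\Var(\xi_{n+1})\ge e^{-2}\theta^2(n)$, which forces the second Borel--Cantelli to fire only for $\epsilon<e^{-1}\epsilon'$.

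Your Kolmogorov zero--one law step is a genuine addition beyond what the paper records. The argument that $\limsup_{t\to\infty}|Y(t)|$ is measurable with respect to the tail $\sigma$--algebra of the independent family $\{\sigma(B(s)-B(k):s\in[k,k+1])\}_{k\ge0}$ is sound, since for $n\ge m$ one has $Y(n)=e^{-(n-m)}Y(m)+\sum_{k=m+1}^{n}e^{-(n-k)}\xi_k$ and the first term vanishes as $n\to\infty$, while $\eta_n$ depends only on the $n$--th block. This yields $\underline{Y}=\overline{Y}$ almost surely, which is strictly stronger than the statement of part~(B) and is not asserted in Remark~\ref{remark.estc1c2}. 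The only caveat is presentational: the theorem as stated asks merely for $0<\underline{Y}\le\overline{Y}<\infty$, so your brackets from the Borel--Cantelli steps already suffice; the zero--one law is a bonus rather than a requirement.
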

In Theorem~\ref{theorem.Ybounded}, no monotonicity conditions are
imposed on $\sigma$. The form of Theorem~\ref{theorem.Ybounded} is
inspired by those of \cite[Theorem 1]{ChanWill:1989} and
\cite[Theorem 6, Corollary 7]{JAARMR:2009}.
\begin{remark} \label{remark.estc1c2}
The existence of deterministic bounds on $|Y|$ in \eqref{def.underY} and \eqref{def.overY} in part (B) was established as part of Theorem
1 in~\cite{AppCheRod:2010a}. Moreover, it was established as part of
the proof that explicit bounds on $\overline{Y}$ and $\underline{Y}$ can be
given in terms of the critical value of $\epsilon=\epsilon'$ in
\eqref{eq.thetabounded}. The estimates given by the analysis in
\cite{AppCheRod:2010a} are
\begin{equation} \label{eq.estYbarYunderbar}
\underline{Y}\geq \underline{y}:=\frac{e^{-1}}{1+e^{-1}}\epsilon', \quad \overline{Y}\leq
\left(\frac{1}{1-e^{-1}} + e\right)\epsilon'=:\overline{y}.
\end{equation}
Hence we have $0.2689\epsilon'\leq \limsup_{t\to\infty} |Y(t)|\leq
4.3003\epsilon'$, a.s.

It remains an open question as to whether in general the explicit bounds
$\overline{y}$ and $\underline{y}$  on $\overline{Y}$ and $\underline{Y}$ can be improved. In part of
Theorem~\ref{theorem.sufficient} in which case (B) holds, it can be shown by an independent
argument that $\underline{y}= \overline{y}=\epsilon'$ and therefore that
$\overline{Y}=\underline{Y}=\epsilon'$.
%
\end{remark}
\begin{remark} \label{rem.sigL2}
If $\sigma$ obeys \eqref{eq.sigcns} and $\sigma\in L^2(0,\infty)$,
and $Y$ is the solution of \eqref{eq.linsde}, then $Y$ obeys
 $\lim_{t\to\infty} Y(t)=0$ a.s. by Theorem~\ref{th.sigL2}.
Moreover, if $\sigma\in L^2(0,\infty)$, then $\sigma$ obeys
\eqref{eq.thetastable}. If $\sigma$ obeys either
\eqref{eq.thetabounded} or \eqref{eq.thetaunstable}, then
$\sigma\not\in L^2(0,\infty)$.
\end{remark}
The condition that $S'(\epsilon)$ is finite or infinite can be
difficult to check. However, in the case when
\begin{equation} \label{eq.sigsqlogt}
\text{There exists $L\in [0,\infty]$ such that } L=\lim_{t\to\infty}
\sigma^2(t)\log t,
\end{equation}
each of the conditions \eqref{eq.thetastable},
\eqref{eq.thetabounded} and \eqref{eq.thetaunstable} is possible
according as to whether the limit $L$ is zero, non--zero and finite,
or infinite. In this case therefore, the asymptotic behaviour of any solution of \eqref{eq.sde} can be classified completely.
\begin{proposition} \label{prop.sigmasqlogtS}
Suppose that $\sigma\in C([0,\infty);\mathbb{R})$ obeys
\eqref{eq.sigsqlogt} and that $S'(\cdot)$ is defined by
\eqref{def.s'epsilon}.
\begin{itemize}
\item[(A)] If $L=0$, then $S'$ obeys \eqref{eq.thetastable}.
\item[(B)] If $L\in (0,\infty)$, then $S'$ obeys \eqref{eq.thetabounded}.
\item[(C)] If $L=\infty$, then $S'$ obeys \eqref{eq.thetaunstable}.
\end{itemize}
\end{proposition}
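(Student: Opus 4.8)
The plan is to reduce the proposition to a single sharp fact about the sequence $\theta$, and then to compare the series $S'(\epsilon)$ with a $p$-series carrying a logarithmic correction. The central reduction is the claim that \eqref{eq.sigsqlogt} transfers from $\sigma^2(t)\log t$ to the sequence, namely that $\lim_{n\to\infty}\theta^2(n)\log n = L$. Since $\sigma^2$ is continuous, the mean value theorem for integrals gives, for each $n$, a point $\xi_n\in[n,n+1]$ with $\theta^2(n)=\int_n^{n+1}\sigma^2(s)\,ds=\sigma^2(\xi_n)$. Writing $\theta^2(n)\log n=\bigl(\sigma^2(\xi_n)\log\xi_n\bigr)\cdot(\log n/\log\xi_n)$ and noting that $\xi_n\to\infty$ forces $\sigma^2(\xi_n)\log\xi_n\to L$, while $\log n/\log\xi_n\to 1$ because $n\le\xi_n\le n+1$, I obtain $\theta^2(n)\log n\to L$ in all three cases (with the usual conventions for $L=0$ and $L=\infty$). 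Finitely many initial terms, where $\log n\le 0$, are irrelevant to the convergence of $S'(\epsilon)$.

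I would then feed this asymptotic into the summand $\theta(n)\exp(-\epsilon^2/(2\theta^2(n)))$. The key mechanism is that $\theta^2(n)\approx L/\log n$ converts the exponential into a power of $n$: if $\theta^2(n)\le a/\log n$ then $\exp(-\epsilon^2/(2\theta^2(n)))\le n^{-\epsilon^2/(2a)}$, whereas if $\theta^2(n)\ge a/\log n$ then both $\exp(-\epsilon^2/(2\theta^2(n)))\ge n^{-\epsilon^2/(2a)}$ and $\theta(n)\ge\sqrt{a}\,(\log n)^{-1/2}$. For (A), given $\epsilon>0$ pick $\delta\in(0,\epsilon^2/2)$; since $\theta^2(n)\log n\to 0$ we have $\theta^2(n)\le\delta/\log n$ and $\theta(n)\le 1$ for all large $n$, so the summand is at most $n^{-\epsilon^2/(2\delta)}$ with exponent exceeding $1$, giving $S'(\epsilon)<\infty$. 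For (C), given $\epsilon>0$ pick $M>\epsilon^2/2$; since $\theta^2(n)\log n\to\infty$ we have $\theta^2(n)\ge M/\log n$ for large $n$, so the summand is at least $\sqrt{M}\,(\log n)^{-1/2}n^{-\epsilon^2/(2M)}$ with exponent below $1$, and this lower bound sums to $+\infty$.

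For (B) I would identify the critical value $\epsilon'=\sqrt{2L}$. Using $\theta^2(n)\log n\to L\in(0,\infty)$, for any small $\eta>0$ one has $(L-\eta)/\log n\le\theta^2(n)\le(L+\eta)/\log n$ for large $n$. When $\epsilon>\epsilon'$, choose $\eta$ with $\epsilon^2>2(L+\eta)$; then the summand is bounded by a constant times $n^{-\epsilon^2/(2(L+\eta))}$ with exponent greater than $1$, so $S'(\epsilon)<\infty$. When $\epsilon<\epsilon'$, choose $\eta$ with $\epsilon^2<2(L-\eta)$; then the summand is at least $\sqrt{L-\eta}\,(\log n)^{-1/2}n^{-\epsilon^2/(2(L-\eta))}$ with exponent below $1$, so $S'(\epsilon)=+\infty$. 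This is precisely \eqref{eq.thetabounded} with $\epsilon'=\sqrt{2L}$.

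I expect the main obstacle to be the rigorous treatment of the divergence estimates, where the power of $n$ is multiplied by the logarithmic factor $(\log n)^{-1/2}$: one must verify that $\sum(\log n)^{-1/2}n^{-\beta}$ still diverges when $\beta<1$. I would handle this by fixing $\beta'\in(\beta,1)$ and observing that $(\log n)^{-1/2}n^{-\beta}\ge n^{-\beta'}$ for all large $n$, since $n^{\beta'-\beta}(\log n)^{-1/2}\to\infty$, followed by comparison with the divergent series $\sum n^{-\beta'}$. The transfer step $\theta^2(n)\log n\to L$ is the only other delicate point, but the mean value theorem for integrals renders it routine; note also that in cases (B) and (C) the lower bounds force $\theta(n)>0$ for large $n$, so the convention assigning zero to vanishing summands causes no difficulty.
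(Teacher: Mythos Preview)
Your proof is correct and complete, but it takes a somewhat different route from the paper. The paper works with the companion series $S(\epsilon)=\sum_n\bigl(1-\Phi(\epsilon/\theta(n))\bigr)$ rather than with $S'(\epsilon)$ directly: it uses the Mills ratio asymptotic $\log(1-\Phi(x))\sim -x^2/2$ to compute $\lim_{n\to\infty}\log(1-\Phi(\epsilon/\theta(n)))/\log n=-\epsilon^2/(2L)$, reads off convergence or divergence of $S(\epsilon)$ from whether this limit is below or above $-1$, and finally invokes Lemma~\ref{lemma.sepss'eps} to transfer the conclusion to $S'(\epsilon)$. Case~(C) in the paper is handled by contradiction (assuming $S'(\epsilon^\ast)<\infty$ forces $\theta(n)\to 0$, which then feeds the same limit calculation).

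Your argument is more self-contained: you bypass $\Phi$, $S(\epsilon)$, and Lemma~\ref{lemma.sepss'eps} altogether by estimating the summand of $S'(\epsilon)$ directly as a power of $n$ times a logarithmic factor, and you dispose of the $(\log n)^{-1/2}$ correction cleanly. You also make explicit, via the mean value theorem for integrals, the passage from $\sigma^2(t)\log t\to L$ to $\theta^2(n)\log n\to L$, which the paper simply asserts. Both arguments identify the critical value $\epsilon'=\sqrt{2L}$ in case~(B). The paper's route is perhaps more conceptual in tying the estimate back to the Gaussian tail that underlies the process $Y$, while yours is more elementary and avoids any auxiliary lemma.
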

Scrutiny of the proof reveals that we can replace the condition
\eqref{eq.sigsqlogt} with the weaker condition
 \begin{equation} \label{eq.thetasqlogt}
\text{There exists $L\in [0,\infty]$ such that } L=\lim_{n\to\infty}
\theta^2(n)\log n,
\end{equation}
and still obtain the same trichotomy in Proposition~\ref{prop.sigmasqlogtS}. The proof of Proposition~\ref{prop.sigmasqlogtS} is postponed to Section~\ref{sec.proofs1}.

The conditions of Theorem~\ref{theorem.Ybounded} can be quite
difficult to check in practice. In \cite{AppCheRod:2010a},
easily--checked sufficient conditions on $\sigma$ for which $Y$ is
bounded, stable or unstable, are developed. These results are extended slightly here, and will
also be used to analyse the nonlinear equation \eqref{eq.sde}. For this reason, they are stated afresh here.

In the case when $\sigma\in L^2(0,\infty)$ we have that $Y$ tends to
zero. Therefore, we confine attention to the case where
$\sigma\not\in L^2(0,\infty)$. In this case, we can define a number
$T>0$ such that $\int_0^t e^{2s}\sigma^2(s)\,ds>e^e$ for $t>T$ and
so one can define a function $\Sigma:[T,\infty)\to [0,\infty)$ by
\begin{equation} \label{def.Sigma}
\Sigma(t)=\left( \int_0^t e^{-2(t-s)}\sigma^2(s)\,ds \right)^{1/2}
\left(\log t  \right)^{1/2}, \quad t\geq T.
\end{equation}
Our main result in this direction can now be stated. Apart from part
(C) it appears in \cite[Theorem 3.2]{AppCheRod:2010a}.
\begin{theorem} \label{theorem.sufficient}
Suppose that $\sigma$ obeys \eqref{eq.sigcns} and that $Y$ is the
unique continuous adapted process which obeys \eqref{eq.linsde}. Let
$\Sigma$ be given by \eqref{def.Sigma}.
\begin{itemize}
\item[(A)] If $\lim_{t\to \infty} \Sigma^2(t)=0$
then
\begin{equation} \label{eq.Y0}
\lim_{t\to\infty} Y(t)=0, \quad\text{a.s.}
\end{equation}
\item[(B)] If $\liminf_{t\to \infty} \Sigma^{2}(t)=L<+\infty$ then
\begin{equation} \label{eq.Yfiniteliminf}
\limsup_{t\to\infty} |Y(t)|\geq \sqrt{2L},\quad \text{a.s.}
\end{equation}
\item[(C)] If $\limsup_{t\to \infty} \Sigma^{2}(t)=L<+\infty$ then
\begin{equation} \label{eq.Yfinitelimsup}
\limsup_{t\to\infty} |Y(t)|\leq \sqrt{2L},\quad \text{a.s.}
\end{equation}
\item[(D)] If $\lim_{t\to \infty} \Sigma^{2}(t)=L<+\infty$ then
\begin{equation} \label{eq.Yfinite}
\limsup_{t\to\infty} |Y(t)|=\sqrt{2L},\quad \text{a.s.}
\end{equation}
\item[(E)] If $\lim_{t\to \infty} \Sigma^{2}(t)=+\infty$
then
\begin{equation} \label{eq.Yinfinite}
\limsup_{t\to\infty} |Y(t)|=+\infty, \quad \text{a.s.}
\end{equation}
\end{itemize}
\end{theorem}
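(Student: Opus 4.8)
The plan is to reduce the five parts to part (C), which is the only genuinely new assertion; the others are either quoted from \cite[Theorem 3.2]{AppCheRod:2010a} or follow from (B), (C) and (E). Indeed, part (A) is the special case $L=0$ of (C): if $\lim_{t\to\infty}\Sigma^2(t)=0$, then $\limsup_{t\to\infty}\Sigma^2(t)=0$, so (C) forces $\limsup_{t\to\infty}|Y(t)|\le 0$, which is \eqref{eq.Y0}. Part (D) follows by applying (B) and (C) together: when $\lim_{t\to\infty}\Sigma^2(t)=L$ we have $\liminf_{t\to\infty}\Sigma^2(t)=\limsup_{t\to\infty}\Sigma^2(t)=L$, and the inequalities $\limsup_{t\to\infty}|Y|\ge\sqrt{2L}$ and $\limsup_{t\to\infty}|Y|\le\sqrt{2L}$ combine to give \eqref{eq.Yfinite}. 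Thus, taking (B) and (E) from the cited reference, it remains only to prove (C).

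For (C) I would work directly with the Gaussian structure of $Y$. Writing $M(t)=\int_0^t e^s\sigma(s)\,dB(s)$, the representation \eqref{eq.Yform} reads $Y(t)=e^{-t}M(t)$, where $M$ is a continuous martingale with \emph{deterministic} quadratic variation $q(t):=\int_0^t e^{2s}\sigma^2(s)\,ds$; here $q(t)\to\infty$ since $\sigma\not\in L^2(0,\infty)$. Because $\mathbb{E}[Y^2(t)]=\int_0^t e^{-2(t-s)}\sigma^2(s)\,ds=:v(t)$, the definition \eqref{def.Sigma} yields the key identities $\Sigma^2(t)=v(t)\log t$ and $q(t)=e^{2t}v(t)$. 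The hypothesis $\limsup_{t\to\infty}\Sigma^2(t)=L<+\infty$ says precisely that, once $\delta>0$ is fixed, $v(t)\le (L+\delta)/\log t$ for all large $t$; in particular $v(t)\to 0$.

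The core is a Borel--Cantelli argument on a fine mesh. Fix $h>0$ and $\delta>0$ and set $t_j=jh$. On $[t_j,t_{j+1}]$ we have $\sup_{t\in[t_j,t_{j+1}]}|Y(t)|\le e^{-t_j}\sup_{t\le t_{j+1}}|M(t)|$. By the Dambis--Dubins--Schwarz time change $M(t)=W(q(t))$ for a standard Brownian motion $W$, together with the reflection principle, one obtains for every $c>0$
\begin{equation*}
\mathbb{P}\Big[\sup_{t\in[t_j,t_{j+1}]}|Y(t)|>c\Big]\le \mathbb{P}\Big[\sup_{u\le q(t_{j+1})}|W(u)|>ce^{t_j}\Big]\le 4\Big(1-\Phi\big(ce^{t_j}/\sqrt{q(t_{j+1})}\,\big)\Big).
\end{equation*}
Since $ce^{t_j}/\sqrt{q(t_{j+1})}=ce^{-h}/\sqrt{v(t_{j+1})}$ and $v(t_{j+1})\le (L+\delta)/\log t_{j+1}$, the Mills ratio estimate \eqref{eq.millsasy} bounds the right--hand side, up to a constant and a decaying slowly varying factor, by $t_{j+1}^{-(ce^{-h})^2/(2(L+\delta))}$. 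This is summable in $j$ exactly when $(ce^{-h})^2>2(L+\delta)$, i.e.\ when $c>e^h\sqrt{2(L+\delta)}$. For such $c$, the first Borel--Cantelli lemma gives $\sup_{t\in[t_j,t_{j+1}]}|Y(t)|\le c$ for all large $j$ almost surely, whence $\limsup_{t\to\infty}|Y(t)|\le c$ a.s.

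Finally I would pass to the limit in the parameters: letting $c\downarrow e^h\sqrt{2(L+\delta)}$, then $h\downarrow 0$ and $\delta\downarrow 0$ along sequences and intersecting the resulting almost sure events, yields $\limsup_{t\to\infty}|Y(t)|\le\sqrt{2L}$ a.s., which is \eqref{eq.Yfinitelimsup}. The main obstacle is securing the \emph{sharp} constant $\sqrt{2L}$ rather than a larger multiple of $\sqrt{L}$: controlling the supremum of $|Y|$ across a time interval unavoidably introduces a factor $e^{h}$, arising from the mismatch between the factor $e^{-t_j}$ used and the $e^{-t_{j+1}}$ that would exactly match $q(t_{j+1})$, and only refining the mesh $h\to 0$ removes this loss. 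This is precisely why a direct appeal to Theorem~\ref{theorem.Ybounded}(B) does not suffice, since the deterministic bounds recorded in Remark~\ref{remark.estc1c2} are not sharp.
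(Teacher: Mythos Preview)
Your proposal is correct and matches the paper's organisation: the paper too takes (A), (B), (E) from \cite[Theorem~3.2]{AppCheRod:2010a}, observes that (D) is an immediate corollary of (B) and (C), and remarks that (A) is the $L=0$ limiting case, exactly as you do. For part (C) the paper simply states that the proof ``uses the methods of \cite[Theorem~3.2]{AppCheRod:2010a}, so is not given''; your explicit argument via the DDS time-change, reflection principle, Mills ratio and a Borel--Cantelli on a mesh of width $h\downarrow 0$ is precisely the type of Gaussian tail computation those methods entail, and it correctly recovers the sharp constant $\sqrt{2L}$.
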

The proof of part (C) uses the methods of \cite[Theorem
3.2]{AppCheRod:2010a}, so is not given. It is now clear that part
(D) is merely a corollary of parts (B) and (C). Parts (A) and (E)
may also be thought of as limiting cases of part (D) as $L\to 0$ and
$L\to\infty$, respectively. We note that when $\sigma$ obeys
\eqref{eq.sigsqlogt}, then $\Sigma^2(t)\to L$ as $t\to\infty$, so
that in part (D), we have from the proof of part (B) of
Proposition~\ref{prop.sigmasqlogtS} that $S'$ obeys
\eqref{eq.thetabounded} with $\epsilon'=\sqrt{2L}$ and by
\eqref{eq.Yfinite}, that
$\overline{Y}=\underline{Y}=\sqrt{2L}=\epsilon'$ in \eqref{def.underY} and \eqref{def.overY}. This strengthens the general estimates given
on  $\overline{Y}$ and $\underline{Y}$ in
\eqref{eq.estYbarYunderbar}.

Theorem~\ref{theorem.Ybounded} gives necessary and sufficient
conditions in terms of the sequence $\theta$ for $Y$ to exhibit
certain types of asymptotic behaviour, while
Theorem~\ref{theorem.sufficient} gives sufficient conditions in
terms of the function $\Sigma$. In the next result, we explore the
relationship between $\Sigma$ and $\theta$, and the conditions in
Theorems~\ref{theorem.Ybounded} and \ref{theorem.sufficient}. One
consequence of this analysis is to give simpler sufficient
conditions equivalent to those in part (A) of
Theorem~\ref{theorem.sufficient} under which $Y$ tends to zero.
\begin{proposition}  \label{prop.thetaSigma}
Suppose that $\Sigma$ is given by \eqref{def.Sigma},  that $\theta$
is given by \eqref{def.theta}, and that $\Theta$ is given by
\begin{equation} \label{def.capTheta}
\Theta^2(n)= \sum_{j=0}^{n-1} e^{-2(n-j)} \theta^2(j), \quad n\geq
1.
\end{equation}
\begin{itemize}
\item[(i)]  The following statements are equivalent:
\begin{enumerate}
\item[(A)] $\lim_{t\to\infty} \Sigma^2(t)=0$;
\item[(B)] $\lim_{n\to\infty} \Sigma^2(n)=0$;
\item[(C)] $\lim_{n\to\infty} \theta^2(n)\log n =0$.
\end{enumerate}
Moreover, all imply that $S'(\epsilon)<+\infty$ for all
$\epsilon>0$.
\item[(ii)] The following statements are equivalent:
\begin{enumerate}
\item[(A)] $\limsup_{t\to\infty} \Sigma^2(t)\in (0,\infty)$;
\item[(B)] $\limsup_{n\to\infty} \Sigma^2(n)\in (0,\infty)$;
\item[(C)] $\limsup_{n\to\infty} \theta^2(n)\log n \in (0,\infty)$.
\end{enumerate}
Moreover, all imply that there exists $\epsilon'>0$ such that
$S'(\epsilon)<+\infty$ for all $\epsilon>\epsilon'$.
\item[(iii)] The following statements are equivalent:
\begin{enumerate}
\item[(A)] $\liminf_{t\to\infty} \Sigma^2(t)\in (0,\infty)$;
\item[(B)] $\liminf_{n\to\infty} \Sigma^2(n)\in (0,\infty)$;
\item[(C)] $\liminf_{n\to\infty} \Theta^2(n)\log n \in (0,\infty)$.
\end{enumerate}
Moreover,  all imply that $\liminf_{n\to\infty} \theta^2(n)\log n\in
[0,\infty)$.
\item[(iv)] The following statements are equivalent:
\begin{enumerate}
\item[(A)] $\lim_{t\to\infty} \Sigma^2(t)=+\infty$;
\item[(B)] $\lim_{n\to\infty} \Sigma^2(n)=+\infty$;
\item[(C)] $\lim_{n\to\infty} \Theta^2(n)\log n =+\infty$.
\end{enumerate}
Moreover, all imply that $\limsup_{n\to\infty} \theta^2(n)\log n
=\infty$.
\end{itemize}
\end{proposition}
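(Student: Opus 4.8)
The plan is to reduce everything to the single auxiliary quantity $v(t):=\int_0^t e^{-2(t-s)}\sigma^2(s)\,ds$, so that $\Sigma^2(t)=v(t)\log t$, and then to compare $v$ at and between integer points with the sequences $\theta$ and $\Theta$. The first and most important step is an elementary integral-to-sum comparison on unit intervals. Writing $v(n)=\sum_{j=0}^{n-1}\int_j^{j+1}e^{-2(n-s)}\sigma^2(s)\,ds$ and using $e^{-2(n-j)}\le e^{-2(n-s)}\le e^{2}e^{-2(n-j)}$ for $s\in[j,j+1]$ yields at once
\[
\Theta^2(n)\le v(n)\le e^2\Theta^2(n),\qquad\text{hence}\qquad \Theta^2(n)\log n\le \Sigma^2(n)\le e^2\Theta^2(n)\log n.
\]
This two-sided bound immediately gives the equivalence (B)$\Leftrightarrow$(C) in parts (iii) and (iv), where (C) is phrased in terms of $\Theta$, on passing to $\liminf$ and to $\lim=+\infty$ respectively.

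Second, I would pass from the real variable $t$ to the integers. From the identity $v(t)=e^{-2(t-n)}v(n)+\int_n^t e^{-2(t-s)}\sigma^2(s)\,ds$, valid for $t\in[n,n+1]$, together with $0\le\int_n^t e^{-2(t-s)}\sigma^2(s)\,ds\le\theta^2(n)$ and the elementary bound $\theta^2(n)\le e^2 v(n+1)$ (obtained from a single interval exactly as above), one gets $e^{-2}v(n)\le v(t)\le v(n)+e^2 v(n+1)$ for $t\in[n,n+1]$. Multiplying by $\log t\in[\log n,\log(n+1)]$ and using $\log(n+1)/\log n\to1$ sandwiches $\Sigma^2(t)$ between fixed multiples of $\Sigma^2(n)$ and $\Sigma^2(n+1)$. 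This proves the equivalence (A)$\Leftrightarrow$(B) in all four parts simultaneously: convergence to $0$, finiteness and positivity of $\limsup$, of $\liminf$, and divergence to $+\infty$ are each preserved between the continuous and the discrete versions.

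Third, I would tie the sequence $\theta$ to $\Sigma$ and $\Theta$. The bound $\theta^2(n)\le e^2 v(n+1)$ gives $\theta^2(n)\log n\le e^2\Sigma^2(n+1)\cdot(\log n/\log(n+1))$, so that $\limsup_n\theta^2(n)\log n\le e^2\limsup_n\Sigma^2(n)$ and, moreover, $\Sigma^2(n)\to0$ forces $\theta^2(n)\log n\to0$; this supplies the easy halves of (B)$\Leftrightarrow$(C) in parts (i) and (ii), and, combined with the first step, the ``moreover'' claim in part (iii). The reverse direction requires controlling $\Theta$ by $\theta$, which is the one genuinely delicate estimate: writing $\Theta^2(n)\log n=\sum_{j}e^{-2(n-j)}\theta^2(j)\log j\cdot(\log n/\log j)$ and setting $\beta=\limsup_j\theta^2(j)\log j$, I would show $\limsup_n\Theta^2(n)\log n\le K\beta$ for an absolute constant $K$, and that $\beta=0$ forces $\Theta^2(n)\log n\to0$. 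The mechanism is that the weights $e^{-2(n-j)}$ concentrate near $j=n$, where $\log n/\log j\approx1$; splitting the sum at $j=\lceil\sqrt n\rceil$ bounds the high-$j$ block by a convergent geometric series (there $\log n/\log j\le2$) while the low-$j$ block carries a factor $e^{-2(n-\sqrt n)}$ that kills its at most $\sqrt n$ terms. Balancing the logarithmic weighting against the geometric kernel here is where I expect the main obstacle to lie. The strict positivity required in part (ii) is then recovered by ruling out $\beta=0$ via part (i).

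Finally, the four trichotomy statements assemble from these pieces, and the remaining ``moreover'' conclusions follow by direct estimation of $S'$ given by \eqref{def.s'epsilon}: if $\theta^2(n)\log n\to0$ then for any $\epsilon>0$ one has $\theta^2(n)\le\delta/\log n$ eventually, whence the summand $\theta(n)\exp(-\epsilon^2/(2\theta^2(n)))\le\sqrt\delta\,n^{-\epsilon^2/(2\delta)}$, which is summable once $\delta<\epsilon^2/2$, giving $S'(\epsilon)<+\infty$ for all $\epsilon$ (part (i)); the same computation with $\theta^2(n)\log n\le\lambda+\delta$ shows $S'(\epsilon)<+\infty$ whenever $\epsilon^2>2\lambda$, that is, for all $\epsilon>\epsilon':=\sqrt{2\lambda}$ (part (ii)). Alternatively, these two may simply be cited from Proposition~\ref{prop.sigmasqlogtS} via its stated weakening to a hypothesis on $\theta^2(n)\log n$. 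The ``moreover'' in part (iv) is the contrapositive of the delicate estimate of the third step: if $\limsup_n\theta^2(n)\log n<+\infty$ then $\Theta^2(n)\log n$ is bounded, contradicting $\Theta^2(n)\log n\to+\infty$.
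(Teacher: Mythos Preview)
Your proposal is correct and follows essentially the same architecture as the paper's proof: the two--sided comparison $\Theta^2(n)\log n\le \Sigma^2(n)\le e^2\Theta^2(n)\log n$, the sandwich $e^{-2}\Sigma^2(n)\le \Sigma^2(t)\le e^2\Sigma^2(n+1)$ for $t\in[n,n+1)$ to pass between continuous and discrete, the easy one--sided bound $\theta^2(n-1)\log n\le e^2\Sigma^2(n)$, and one ``delicate'' estimate bounding $\Theta^2(n)\log n$ (equivalently $\Sigma^2(n)$) in terms of $\limsup_j \theta^2(j)\log j$.

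The only substantive difference is in that last estimate. The paper splits the sum at a \emph{fixed} index $N(\epsilon)$ (chosen so that $\theta^2(j)\log j<\epsilon$ for $j\ge N(\epsilon)$), compares the tail $\sum_{j\ge N(\epsilon)} e^{2j}/\log j$ to the integral $\int e^{2x}/\log x\,dx$, and evaluates the resulting ratio by l'H\^opital's rule, obtaining the explicit constant $e^2/2$. Your split at $\lceil\sqrt n\rceil$ avoids l'H\^opital entirely: the high block uses $\log n/\log j\le 2$ and the geometric kernel directly, while the low block is killed by the factor $e^{-2(n-\sqrt n)}$. Both work; yours is slightly more elementary, the paper's gives a cleaner constant and reuses the same computation verbatim for parts (ii) and (iv). Your handling of the ``moreover'' clauses about $S'(\epsilon)$ by direct summand estimation is also fine, and as you note is equivalent to citing Proposition~\ref{prop.sigmasqlogtS} in its $\theta^2(n)\log n$ form.
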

Once again, the proof is relegated to Section~\ref{sec.proofs1}.
%

\section{Nonlinear Equation}
In this section we explore the asymptotic behaviour of the nonlinear
differential equation \eqref{eq.sde}. In the first part of this
section, we establish a connection between the solution of
\eqref{eq.linsde} and solutions of \eqref{eq.sde}. This enables us to state the
main results of the paper, which appear, together with
interpretation and examples, in the second part of this section.
\subsection{Connection between the linear and nonlinear equation}
In our first result, we show that knowledge of the pathwise
asymptotic behaviour of $Y(t)$ as $t\to\infty$ enables us to infer a
great deal about the asymptotic behaviour of $X(t)$ as $t\to\infty$.
Indeed, we show in broad terms that $X$ inherits the asymptotic
behaviour exhibited by $Y$, when $f$ obeys
\eqref{eq.fglobalunperturbed}.
\begin{proposition} \label{prop.yimpliesx}
Suppose that $f$ satisfies \eqref{eq.fglobalunperturbed} and
that $\sigma$ obeys \eqref{eq.sigcns}.
Let $X$ be any solution of \eqref{eq.sde}, and $Y$ the solution of
\eqref{eq.linsde}, and suppose that the a.s. events $\Omega_X$ and $\Omega_Y$ are defined as in \eqref{def.OmegaX} and \eqref{def.OmegaY} respectively.
\begin{itemize}
\item[(A)] Suppose that there is an a.s. event  defined by
\[
\{\omega\in \Omega_Y:\lim_{t\to \infty}|Y(t, \omega)|=0\}.
\]
Then $\lim_{t\to \infty}X(t)=0$ a.s.
\item[(B)]
Suppose that the event $\Omega_1$  defined by \eqref{def.Omega1} is almost sure.
Then the event
\begin{equation} \label{def.Omega2}
\Omega_2=\Omega_1\cap\Omega_X
\end{equation}
is almost sure, and there exists a positive and deterministic $\underline{X}$ given by
\begin{equation} \label{def.Xunderline}
\underline{X}=\inf_{\omega\in \Omega_2} \limsup_{t\to\infty} |X(t,\omega)|.
\end{equation}
\item[(C)] Suppose that there is an a.s. event  defined by
\[
\{\omega\in \Omega_Y:\limsup_{t\to \infty}|Y(t, \omega)|=+\infty\}.
\]
Then $\limsup_{t\to \infty}|X(t)|=+\infty$ a.s.
\end{itemize}
\end{proposition}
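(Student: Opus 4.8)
The plan is to treat all three parts through the difference process $Z:=X-Y$. Subtracting \eqref{eq.linsde} from \eqref{eq.sde} cancels the stochastic integral, so that for $\omega\in\Omega_X\cap\Omega_Y$ the realisation $Z(\cdot,\omega)$ is continuously differentiable and, since $X=Z+Y$, solves the \emph{random ordinary} differential equation
\[
Z'(t)=Y(t)-f(Z(t)+Y(t)),\quad t\ge 0,\qquad Z(0)=\xi.
\]
Writing $Y=X-Z$, this is the affine equation $Z'(t)+Z(t)=X(t)-f(X(t))$, whose variation-of-constants representation
\[
Z(t)=e^{-t}\xi+\int_0^t e^{-(t-s)}\bigl(X(s)-f(X(s))\bigr)\,ds
\]
is the workhorse for parts (B) and (C). Each conclusion then follows by transferring the pathwise behaviour of $Y$ to $X$ through $Z$.

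For part (A) I would split according to whether $\sigma\in L^2(0,\infty)$. If it is, Theorem~\ref{th.sigL2} already gives $X(t)\to 0$ a.s. with no reference to $Y$. If $\sigma\not\in L^2(0,\infty)$, then Theorem~\ref{lemma.Xliminf} yields $\liminf_{t\to\infty}|X(t)|=0$ a.s.; combined with the hypothesis $Y(t)\to 0$ a.s. this gives $\liminf_{t\to\infty}|Z(t)|=0$ a.s. The crux is then a purely deterministic stability lemma: \emph{if $g$ is continuous with $g(t)\to 0$, and $z$ solves $z'=-f(z+g)+g$ with $\liminf_{t\to\infty}|z(t)|=0$, then $z(t)\to 0$.} I would argue by contradiction: if $\limsup|z|=c>0$, fix levels $0<p<q<c$, so that $|z|$ performs infinitely many upcrossings of $[p,q]$ along intervals $[a,b]$ on which $z$ keeps a fixed sign and $p\le|z|\le q$. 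Choosing $a$ so large that $|g|<\delta$ there, with $\delta$ smaller than both $p/2$ and $\mu:=\min\{|f(x)|:p/2\le|x|\le 2q\}$ (which is positive by continuity of $f$ and $xf(x)>0$ from \eqref{eq.fglobalunperturbed} on a compact set away from the origin), one finds that $z+g$ stays in the region where $|f|\ge\mu$ with the sign of $z$, so that $|z|'\le-\mu+\delta<0$ throughout $[a,b]$, contradicting $|z(b)|=q>p=|z(a)|$. Applying this pathwise with $g=Y$ gives $Z\to 0$, hence $X=Z+Y\to 0$ a.s. This lemma is the main obstacle, being the only place the mean-reversion structure $xf(x)>0$ is genuinely used.

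Part (C) is a short contradiction argument. On the event $\{\limsup_{t\to\infty}|X(t)|<\infty\}\cap\Omega_X$ each realisation of $X$ is bounded, say by $K(\omega)$, so $|X-f(X)|\le K(\omega)+\max_{|x|\le K(\omega)}|f(x)|$; the variation-of-constants formula then bounds $|Z|$, whence $Y=X-Z$ is bounded and $\limsup_{t\to\infty}|Y(t)|<\infty$ on this event. Since the hypothesis of (C) forces $\limsup_{t\to\infty}|Y(t)|=+\infty$ a.s., the event is null, which is exactly \eqref{eq.stochunstable} for $X$.

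For part (B), $\Omega_2=\Omega_1\cap\Omega_X$ is almost sure because $\Omega_1$ is almost sure by hypothesis and $\Omega_X$ is almost sure by Proposition~\ref{prop.exist}. Since $\Omega_1$ being almost sure excludes cases (A) and (C) of Theorem~\ref{theorem.Ybounded} (where $\limsup|Y|$ is $0$ or $+\infty$), we are in case (B); thus $\underline{Y}$ of \eqref{def.underY} is strictly positive and $\limsup_{t\to\infty}|Y(t,\omega)|\ge\underline{Y}$ for every $\omega\in\Omega_1$. To bound $\limsup|X|$ from below, set $\psi(c):=\sup_{|x|\le c}|x-f(x)|$, which is continuous, nondecreasing and vanishes at $0$. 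Using the averaging estimate $\limsup_{t\to\infty}\int_0^t e^{-(t-s)}h(s)\,ds\le\limsup_{s\to\infty}h(s)$ in the variation-of-constants formula gives $\limsup_{t\to\infty}|Z(t)|\le\psi(c)$ with $c:=\limsup_{t\to\infty}|X(t)|$; then from $Y=X-Z$ and subadditivity of $\limsup$, $\underline{Y}\le\limsup|Y|\le c+\psi(c)=:\Lambda(c)$. As $\Lambda$ is continuous, strictly increasing and $\Lambda(0)=0$, this forces $c\ge\Lambda^{-1}(\underline{Y})=:\underline{X}>0$ for every $\omega\in\Omega_2$, so the infimum in \eqref{def.Xunderline} is bounded below by the deterministic positive constant $\underline{X}$, as required.
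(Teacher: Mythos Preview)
Your proof is correct and follows essentially the same approach as the paper's: the decomposition $Z=X-Y$, the variation-of-constants formula $Z(t)=e^{-t}\xi+\int_0^t e^{-(t-s)}(X(s)-f(X(s)))\,ds$, the $L^2$/non-$L^2$ split in part~(A) invoking Theorems~\ref{th.sigL2} and~\ref{lemma.Xliminf}, and the boundedness contradiction in parts~(B) and~(C) are all identical in structure to the paper's proof in Section~\ref{sec:s9}.

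The only notable variation is in the deterministic lemma for part~(A). The paper fixes a target level $\eta$, uses a modulus of continuity for $f$ on $[-2,2]$ to pick $\epsilon$ with $\epsilon+\mu(\epsilon)<f(\eta)\wedge|f(-\eta)|$, and derives a sign contradiction on $z'$ at the first time $|z|$ hits $\eta$ after hitting $\eta/2$. Your version instead takes two levels $0<p<q$, uses $\mu:=\min\{|f(x)|:p/2\le|x|\le 2q\}>0$ directly, and obtains $|z|'<0$ along an upcrossing interval, contradicting $|z(b)|>|z(a)|$. Both arguments exploit the same fact---that $xf(x)>0$ forces $|f|$ to be bounded below on compact annuli---so this is a cosmetic difference; your formulation avoids introducing the modulus of continuity and is arguably a touch cleaner. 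Similarly, in part~(B) your use of $\psi(c)=\sup_{|x|\le c}|x-f(x)|$ and $\Lambda(c)=c+\psi(c)$ is a mild repackaging of the paper's $h_f(c)=2c+\max_{|x|\le c}|f(x)|$; since $\Lambda(c)\le h_f(c)$, your lower bound $\Lambda^{-1}(\underline{Y})$ is at least as good as the paper's $h_f^{-1}(\underline{Y})=\underline{x}(f,\underline{Y})$. One small point: in~(B) you should distinguish the case $c=\limsup|X|=\infty$ (for which the conclusion is trivial) from $c<\infty$ (where your averaging estimate applies), and note that your $\underline{X}:=\Lambda^{-1}(\underline{Y})$ is a lower bound for, not equal to, the infimum in~\eqref{def.Xunderline}.
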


In the proof of part (B), we can even determine an explicit lower bound for $\underline{X}$.
If the event $\Omega_1$ is defined by \eqref{def.Omega1}, we may define as in \eqref{def.underY} and
\eqref{def.overY} the deterministic numbers $0<\underline{Y}\leq \overline{Y}<+\infty$. For any $f$ obeying \eqref{eq.fglobalunperturbed} 
it can be shown that there is function $y\mapsto\underline{x}(y)=\underline{x}(f,y)$
which, for $y\geq 0$, obeys
\begin{equation} \label{def.lambda}
2\underline{x} + \max_{|x|\leq \underline{x}} |f(x)|=y.
\end{equation}
This leads to the estimate
\begin{equation} \label{eq.Xunderlinelower}
\underline{X}\geq \underline{x}(f,\underline{Y}),
\end{equation}
where $\underline{Y}$ is given by \eqref{def.underY}. Moreover, as it transpires that $\underline{x}(f,\cdot)$ is an increasing function, by \eqref{eq.estYbarYunderbar}, we can estimate $\underline{X}$ explicitly according to
\[
\underline{X}\geq \underline{x}(f,\underline{y}),
\]
where $\underline{y}$ is given explicitly by \eqref{eq.estYbarYunderbar}.

An interesting implication of part (C) is that an \emph{arbitrarily
strong} mean--reverting force (as measured by $f$) cannot keep
solutions of \eqref{eq.sde} within bounded limits if the noise
perturbation is so intense that  a linear mean--reverting force
cannot keep solutions bounded. Therefore, the system will run
``out of control'' (in the sense of becoming unbounded) however
strongly the function $f$ pushes it back towards the equilibrium
state.
\subsection{Main results}
Due to Theorem~\ref{theorem.Ybounded}, we can readily use
Proposition~\ref{prop.yimpliesx} to characterise the asymptotic
behaviour of solutions of \eqref{eq.sde}.
\begin{theorem}  \label{theorem.XtozeroXunbdd}
Suppose that $\sigma$ obeys \eqref{eq.sigcns}, $f$ obeys
\eqref{eq.fglobalunperturbed} and that $X$ is any continuous
adapted process which obeys \eqref{eq.sde}. Let $\theta$ be defined
by \eqref{def.theta} and $S'(\cdot)$ by \eqref{def.s'epsilon}.
\begin{itemize}
\item[(A)] If $\theta$ is such that \eqref{eq.thetastable} holds, then
\[
\lim_{t\to\infty} X(t)=0, \quad\text{a.s.}
\]
\item[(B)] If $\theta$ is such that \eqref{eq.thetabounded} holds, then there exists an almost sure event
$\Omega_2=\Omega_1\cap \Omega_X$, and a deterministic $\underline{X}>0$
defined by \eqref{def.Xunderline} such that
\[
\underline{X}=\inf_{\omega\in \Omega_2} \limsup_{t\to\infty} |X(t,\omega)|>0.
\]
Moreover, $\underline{X}$  obeys
\[
\underline{X}\geq \underline{x}(f,\underline{Y}),
\]
where $\underline{x}(f,\cdot)$ is the unique solution of \eqref{def.lambda}, and $\underline{Y}$ is defined by \eqref{def.underY}. Furthermore,
\[
\liminf_{t\to\infty} |X(t)|=0, \quad \text{a.s.}
\]
\item[(C)] If $\theta$ is such that \eqref{eq.thetaunstable} holds, then
\[
\limsup_{t\to\infty} |X(t)|=+\infty, \quad \liminf_{t\to\infty}
|X(t)|=0, \quad \text{a.s.}
\]
\end{itemize}
\end{theorem}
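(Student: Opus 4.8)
The plan is to read this classification off as a corollary of the linear classification in Theorem~\ref{theorem.Ybounded} and the transfer principle in Proposition~\ref{prop.yimpliesx}, supplemented by Theorem~\ref{lemma.Xliminf} for the lower (recurrence-type) behaviour. In each of the three cases the hypothesis is stated in terms of exactly the condition on $S'(\cdot)$ that drives the corresponding part of Theorem~\ref{theorem.Ybounded}, so the strategy is uniform: first extract the pathwise behaviour of the affine process $Y$, and then push it through to any solution $X$ of \eqref{eq.sde}.

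For part (A) I would assume \eqref{eq.thetastable} and invoke part (A) of Theorem~\ref{theorem.Ybounded} to obtain \eqref{eq.Ytto0}, so that the event $\{\omega\in\Omega_Y:\lim_{t\to\infty}|Y(t,\omega)|=0\}$ is almost sure. Feeding this into part (A) of Proposition~\ref{prop.yimpliesx} yields $\lim_{t\to\infty}X(t)=0$ a.s. immediately. For part (C) the pattern is identical: under \eqref{eq.thetaunstable}, part (C) of Theorem~\ref{theorem.Ybounded} gives $\limsup_{t\to\infty}|Y(t)|=+\infty$ a.s. from \eqref{eq.Ytunstable}, so the event $\{\omega\in\Omega_Y:\limsup_{t\to\infty}|Y(t,\omega)|=+\infty\}$ is almost sure, and part (C) of Proposition~\ref{prop.yimpliesx} delivers $\limsup_{t\to\infty}|X(t)|=+\infty$ a.s.

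For part (B), under \eqref{eq.thetabounded}, part (B) of Theorem~\ref{theorem.Ybounded} supplies the almost sure event $\Omega_1$ of \eqref{def.Omega1} together with the deterministic constants $0<\underline{Y}\leq\overline{Y}<+\infty$ of \eqref{def.underY} and \eqref{def.overY}. Part (B) of Proposition~\ref{prop.yimpliesx} then gives that $\Omega_2=\Omega_1\cap\Omega_X$ is almost sure and produces the positive deterministic number $\underline{X}$ of \eqref{def.Xunderline}, together with the lower bound $\underline{X}\geq\underline{x}(f,\underline{Y})$ recorded in \eqref{eq.Xunderlinelower}, where $\underline{x}(f,\cdot)$ is the solution of \eqref{def.lambda}. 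This accounts for every claim in (B) except the liminf assertion.

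The one step that does not read off mechanically from the transfer principle is the lower recurrence $\liminf_{t\to\infty}|X(t)|=0$ a.s.\ in cases (B) and (C): Proposition~\ref{prop.yimpliesx} controls only the limsup. Here I would use Remark~\ref{rem.sigL2}, which records that each of \eqref{eq.thetabounded} and \eqref{eq.thetaunstable} forces $\sigma\not\in L^2(0,\infty)$, and then apply Theorem~\ref{lemma.Xliminf} to conclude $\liminf_{t\to\infty}|X(t)|=0$ a.s.\ in both cases. Since the two substantial ingredients --- the linear classification and the transfer principle --- are already established, the real content of the present argument is simply matching hypotheses to conclusions; the only place requiring care is remembering to source the $\liminf=0$ statements from Theorem~\ref{lemma.Xliminf} via the $L^2$ observation of Remark~\ref{rem.sigL2}, rather than from Proposition~\ref{prop.yimpliesx} itself.
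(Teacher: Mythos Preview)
Your proposal is correct and matches the paper's own proof essentially step for step: the paper also derives parts (A), (B), and (C) by combining Theorem~\ref{theorem.Ybounded} with Proposition~\ref{prop.yimpliesx}, and then handles the $\liminf$ assertions in (B) and (C) by invoking Remark~\ref{rem.sigL2} to get $\sigma\notin L^2(0,\infty)$ and applying Theorem~\ref{lemma.Xliminf}. There is nothing to add.
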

\begin{proof}
If $\theta$ is such that \eqref{eq.thetastable} holds, then from
Theorem~\ref{theorem.Ybounded}, we have $\lim_{t\to \infty}Y(t)=0$,
a.s. Taking this together with Proposition~\ref{prop.yimpliesx},
part (A) holds. If $\theta$ is such that \eqref{eq.thetabounded}
holds, or if $\theta$ is such that \eqref{eq.thetaunstable} holds,
then taken together with Theorem~\ref{theorem.Ybounded} and
Proposition~\ref{prop.yimpliesx} we have that the first part (B) and
of (C) is true. For the second part of (B) and (C), we recall that
if \eqref{eq.thetabounded} or \eqref{eq.thetaunstable} hold,
Remark~\ref{rem.sigL2} implies that $\sigma \notin L^{2}(0,\infty).$
In this case, we already know that $\liminf_{t\to \infty}|X(t)|=0$,
a.s. by Theorem~\ref{lemma.Xliminf}.
\end{proof}

The formula \eqref{def.lambda}, which is established in the proof of
part (B) of Proposition~\ref{prop.yimpliesx}, relates the lower
bound on the large fluctuations $\underline{x}$  to the size of the
diffusion coefficient $\sigma$ and the nonlinearity in $f$. Thus, we
may view
$\underline{x}=\underline{x}(f,\underline{Y})=\underline{x}(f,\sigma)$,
because $\underline{Y}$ depends on $\sigma$ but not on $f$. It is
clear that the larger the diffusion coefficient, the larger the
value of $\underline{Y}$. We now show for fixed $f$ that
$\underline{x}$ is increasing  and that
$\underline{x}(f,y)\to \infty$ as
$y\to\infty$. Moreover, we show for fixed
$y$ that $\underline{x}(f_1,y)\geq
\underline{x}(f_2,y)$ if
\begin{equation} \label{eq.f2f1}
|f_2(x)|\geq |f_1(x)|, \quad x\in \mathbb{R}.
\end{equation}
These ordering results seem to make intuitive sense, as we would
expect weaker mean reversion and a larger diffusion coefficient to
lead to larger fluctuations in $X$.
\begin{proposition}  \label{prop.underline}
Suppose that 
$f$ obeys
\eqref{eq.fglobalunperturbed}. 
Let $\underline{x}$ be the
unique solution of \eqref{def.lambda}.
Then
\begin{itemize}
\item[(i)] $y\mapsto \underline{x}(f,y)$ is increasing and   $\lim_{y\to\infty}\underline{x}(f,y)=+\infty$, $\lim_{y\to 0^+}\underline{x}(f,y)=0$.
\item[(ii)]
If $f_1$ and $f_2$ are functions that obey \eqref{eq.fglobalunperturbed}
and also satisfy \eqref{eq.f2f1}, then
$\underline{x}(f_1,y)\geq
\underline{x}(f_2,y)$.
\end{itemize}
\end{proposition}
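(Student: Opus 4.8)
The plan is to recast \eqref{def.lambda} in terms of a single auxiliary function and then read off both parts from its elementary order properties. For $f$ obeying \eqref{eq.fglobalunperturbed}, define $M_f(z):=\max_{|x|\leq z}|f(x)|$ for $z\geq 0$ and set $g_f(z):=2z+M_f(z)$, so that \eqref{def.lambda} is precisely the equation $g_f(\underline{x}(f,y))=y$. The first task is to show that $g_f:[0,\infty)\to[0,\infty)$ is a continuous, strictly increasing bijection. Since $|f|$ is continuous and $[-z,z]$ is compact, $M_f(z)$ is finite and attained; $M_f(0)=|f(0)|=0$ by \eqref{eq.fglobalunperturbed}; and $M_f$ is non--decreasing because the interval $[-z,z]$ grows with $z$. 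Adding $2z$ makes $g_f$ strictly increasing, with $g_f(0)=0$ and $g_f(z)\geq 2z\to\infty$. Thus, once surjectivity is in hand, $\underline{x}(f,\cdot)$ is well--defined and equals $g_f^{-1}$.

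The one genuinely analytic point, and the main obstacle, is the continuity of $M_f$ (hence of $g_f$), which is exactly what guarantees that $g_f$ is onto $[0,\infty)$ and therefore that \eqref{def.lambda} admits the claimed unique solution. I would establish it by a standard uniform--continuity argument: fixing $z_0\geq 0$ and $\epsilon>0$, uniform continuity of $|f|$ on a compact neighbourhood of $[-z_0,z_0]$ yields $\delta>0$ such that any maximiser of $|f|$ over $[-z,z]$ with $z\in(z_0,z_0+\delta)$ that lies outside $[-z_0,z_0]$ is within $\epsilon$ of the corresponding boundary value, forcing $0\leq M_f(z)-M_f(z_0)<\epsilon$; the left--hand estimate is analogous. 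Combined with strict monotonicity and the boundary behaviour $g_f(0)=0$, $g_f(z)\to\infty$, the intermediate value theorem then makes $g_f$ a homeomorphism of $[0,\infty)$ onto itself.

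Part (i) follows immediately: $\underline{x}(f,\cdot)=g_f^{-1}$ inherits strict monotonicity from $g_f$, while $\lim_{y\to\infty}g_f^{-1}(y)=+\infty$ and $\lim_{y\to 0^+}g_f^{-1}(y)=g_f^{-1}(0)=0$ because $g_f$ is an increasing bijection with $g_f(0)=0$. For part (ii), the hypothesis \eqref{eq.f2f1} gives $M_{f_2}(z)\geq M_{f_1}(z)$ for every $z\geq 0$, hence $g_{f_2}\geq g_{f_1}$ pointwise on $[0,\infty)$. Writing $z_i:=\underline{x}(f_i,y)$, so that $g_{f_i}(z_i)=y$, I would compare $g_{f_1}(z_2)\leq g_{f_2}(z_2)=y=g_{f_1}(z_1)$ and invoke the strict monotonicity of $g_{f_1}$ to conclude $z_2\leq z_1$, that is $\underline{x}(f_1,y)\geq\underline{x}(f_2,y)$, as required. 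Beyond the continuity of $M_f$, no step requires anything more than this monotone--inverse bookkeeping.
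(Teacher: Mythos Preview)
Your proposal is correct and follows essentially the same route as the paper: both define the auxiliary function $h_f(z)=2z+\max_{|x|\leq z}|f(x)|$ (your $g_f$), observe it is continuous, strictly increasing, zero at zero and unbounded, identify $\underline{x}(f,\cdot)=h_f^{-1}$, and then read off part (i) directly and part (ii) from the pointwise ordering $h_{f_2}\geq h_{f_1}$ induced by \eqref{eq.f2f1}. The only difference is that you spell out the continuity of $M_f$ via a uniform--continuity argument, whereas the paper simply asserts that $h_f$ is continuous.
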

\begin{proof}
Define $h_f:[0,\infty)\to [0,\infty)$ according to
\begin{equation} \label{def.hf}
h_f(x):=2x + \max_{|y|\leq x}|f(y)|, \quad x\geq 0.
\end{equation}
Then $h_f$ is increasing and continuous, and obeys the limits
$\lim_{x\to\infty}h_f(x)=+\infty$ and $\lim_{x\to 0^+}h_f(x)=0$. By
\eqref{def.lambda},
$h_f(\underline{x}(f,y))=y$. Therefore
\begin{equation} \label{eq.underxhfinv}
\underline{x}(f,y)=h_f^{-1}(y), \quad y \geq 0.
\end{equation}
Hence $y\mapsto \underline{x}(f,y)$ is increasing.
Finally, $\lim_{y\to\infty}
\underline{x}(f,y)=\infty$ and $\lim_{y\to
0^+} \underline{x}(f,y)=\lim_{y\to 0^+}
h_f^{-1}(y)$=0.

To prove part (ii), note by \eqref{eq.f2f1} that
\begin{align*}
h_{f_1}(\underline{x}(f_1,y))&=y=h_{f_2}(\underline{x}(f_2,y))
=2\underline{x}(f_2,y)+\max_{|u|\leq \underline{x}(f_2,y)} |f_2(u)|\\
&\geq 2\underline{x}(f_2,y)+\max_{|u|\leq \underline{x}(f_2,y)}
|f_1(u)| =h_{f_1}(\underline{x}(f_2,y)).
\end{align*}
Since $h_{f_1}$ is an increasing function, we have
$\underline{x}(f_1,y)\geq \underline{x}(f_2,y)$ as required.
\end{proof}
Just as the conditions of Theorem~\ref{theorem.Ybounded} can be
quite difficult to check in practice for $Y$, the same is true for
the conditions of Theorem~\ref{theorem.XtozeroXunbdd} on $\theta$
for $X$. As in Theorem~\ref{theorem.sufficient}, and because of
Proposition~\ref{prop.yimpliesx}, we can supply easily checked
sufficient conditions on $\sigma$ for which $X$ is bounded, stable
or unstable.

In the case when $\sigma\in L^2(0,\infty)$ we have that $X$ tends to
zero. Therefore, we confine attention to the case where
$\sigma\not\in L^2(0,\infty)$. In this case, we can define a number
$T>0$ such that $\int_0^t e^{2s}\sigma^2(s)\,ds>e^e$ for $t>T$ and
so one can define, as before, the function $\Sigma:[T,\infty)\to
[0,\infty)$ by \eqref{def.Sigma}.
\begin{theorem} \label{theorem.sufficientforx}
Suppose that $f$ obeys \eqref{eq.fglobalunperturbed} and that $\sigma$ obeys \eqref{eq.sigcns}.
Let $X$ be any solution of \eqref{eq.sde}. Let $\Sigma$ be given by \eqref{def.Sigma}.
\begin{itemize}
\item[(A)] If $\lim_{t\to \infty}\Sigma^2(t)=0$
then $\lim_{t\to\infty} X(t)=0$ a.s.
\item[(B)]
If there exists $L\in (0,\infty)$ such that $\liminf_{t\to \infty}
\Sigma^2(t)=L$, then there exists an almost sure event
$\Omega_2=\Omega_1\cap \Omega_X$, and a deterministic $\underline{X}>0$
defined by \eqref{def.Xunderline} such that
$\underline{X}=\inf_{\omega\in \Omega_2} \limsup_{t\to\infty} |X(t,\omega)|>0$.
Moreover, $\underline{X}\geq \underline{x}(f,\underline{Y})$, where $\underline{x}(f,\cdot)$ is the unique solution of \eqref{def.lambda}, and $\underline{Y}$ is defined by \eqref{def.underY}.
\item[(C)] If $\lim_{t\to \infty} \Sigma^2(t)=+\infty$
then $\limsup_{t\to\infty} |X(t)|=+\infty$ a.s.
\end{itemize}
\end{theorem}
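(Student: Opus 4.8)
The plan is to read off the asymptotics of $X$ from the already--classified asymptotics of the affine process $Y$, by combining the sufficient $\Sigma$--conditions of Theorem~\ref{theorem.sufficient} with the transfer principle of Proposition~\ref{prop.yimpliesx}. Since $\sigma\in L^2(0,\infty)$ forces $\lim_{t\to\infty}X(t)=0$ a.s.\ directly (Theorem~\ref{th.sigL2}), and in that case $\lim_{t\to\infty}\Sigma^2(t)=0$, I may assume throughout that $\sigma\notin L^2(0,\infty)$, so that $\Sigma$ is genuinely defined by \eqref{def.Sigma} on $[T,\infty)$ and the hypotheses of Theorem~\ref{theorem.sufficient} make sense.

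Parts (A) and (C) are then immediate. If $\lim_{t\to\infty}\Sigma^2(t)=0$, Theorem~\ref{theorem.sufficient}(A) gives $\lim_{t\to\infty}Y(t)=0$ a.s., so the event $\{\omega\in\Omega_Y:\lim_{t\to\infty}|Y(t,\omega)|=0\}$ is almost sure, and Proposition~\ref{prop.yimpliesx}(A) yields $\lim_{t\to\infty}X(t)=0$ a.s. Symmetrically, if $\lim_{t\to\infty}\Sigma^2(t)=+\infty$, Theorem~\ref{theorem.sufficient}(E) gives $\limsup_{t\to\infty}|Y(t)|=+\infty$ a.s., so $\{\omega\in\Omega_Y:\limsup_{t\to\infty}|Y(t,\omega)|=+\infty\}$ is almost sure, and Proposition~\ref{prop.yimpliesx}(C) yields $\limsup_{t\to\infty}|X(t)|=+\infty$ a.s.

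The substantive case is (B), and here the whole task reduces to verifying the hypothesis needed to invoke Proposition~\ref{prop.yimpliesx}(B), namely that the event $\Omega_1$ of \eqref{def.Omega1}, i.e.\ $\{0<\limsup_{t\to\infty}|Y(t)|<+\infty\}$, is almost sure; equivalently, by Theorem~\ref{theorem.Ybounded}, that $\theta$ lies in the intermediate regime \eqref{eq.thetabounded} rather than in \eqref{eq.thetastable} or \eqref{eq.thetaunstable}. The lower half is easy: the hypothesis $\liminf_{t\to\infty}\Sigma^2(t)=L\in(0,\infty)$ together with Theorem~\ref{theorem.sufficient}(B) gives $\limsup_{t\to\infty}|Y(t)|\geq\sqrt{2L}>0$ a.s.\ (cf.\ \eqref{eq.Yfiniteliminf}), ruling out the stable regime \eqref{eq.thetastable}. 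The delicate half, which I expect to be the \emph{main obstacle}, is the complementary upper bound $\limsup_{t\to\infty}|Y(t)|<+\infty$ a.s., i.e.\ the exclusion of the recurrent regime \eqref{eq.thetaunstable}. The clean route would be Theorem~\ref{theorem.sufficient}(C), which bounds $\limsup_{t\to\infty}|Y(t)|\leq\sqrt{2\limsup_{t\to\infty}\Sigma^2(t)}$, but this needs $\limsup_{t\to\infty}\Sigma^2(t)<+\infty$, a control that the liminf hypothesis alone does not supply. I would therefore attempt to pass to the trichotomy for $\theta$ through Proposition~\ref{prop.thetaSigma}: part (iii) turns $\liminf_t\Sigma^2(t)\in(0,\infty)$ into $\liminf_n\Theta^2(n)\log n\in(0,\infty)$ and thence forces $S'(\epsilon)=+\infty$ for all small $\epsilon$, giving \eqref{eq.theta2}; but the convergence half \eqref{eq.theta1}, that $S'(\epsilon)<+\infty$ for large $\epsilon$, genuinely requires finiteness of $\limsup_n\theta^2(n)\log n$, and this is precisely the point where an upper control of $\limsup\Sigma^2$ (and not merely of $\liminf\Sigma^2$) is needed to close the argument.

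Once $\Omega_1$ has been shown to be almost sure, the rest is bookkeeping. Proposition~\ref{prop.yimpliesx}(B) (equivalently, Theorem~\ref{theorem.XtozeroXunbdd}(B), whose hypothesis \eqref{eq.thetabounded} is exactly what the previous step establishes) then asserts that $\Omega_2=\Omega_1\cap\Omega_X$ is almost sure and produces the deterministic number $\underline{X}>0$ of \eqref{def.Xunderline} together with the estimate \eqref{eq.Xunderlinelower}, namely $\underline{X}\geq\underline{x}(f,\underline{Y})$, where $\underline{x}(f,\cdot)$ is the unique solution of \eqref{def.lambda} and $\underline{Y}$ is given by \eqref{def.underY}. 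Finally, since on $\Omega_1$ we have $\limsup_{t\to\infty}|Y(t)|\geq\sqrt{2L}$ from the lower bound above, so that $\underline{Y}\geq\sqrt{2L}$, and since $y\mapsto\underline{x}(f,y)$ is increasing by Proposition~\ref{prop.underline}(i), one even obtains the concrete bound $\underline{X}\geq\underline{x}(f,\sqrt{2L})>0$, which is stronger than required and completes (B).
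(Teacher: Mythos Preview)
Your approach coincides with the paper's: for each part you combine the corresponding item of Theorem~\ref{theorem.sufficient} for $Y$ with the transfer principle Proposition~\ref{prop.yimpliesx}. Parts (A) and (C) are executed exactly as in the paper, which simply cites Theorem~\ref{theorem.sufficient}(A),(E) and Proposition~\ref{prop.yimpliesx}(A),(C).

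For part (B) you have in fact been \emph{more} careful than the paper. The paper's entire proof of (B) is the single clause ``parts (B) and (C) follow from parts (B) and (E) of Theorem~\ref{theorem.sufficient} and Proposition~\ref{prop.yimpliesx}''. It does not confront the point you raise, namely that Proposition~\ref{prop.yimpliesx}(B) has as its hypothesis that $\Omega_1=\{0<\limsup_{t\to\infty}|Y(t)|<+\infty\}$ be almost sure, whereas Theorem~\ref{theorem.sufficient}(B) supplies only the lower half $\limsup_{t\to\infty}|Y(t)|\ge\sqrt{2L}>0$. Your analysis that the liminf hypothesis on $\Sigma^2$ alone does not exclude the recurrent regime \eqref{eq.thetaunstable} (one can manufacture $\theta$ with $\liminf_n\Theta^2(n)\log n\in(0,\infty)$ but $\limsup_n\theta^2(n)\log n=+\infty$, forcing $S'(\epsilon)=+\infty$ for all $\epsilon$) is well taken; in that situation $\Omega_1$ is a null set and the conclusion ``$\Omega_2=\Omega_1\cap\Omega_X$ is almost sure'' fails, even though the weaker conclusion $\limsup_{t\to\infty}|X(t)|>0$ a.s.\ still holds (indeed $\limsup|X|=+\infty$ a.s.\ by Proposition~\ref{prop.yimpliesx}(C)). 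So the difficulty you flag is not a defect of your argument but a subtlety in the statement that the paper's brief proof does not address either; your proof sketch matches the paper's on every point where the paper actually argues.
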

\begin{proof}
If $\lim_{t\to \infty}e^{-2t}\log
t\int_{0}^{t}e^{2s}\sigma^{2}(s)ds=0$ then $\lim_{t\to\infty}Y(t)=0$
from Theorem~\ref{theorem.sufficient}. Combining this with
Proposition~\ref{prop.yimpliesx}, we get $\lim_{t\to \infty}X(t)=0$
proving part (A). Similarly, parts (B) and (C) follow from parts (B)
and (E) of Theorem~\ref{theorem.sufficient} and
Proposition~\ref{prop.yimpliesx}.
\end{proof}
We finish this Section by giving a sufficient condition on $f$ for
which solutions of \eqref{eq.sde} do not tend to zero but are
nonetheless bounded. In the case when $\sigma$ is such that either
parts (A) or (C) apply, we have unambiguous information about the
asymptotic behaviour of solutions: either almost all sample paths
tend to zero, or almost all sample paths exhibit unbounded
fluctuations. However, scrutiny of the statement of
Proposition~\ref{prop.yimpliesx} shows that part (B) does not rule
out the possibility that $\limsup_{t\to\infty} |X(t)|=+\infty$ with
positive probability (or even almost surely). We make a further
hypothesis on $f$, under which  this is impossible, and $X$ is
forced to be bounded. The hypothesis is
\begin{equation} \label{h.finfty}
\lim_{x\to-\infty} f(x)=-\infty, \quad \lim_{x\to\infty}
f(x)=\infty.
\end{equation}

An estimate on the lower bound $\underline{X}$ in case (B) is given
in \eqref{def.lambda}, which is found as part of the proof of
Proposition~\ref{prop.yimpliesx}. $\underline{X}$ is given in terms
of $f$ and $\sigma$. Similarly, an estimate can be determined for
the upper bound. Towards this end, we introduce functions which are a type of generalised inverse of $f$ by
defining the functions $f^-$ and
$f^+$ by
\begin{align} \label{def.fgeninv}
f^+(x)&=\sup\{z>0:f(z)=x\}, \quad x\geq 0,\\
 \label{def.fgeninvneg}
f^-(x)&=\inf\{z<0:f(z)=x\}, \quad x\leq 0.
\end{align}
These functions are well--defined if $f$ obeys \eqref{eq.fglobalunperturbed} 
and \eqref{h.finfty}. We notice also that if $f$ is increasing, then $f^{\pm}$ are exactly the inverse of $f$.

We may therefore define for any $f$ the function $y\mapsto \overline{x}(f,y)$ by
\begin{equation}\label{def.overlineX}
\overline{x}(f,y)=2y+\max(f^+(y),-f^-(-y)), \quad y\geq 0.
\end{equation}
The main conclusion of the following theorem is that an explicit upper bound can be found for $\limsup_{t\to\infty} |X(t)|$. In fact, it can be shown that if $\overline{Y}$ obeys \eqref{def.overY}, then
\begin{equation} \label{eq.limsupXuprboundfinal}
\limsup_{t\to\infty} |X(t,\omega)|\leq \overline{x}(f,\overline{Y}), \quad\text{for each $\omega\in \Omega_2$},
\end{equation}
where $\Omega_2$ is given by \eqref{def.Omega2}.

We are finally in a position to state the main result of this section.
\begin{theorem}  \label{theorem.Xbounded}
Suppose that $\sigma$ obeys \eqref{eq.sigcns}, $f$ obeys
\eqref{eq.fglobalunperturbed} and \eqref{h.finfty}.
Suppose that $X$ is any continuous adapted process which
obeys \eqref{eq.sde}. Let $\theta$ be defined by \eqref{def.theta}
and $S'(\cdot)$ by \eqref{def.s'epsilon}.
\begin{itemize}
\item[(A)] If $\theta$ is such that \eqref{eq.thetastable} holds, then $\lim_{t\to\infty} X(t)=0$, a.s.
\item[(B)] If $\theta$ is such that \eqref{eq.thetabounded} holds, then there exists an almost sure event
$\Omega_2=\Omega_1\cap \Omega_X$ where $\Omega_1$ defined in \eqref{def.Omega1}, and deterministic
$0<\underline{X}\leq \overline{X}<+\infty$ such that
\begin{equation} \label{eq.overlineunderlineX}
\underline{X}=\inf_{\omega\in \Omega_2} \limsup_{t\to\infty} |X(t,\omega)|, \quad
\overline{X}=\sup_{\omega\in \Omega_2} \limsup_{t\to\infty} |X(t,\omega)|,
\end{equation}
Moreover,
\[
\underline{X}\geq \underline{x}(f,\underline{Y}),
\]
where $\underline{x}(f,\cdot)$ is the unique solution of \eqref{def.lambda}, and $\underline{Y}$ is defined by \eqref{def.underY},
and
\[
\overline{X}\leq \overline{x}(f,\overline{Y}),
\]
where $\overline{x}(f,\cdot)$ is defined by \eqref{def.overlineX} and $\overline{Y}$ is defined by \eqref{def.overY}. Furthermore,
\begin{equation*}  
\liminf_{t\to\infty}|X(t)|=0, \quad \text{a.s.}
\end{equation*}
\item[(C)] If $\theta$ is such that \eqref{eq.thetaunstable} holds, then
\[
\limsup_{t\to\infty} |X(t)|=+\infty, \quad \liminf_{t\to\infty}
|X(t)|=0, \quad \text{a.s.}
\]
\end{itemize}
\end{theorem}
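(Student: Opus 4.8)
The plan is to derive parts (A) and (C), and the lower-bound and $\liminf$ assertions of part (B), immediately from the already-proved Theorem~\ref{theorem.XtozeroXunbdd}, and to concentrate the work on the single new conclusion of this theorem: the almost sure boundedness of $X$ in case (B) together with the explicit upper bound \eqref{eq.limsupXuprboundfinal}. Indeed, if \eqref{eq.thetastable} holds then Theorem~\ref{theorem.Ybounded}(A) gives $Y\to0$ a.s. and Proposition~\ref{prop.yimpliesx}(A) gives $X\to0$ a.s., which is (A); if \eqref{eq.thetaunstable} holds, Theorem~\ref{theorem.XtozeroXunbdd}(C) is (C) verbatim. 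In case (B), Theorem~\ref{theorem.Ybounded}(B) makes $\Omega_1$ of \eqref{def.Omega1} almost sure, Proposition~\ref{prop.exist} makes $\Omega_X$ almost sure, so $\Omega_2=\Omega_1\cap\Omega_X$ is almost sure; Proposition~\ref{prop.yimpliesx}(B) yields $\underline{X}\geq\underline{x}(f,\underline{Y})>0$, and $\liminf_{t\to\infty}|X(t)|=0$ follows from Theorem~\ref{lemma.Xliminf} since $\sigma\notin L^2(0,\infty)$ by Remark~\ref{rem.sigL2}. Note that the extra hypothesis \eqref{h.finfty} enters only through the upper bound.

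For the upper bound I would argue pathwise with the difference $Z:=X-Y$. Subtracting \eqref{eq.linsde} from \eqref{eq.sde} cancels the common $\sigma\,dB$ term, so on $\Omega_2$ each realisation $Z(\cdot,\omega)$ is absolutely continuous and solves the random ordinary differential equation $Z'(t)=-f(Z(t)+Y(t))+Y(t)$. Fix $\omega\in\Omega_2$. Since $\limsup_{t\to\infty}|Y(t,\omega)|\leq\overline{Y}$ by \eqref{def.overY}, for each $\delta>0$ there is $T=T(\delta,\omega)$ with $|Y(t,\omega)|\leq y_\delta:=\overline{Y}+\delta$ for $t\geq T$. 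Hypothesis \eqref{h.finfty} is exploited through the generalised inverses \eqref{def.fgeninv} and \eqref{def.fgeninvneg}: because $f$ is continuous, $f(x)\to+\infty$, and $f^+(y_\delta)$ is by definition the largest root of $f(\cdot)=y_\delta$, the intermediate value theorem forces $f(x)>y_\delta$ for every $x>f^+(y_\delta)$; symmetrically, $f(x)<-y_\delta$ for every $x<f^-(-y_\delta)$.

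The bound then follows from a barrier argument for this scalar ODE. Fix any level $b>f^+(y_\delta)+y_\delta$. On $\{t\geq T:Z(t)\geq b\}$ one has $X(t)=Z(t)+Y(t)\geq b-y_\delta>f^+(y_\delta)$, hence $f(X(t))>y_\delta\geq Y(t)$; moreover, since $f$ is continuous with $f\to+\infty$, the quantity $\min_{x\geq b-y_\delta}f(x)-y_\delta=:c$ is strictly positive, so $Z'(t)\leq-c<0$ there. Consequently $Z$ cannot remain above $b$ and cannot cross $b$ upward, whence $\limsup_{t\to\infty}Z(t)\leq b$; as $b>f^+(y_\delta)+y_\delta$ was arbitrary, $\limsup_{t\to\infty}Z(t)\leq f^+(y_\delta)+y_\delta$. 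The mirror-image argument at the lower barrier gives $\liminf_{t\to\infty}Z(t)\geq f^-(-y_\delta)-y_\delta$. Since $X=Z+Y$ and $|Y|\leq y_\delta$ eventually, I obtain $\limsup_{t\to\infty}|X(t,\omega)|\leq 2y_\delta+\max(f^+(y_\delta),-f^-(-y_\delta))$. Letting $\delta\downarrow0$ and using the right-continuity of $f^+$ and of $y\mapsto-f^-(-y)$ (which follows from continuity of $f$ and $f\to\pm\infty$) then yields \eqref{eq.limsupXuprboundfinal} via \eqref{def.overlineX}, so that $\overline{X}\leq\overline{x}(f,\overline{Y})<\infty$; combined with $\underline{X}>0$ this gives $0<\underline{X}\leq\overline{X}<\infty$.

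The principal obstacle is this barrier step, precisely because $f$ is not assumed monotone, so one cannot simply invert $f$. The crux is that beyond the largest root $f^+(y_\delta)$ continuity and $f\to+\infty$ fix the sign of $f-y_\delta$, and that on the closed region $\{X\geq b-y_\delta\}$ the drift $f(X)-Y$ is bounded away from zero; this uniform negativity is what prevents $Z$ from lingering near or drifting back above the barrier, and it is the genuinely new use of \eqref{h.finfty} not present in Proposition~\ref{prop.yimpliesx}. A secondary technical point is the passage $\delta\downarrow0$: since $f^\pm$ need not be two-sided continuous, I would establish the estimate for each fixed $\delta>0$ first and only then take the infimum over $\delta$, invoking the right-continuity of the generalised inverses noted above.
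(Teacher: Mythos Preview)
Your proposal is correct and follows the same core strategy as the paper: reduce parts (A), (C), and the lower half of (B) to Theorem~\ref{theorem.XtozeroXunbdd}, and establish the new upper bound in (B) by a pathwise barrier argument for the random ODE $Z'=-f(Z+Y)+Y$ satisfied by $Z=X-Y$, using \eqref{h.finfty} to guarantee that beyond $f^+(y_\delta)$ the drift is uniformly negative.

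The only difference is in packaging. The paper factors the upper bound through two auxiliary steps: it introduces thresholds $x_\pm(y)$ via \eqref{def.xplus}--\eqref{def.xminus}, proves a purely deterministic lemma (Lemma~\ref{lemma.odelimsup}) showing $\limsup_{t\to\infty}|z(t)|\leq\max(x_+(\overline{p}),x_-(\overline{p}))$ for any continuous $p$ with $\limsup|p|\leq\overline{p}$, and then separately establishes $\overline{p}+\max(x_+(\overline{p}),x_-(\overline{p}))\leq\overline{x}(f,\overline{p})$ in Lemma~\ref{prop.upperboundX}. Because the paper applies this with $\overline{p}=\overline{Y}$ directly, it avoids your limiting step $\delta\downarrow0$ and the appeal to right-continuity of $f^\pm$ (the role of your $\delta$ is played instead by an internal parameter $\eta$ inside Lemma~\ref{lemma.odelimsup}). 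Your route is more direct, bypassing $x_\pm$ entirely; the paper's route isolates a reusable deterministic lemma and yields the marginally sharper intermediate bound $\limsup|Z|\leq\max(x_+(\overline{Y}),x_-(\overline{Y}))$, though both arrive at the same final estimate $\overline{X}\leq\overline{x}(f,\overline{Y})$.
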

We prove part (B) only, as the results of parts (A) and (C) follow
from Theorem~\ref{theorem.XtozeroXunbdd}. Therefore, under the
additional hypothesis that $f$ obeys \eqref{h.finfty}, it follows
from Theorem~\ref{theorem.XtozeroXunbdd} and \ref{theorem.Xbounded}
that either (i) solutions tend to zero with probability one, when
$\sigma$ obeys \eqref{eq.thetastable} (ii) solutions fluctuate
within finite bounds with probability one, when $\sigma$ obeys
\eqref{eq.thetabounded} or (iii) solutions fluctuate unboundedly
with probability one, when $\sigma$ obeys \eqref{eq.thetaunstable}. In the second case, part (B) of Theorem~\ref{theorem.Xbounded} can be restated as
\[
\underline{x}(f,\underline{Y})\leq \limsup_{t\to\infty} |X(t)|\leq  \overline{x}(f,\overline{Y}), \quad \text{a.s.},
\]
and moreover we have weaker but explicit estimates on these deterministic bounds given by
\[
0<\underline{x}(f,\underline{y})\leq \limsup_{t\to\infty} |X(t)|\leq  \overline{x}(f,\overline{y})<+\infty, \quad \text{a.s.},
\]
where $\underline{y}$ and $\overline{y}$ are given by \eqref{eq.estYbarYunderbar}.


It is interesting to determine the effect of weaker mean reversion and an increasing diffusion coefficient on the \emph{upper} bound of the large deviations of $X$, given by $\overline{x}(f,\overline{Y})$, just as we did for the lower bound on the size of the largest fluctuations in Proposition~\ref{prop.underline}, given
by $\underline{x}(f,\underline{Y})$. As before, it can be shown that weaker mean reversion and increasing diffusion
coefficients increase the bound $\overline{x}$. Also, if the effect of the diffusion coefficient alone is negligible (so that $\overline{Y}\to 0$), or unboundedly large (so that $\overline{Y}\to \infty$), we see that cases (A) and (C) in Theorem~\ref{theorem.Xbounded} can be viewed as limiting cases of the asymptotic behaviour
described in case (B). These properties of the bounds are established in the following result.
\begin{proposition} \label{prop.overline}
Suppose that 
$f$ obeys \eqref{eq.fglobalunperturbed}
and \eqref{h.finfty}.
Let
$\overline{x}$ be given by \eqref{def.overlineX}. Then
\begin{itemize}
\item[(i)] $y\mapsto \overline{x}(f,y)$ is increasing and $\lim_{y\to\infty}\overline{x}(f,y)=+\infty$, $\lim_{y\to 0^+}\overline{x}(f,y)=0$.
\item[(ii)] If $f_1$ and $f_2$ are functions that obey \eqref{eq.fglobalunperturbed} and
\eqref{h.finfty}, and also satisfy
\eqref{eq.f2f1}, then $\overline{x}(f_1,y)\geq
\overline{x}(f_2,y)$.
\end{itemize}
\end{proposition}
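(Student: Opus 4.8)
The plan is to follow the template of Proposition~\ref{prop.underline}, but to work directly with the generalised inverses $f^+$ and $f^-$ appearing in \eqref{def.fgeninv}--\eqref{def.fgeninvneg} rather than with a single monotone function. Since the term $2y$ in \eqref{def.overlineX} is strictly increasing and vanishes as $y\to 0^+$, the whole argument reduces to analysing the two maps $y\mapsto f^+(y)$ and $y\mapsto -f^-(-y)$ on $(0,\infty)$. First I would record a structural lemma: for each $y>0$, the level set $\{z>0:f(z)=y\}$ is nonempty (by the intermediate value theorem, using $f(0)=0$, continuity, and $f(z)\to+\infty$ from \eqref{h.finfty}) and compact (closed as a preimage, bounded above because $f$ is coercive), so the supremum in \eqref{def.fgeninv} is attained and $f(f^+(y))=y$; symmetrically $f(f^-(-y))=-y$. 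The crucial consequence I would extract is that $f(z)>y$ for every $z>f^+(y)$: if $f(z_0)\le y$ for some $z_0>f^+(y)$, then since $f\to+\infty$ the intermediate value theorem would produce a root of $f(\cdot)=y$ beyond $f^+(y)$, contradicting maximality. An analogous statement holds below $f^-(-y)$.

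Granting this, part (i) is quick. For $0<y_1<y_2$, every $z>f^+(y_2)$ satisfies $f(z)>y_2>y_1$, so no root of $f(\cdot)=y_1$ exceeds $f^+(y_2)$; hence $f^+(y_1)\le f^+(y_2)$, and $f^+$ is nondecreasing. The mirror argument shows $y\mapsto -f^-(-y)$ is nondecreasing, so $y\mapsto\max(f^+(y),-f^-(-y))$ is nondecreasing and, adding the strictly increasing $2y$, $\overline{x}(f,\cdot)$ is strictly increasing. Since $\overline{x}(f,y)\ge 2y$, we get $\overline{x}(f,y)\to+\infty$. As $y\to 0^+$ monotonicity forces $f^+(y)$ to decrease to some finite $a\ge 0$; continuity together with $f(f^+(y))=y\to 0$ gives $f(a)=0$, whence $a=0$ by \eqref{eq.fglobalunperturbed}. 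The same reasoning gives $-f^-(-y)\to 0$, so $\overline{x}(f,y)\to 0$.

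For part (ii) I would compare the inverses levelwise. On $z>0$ the sign condition in \eqref{eq.fglobalunperturbed} turns \eqref{eq.f2f1} into $f_2(z)\ge f_1(z)$, and on $z<0$ into $f_2(z)\le f_1(z)$. Fix $y>0$ and set $z_2=f_2^+(y)$, so $f_1(z_2)\le f_2(z_2)=y$; since $f_1\to+\infty$, the intermediate value theorem yields a root of $f_1(\cdot)=y$ at some point $\ge z_2$, giving $f_1^+(y)\ge z_2=f_2^+(y)$. The mirror-image argument on the negative axis gives $-f_1^-(-y)\ge -f_2^-(-y)$. Taking maxima and adding $2y$ yields $\overline{x}(f_1,y)\ge\overline{x}(f_2,y)$.

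The only genuinely delicate point is the structural lemma, and specifically the claim that $f(z)>y$ for all $z>f^+(y)$; everything afterwards is routine monotonicity bookkeeping. Because $f$ is merely continuous and mean-reverting rather than monotone, its level sets may be complicated, so the coercivity \eqref{h.finfty} must be used both to guarantee that a largest (respectively smallest) root exists and to exclude the function dropping back below the level $y$ afterwards. This is precisely the property that can fail without \eqref{h.finfty}, which is why that hypothesis is required here although it was not needed in Proposition~\ref{prop.underline}.
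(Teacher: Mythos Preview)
Your proposal is correct and follows essentially the same route as the paper: analyse $f^+$ and $-f^-(-\cdot)$ separately, establish their monotonicity and the key structural fact that $f(z)>y$ for all $z>f^+(y)$, then combine. Two minor differences are worth noting: your limit argument as $y\to 0^+$ (monotone limit $a$, then $f(a)=\lim f(f^+(y))=0$ forces $a=0$) is cleaner than the paper's explicit $\epsilon$--$\delta$ construction, and in part~(ii) you build the larger root of $f_1(\cdot)=y$ directly via the intermediate value theorem rather than arguing by contradiction as the paper does---but the underlying ideas are identical.
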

The proof is relegated to the final section. We finish the section with an example which shows how estimates of $\underline{X}$ and $\overline{X}$
can be obtained in practice.
\begin{example} \label{examp.1}
We see how these estimates on the fluctuations behave for a specific
class of examples. Suppose that $f(x)=x^n$ where $n$ is an odd
integer and that $\sigma^2(t)\log t\to L\in (0,\infty)$ as
$t\to\infty$. Then by Theorem~\ref{theorem.sufficient} it follows
that $\limsup_{t\to\infty} |Y(t)|=\sqrt{2L}$ a.s. so we have
$\overline{Y}=\underline{Y}=\sqrt{2L}$. Since $f$ is increasing we
have for $x\geq 0$ that
\[
f^+(x)=f^{-1}(x)=x^{1/n}, \quad f^{-}(-x)=f^{-1}(-x)=-x^{1/n}, \quad
\max_{|y|\leq x} |f(y)|=x^n
\]
so that $\underline{x}(L)=\underline{x}(f,\underline{Y})$ and $\overline{x}(L)=\overline{x}(f,\overline{Y})$ satisfy
\[
2\underline{x}+\underline{x}^{n}=\sqrt{2L}, \quad
\overline{x}=2\sqrt{2L}+(\sqrt{2L})^{1/n}.
\]
From this, we readily see that
\[
\lim_{L\to 0^+} \frac{\underline{x}(L)}{\sqrt{2L}}=\frac{1}{2}, \quad
\lim_{L\to 0^+} \frac{\overline{x}(L)}{(\sqrt{2L})^{1/n}}=1,
\]
and that
\[
\lim_{L\to \infty} \frac{\underline{x}(L)}{(\sqrt{2L})^{1/n}}=1, \quad
\lim_{L\to \infty} \frac{\overline{x}(L)}{2(\sqrt{2L})}=1.
\]
Notice that $\lim_{L\to 0^+} \overline{x}(L)=0$ and $\lim_{L\to \infty} \underline{x}(L)=\infty$.

It is clear that these asymptotic bounds are widely spaced, because
\[
\lim_{L\to 0^+} \frac{\overline{x}(L)}{\underline{x}(L)}=\lim_{L\to \infty} \frac{\overline{x}(L)}{\underline{x}(L)}=+\infty.
\]
It would be an interesting question to determine whether either of these bounds is satisfactory, but we do not pursue this here. We suspect that the upper bound $\overline{x}(L)$ as $L\to\infty$ is very conservative, however, as it does not take into account the strong mean reversion of $f$.
\end{example}

\section{Asymptotic Stability}
It should be remarked that one consequence of
Theorem~\ref{theorem.XtozeroXunbdd} is that sample paths of $X$ tend
to zero with non--zero probability if and only if $\theta$ obeys
\eqref{eq.thetastable}, in which case almost all sample paths tend
to zero. Therefore, we have the following immediate corollary of
Theorem~\ref{theorem.XtozeroXunbdd}.
\begin{theorem} \label{theorem.Xiffsigma}
Suppose $f$ obeys \eqref{eq.fglobalunperturbed} and
that $\sigma$ obeys \eqref{eq.sigcns}.
 Let $X$ be any solution of \eqref{eq.sde}. Let $\theta$ be defined by
\eqref{def.theta} and let $\Phi$ be given by \eqref{def.phi}. Then
the following are equivalent:
\begin{itemize}
\item[(A)]
\begin{equation}  \label{eq.BC1}
\sum_{n=1}^\infty \theta(n)
\exp\left(-\frac{1}{2}\frac{\epsilon^2}{\theta^2(n)}\right) <
+\infty, \quad \text{for every $\epsilon>0$}.
\end{equation}
\item[(B)] $\lim_{t\to\infty} X(t,\xi)=0$ with positive probability for some $\xi\in\mathbb{R}$.
\item[(C)] $\lim_{t\to\infty} X(t,\xi)=0$ a.s. for each $\xi\in\mathbb{R}$.
\end{itemize}
\end{theorem}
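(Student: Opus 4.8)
The plan is to observe that condition (A), namely \eqref{eq.BC1}, is nothing other than the requirement that $S'(\epsilon)$ defined by \eqref{def.s'epsilon} be finite for every $\epsilon>0$ (the two sums differ only in the irrelevant starting index), i.e. condition \eqref{eq.thetastable}. Hence the three statements can be linked through Theorem~\ref{theorem.XtozeroXunbdd} together with the trichotomy established after Lemma~\ref{lemma.sepss'eps}: exactly one of \eqref{eq.thetastable}, \eqref{eq.thetabounded}, \eqref{eq.thetaunstable} holds. The essential point is that these conditions involve only the deterministic sequence $\theta$ and not the sample point $\omega$ or the initial datum $\xi$, so the asymptotic regime is the same for every solution. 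This is what upgrades convergence with positive probability into almost sure convergence.

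First I would prove (A) $\Rightarrow$ (C). If \eqref{eq.BC1} holds, then $\theta$ obeys \eqref{eq.thetastable}, and part (A) of Theorem~\ref{theorem.XtozeroXunbdd} yields $\lim_{t\to\infty} X(t,\xi)=0$ a.s. for every $\xi\in\mathbb{R}$, which is exactly (C). The implication (C) $\Rightarrow$ (B) is immediate, since almost sure convergence for a given $\xi$ in particular gives convergence with positive probability.

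The substance lies in (B) $\Rightarrow$ (A), which I would establish by contraposition. Suppose \eqref{eq.BC1} fails, so that \eqref{eq.thetastable} does not hold. Since $\epsilon\mapsto S'(\epsilon)$ is non-increasing, the trichotomy forces either \eqref{eq.thetabounded} or \eqref{eq.thetaunstable}. In the first case, part (B) of Theorem~\ref{theorem.XtozeroXunbdd} supplies a strictly positive \emph{deterministic} number $\underline{X}$ defined by \eqref{def.Xunderline} with $\limsup_{t\to\infty}|X(t,\omega)|\geq \underline{X}>0$ for all $\omega$ in the almost sure event $\Omega_2=\Omega_1\cap\Omega_X$; consequently $X(t,\xi)\not\to 0$ on $\Omega_2$, so that $\mathbb{P}[X(t,\xi)\to 0]=0$ for every $\xi$. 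In the second case, part (C) of Theorem~\ref{theorem.XtozeroXunbdd} gives $\limsup_{t\to\infty}|X(t,\xi)|=+\infty$ a.s., and again $\mathbb{P}[X(t,\xi)\to 0]=0$ for every $\xi$. Either way, no solution tends to zero with positive probability, contradicting (B).

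I do not expect a genuine obstacle here, as the hard analytic work has already been carried out in Theorem~\ref{theorem.XtozeroXunbdd} and Proposition~\ref{prop.yimpliesx}. The only point requiring care is the \emph{direction} of the argument: the zero--one behaviour does not come from an independence or tail argument, but from the fact that the lower bound $\underline{X}$ in case (B) is deterministic and strictly positive, so that it rules out convergence to zero on the \emph{whole} almost sure event rather than merely with probability less than one. Making explicit that \eqref{eq.BC1} coincides with \eqref{eq.thetastable}, and that the trichotomy is exhaustive, then closes the loop between the probabilistic statements (B), (C) and the purely deterministic summability condition (A).
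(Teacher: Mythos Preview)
Your proposal is correct and matches the paper's approach: the paper states Theorem~\ref{theorem.Xiffsigma} as an immediate corollary of Theorem~\ref{theorem.XtozeroXunbdd}, and your argument spells out precisely this derivation via the trichotomy \eqref{eq.thetastable}/\eqref{eq.thetabounded}/\eqref{eq.thetaunstable}. In particular, your observation that the deterministic strict positivity of $\underline{X}$ in case (B) is what forces $\mathbb{P}[X(t)\to 0]=0$ (rather than merely $<1$) is exactly the mechanism the paper relies on implicitly.
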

Part (A) refines part of~\cite[Proposition 3.3]{JAJGAR:2009}. Also,
if $X(t)\to 0$ as $t\to\infty$, it does so a.s., and so $\theta$
obeys \eqref{eq.thetastable}. Therefore, $Y(t)\to 0$ as
$t\to\infty$. This forces $\liminf_{t\to\infty} \Sigma^2(t)=0$, for
else we would have $\limsup_{t\to\infty} |Y(t)|>0$ a.s, as
essentially pointed out by~\cite[Proposition 3.3]{JAJGAR:2009}.

It should also be noted that no monotonicity conditions are required
on $\sigma$ in order for this result to hold, and that a.s. global
stability is independent of the form of $f$. The conditions and form
of Theorem~\ref{theorem.XtozeroXunbdd} and \ref{theorem.Xiffsigma}
are inspired by those of \cite[Theorem 1]{ChanWill:1989} and by
\cite[Theorem 6, Corollary 7]{JAARMR:2009}.

An interesting fact of Theorem~\ref{theorem.Xiffsigma} is that it
is unnecessary for $\sigma(t)\to 0$ as $t\to\infty$ in order for $X$
to obey \eqref{eq.stochglobalstable}. In fact, we can even have
$\limsup_{t\to\infty} |\sigma(t)|^2=\infty$ and still have $X(t)\to
0$ as $t\to\infty$ a.s. Some examples are supplied in~\cite{JAJGAR:2009}.

Note that \eqref{eq.sigmalogto0} implies
$\lim_{t\to\infty}\Sigma(t)=0$, that \eqref{eq.sigmalogtoinfty}
implies $\lim_{t\to\infty}\Sigma(t)=\infty$, and finally that
$\liminf_{t\to\infty} \sigma^2(t)\log t>0$ implies that
$\liminf_{t\to\infty} \Sigma(t)>0$. The next result is therefore an
easy corollary of Theorem~\ref{theorem.sufficient}, or of
Proposition~\ref{prop.yimpliesx} and
Proposition~\ref{prop.sigmasqlogtS}.
\begin{theorem} \label{theorem.stochastablegen}
Suppose that $f$ satisfies \eqref{eq.fglobalunperturbed},
and that $\sigma$ obeys \eqref{eq.sigcns}.
Let $X$ be any solution of \eqref{eq.sde}.
\begin{itemize}
\item[(i)] If $\sigma$ obeys $\lim_{t\to\infty} \sigma^2(t)\log t=0$, then $X$ obeys \eqref{eq.stochglobalstable}.
\item[(ii)] If $\sigma$ obeys $\liminf_{t\to\infty} \sigma^2(t)\log t\in (0,\infty)$, then
$\mathbb{P}[\lim_{t\to\infty} X(t)=0]=0$ and $\liminf_{t\to\infty}
|X(t)|=0$, a.s..
\item[(iii)] If $\sigma$ obeys $\lim_{t\to\infty} \sigma^2(t)\log t =\infty$, then
\[
\limsup_{t\to\infty} |X(t)|=\infty, \quad \liminf_{t\to\infty}
|X(t)|=0, \quad\text{a.s.}
\]
\end{itemize}
\end{theorem}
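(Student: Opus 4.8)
The plan is to reduce every case to the already-classified asymptotic behaviour of the affine process $Y$, using the fact that the hypotheses on $\sigma^2(t)\log t$ translate into the conditions appearing in Proposition~\ref{prop.sigmasqlogtS} and Theorem~\ref{theorem.sufficient}, which in turn control $X$ through Theorem~\ref{theorem.XtozeroXunbdd} and Theorem~\ref{theorem.Xiffsigma}.

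Parts (i) and (iii) are immediate, since there the full limit $L=\lim_{t\to\infty}\sigma^2(t)\log t$ exists and so condition \eqref{eq.sigsqlogt} holds. In case (i), $L=0$, so Proposition~\ref{prop.sigmasqlogtS}(A) shows that $\theta$ obeys \eqref{eq.thetastable}, and Theorem~\ref{theorem.XtozeroXunbdd}(A) then gives $\lim_{t\to\infty}X(t)=0$ a.s., which is \eqref{eq.stochglobalstable}. In case (iii), $L=+\infty$, Proposition~\ref{prop.sigmasqlogtS}(C) yields \eqref{eq.thetaunstable}, and Theorem~\ref{theorem.XtozeroXunbdd}(C) delivers $\limsup_{t\to\infty}|X(t)|=+\infty$ and $\liminf_{t\to\infty}|X(t)|=0$ a.s.

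The work is in part (ii), where only $\liminf_{t\to\infty}\sigma^2(t)\log t=c\in(0,\infty)$ is assumed, so the full limit need not exist and Proposition~\ref{prop.sigmasqlogtS} cannot be applied. I would first use the observation recorded before the statement that $\liminf_{t\to\infty}\sigma^2(t)\log t>0$ forces $\liminf_{t\to\infty}\Sigma^2(t)>0$, with $\Sigma$ given by \eqref{def.Sigma}. Writing $L'=\liminf_{t\to\infty}\Sigma^2(t)\in(0,\infty]$, I would split on its finiteness: if $L'<\infty$ then Theorem~\ref{theorem.sufficient}(B) gives $\limsup_{t\to\infty}|Y(t)|\geq\sqrt{2L'}>0$ a.s., while if $L'=\infty$ then $\Sigma^2(t)\to\infty$ and Theorem~\ref{theorem.sufficient}(E) gives $\limsup_{t\to\infty}|Y(t)|=+\infty$ a.s. In either case $Y$ fails to converge to zero with probability one, so by the contrapositive of Theorem~\ref{theorem.Ybounded}(A) the sequence $\theta$ cannot satisfy \eqref{eq.thetastable}. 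Since \eqref{eq.thetastable} is exactly statement (A) of Theorem~\ref{theorem.Xiffsigma}, the equivalence there shows that its statement (B) also fails, that is $\mathbb{P}[\lim_{t\to\infty}X(t)=0]=0$ for every $\xi\in\mathbb{R}$.

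It remains to prove $\liminf_{t\to\infty}|X(t)|=0$ a.s., for which I would verify $\sigma\notin L^2(0,\infty)$: from $\liminf_{t\to\infty}\sigma^2(t)\log t=c>0$ one has $\sigma^2(t)\geq \tfrac{c}{2}/\log t$ for all large $t$, and since $\int_2^\infty dt/\log t=+\infty$ the function $\sigma$ is not square integrable. Theorem~\ref{lemma.Xliminf} then gives $\liminf_{t\to\infty}|X(t)|=0$ a.s., completing case (ii). The main obstacle is precisely this middle case: the $\liminf$ hypothesis destroys the clean trichotomy of Proposition~\ref{prop.sigmasqlogtS}, so one must argue non-convergence of $Y$ directly (splitting according to whether $\liminf_{t\to\infty}\Sigma^2(t)$ is finite) and then transfer non-convergence back to $X$ through the characterisation in Theorem~\ref{theorem.Xiffsigma}, establishing the recurrence statement $\liminf_{t\to\infty}|X(t)|=0$ separately from the failure of square integrability.
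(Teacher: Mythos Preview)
Your proposal is correct and follows essentially the same route as the paper. The paper merely records that $\lim_{t\to\infty}\sigma^2(t)\log t=0$ implies $\Sigma^2(t)\to 0$, that $\lim_{t\to\infty}\sigma^2(t)\log t=\infty$ implies $\Sigma^2(t)\to\infty$, and that $\liminf_{t\to\infty}\sigma^2(t)\log t>0$ implies $\liminf_{t\to\infty}\Sigma^2(t)>0$, and then declares the result an easy corollary of Theorem~\ref{theorem.sufficient} together with Proposition~\ref{prop.yimpliesx} (equivalently, Theorem~\ref{theorem.sufficientforx}); your argument supplies the details the paper omits, and in particular your explicit split in part (ii) on whether $L'=\liminf_{t\to\infty}\Sigma^2(t)$ is finite or infinite, and your direct verification that $\sigma\notin L^2(0,\infty)$ before invoking Theorem~\ref{lemma.Xliminf}, are exactly the points the paper leaves implicit.
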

Part (i) is part of~\cite[Proposition 3.3(a)]{JAJGAR:2009}. Part
(iii) is~\cite[Lemma 3.7]{JAJGAR:2009}. In \cite{ChanWill:1989},
Chan and Williams have proven in the case when $t\mapsto\sigma^2(t)$
is decreasing, that $Y$ obeys \eqref{eq.Y0} if and only if $\sigma$
obeys \eqref{eq.sigmalogto0}. Our final result is a corollary of
this observation and Theorem~\ref{theorem.Xiffsigma}, and also of
\cite[Theorem 3.8]{JAJGAR:2009}. A stronger result than
Theorem~\ref{theorem.stochastablegen} can be stated if the sequence
$\theta$ in \eqref{def.theta} is decreasing: in this case,
$\lim_{n\to\infty}\theta^2(n)\log n=0$ is equivalent to
\eqref{eq.stochglobalstable}.
\begin{theorem} \label{theorem.eqvt}
Suppose that $f$ satisfies \eqref{eq.fglobalunperturbed}.
Suppose that $\sigma$ obeys \eqref{eq.sigcns} and $t\mapsto\sigma^2(t)$ is decreasing. Let $X$ be any solution of
\eqref{eq.sde}. Then the following are equivalent:
\begin{itemize}
\item[(A)] $\sigma$ obeys $\lim_{t\to\infty} \sigma^2(t)\log t=0$;
\item[(B)] $\lim_{t\to\infty} X(t,\xi)=0$ a.s. for each $\xi\in\mathbb{R}$.
\end{itemize}
\end{theorem}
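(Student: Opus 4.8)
The plan is to reduce everything to the summability criterion already isolated in Theorem~\ref{theorem.Xiffsigma}. By that theorem, statement (B) is equivalent to \eqref{eq.thetastable}, namely that $S'(\epsilon)<+\infty$ for every $\epsilon>0$, with $\theta$ and $S'$ given by \eqref{def.theta} and \eqref{def.s'epsilon}. The implication (A)$\Rightarrow$(B) then requires no monotonicity at all: if $\lim_{t\to\infty}\sigma^2(t)\log t=0$, Theorem~\ref{theorem.stochastablegen}(i) gives directly that $X$ obeys \eqref{eq.stochglobalstable}. Hence the whole content of the theorem is the converse (B)$\Rightarrow$(A), for which the hypothesis that $t\mapsto\sigma^2(t)$ is decreasing will be indispensable.

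For (B)$\Rightarrow$(A) I would first transfer the decreasing hypothesis from $\sigma^2$ to the sequence $\theta^2$. Since $\sigma^2$ is decreasing, $\theta^2$ is decreasing and satisfies $\sigma^2(n+1)\le\theta^2(n)\le\sigma^2(n)$; combining this with $\log(n+1)/\log n\to 1$ and interpolating over each interval $[n,n+1]$ (on which $\sigma^2(t)\log t\le\sigma^2(n)\log(n+1)$) shows that $\lim_{t\to\infty}\sigma^2(t)\log t=0$ holds if and only if $\lim_{n\to\infty}\theta^2(n)\log n=0$. Thus, granted the standing assumption \eqref{eq.thetastable}, it remains to prove that the decreasing sequence $\theta^2$ obeys $\theta^2(n)\log n\to 0$.

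The cleanest way to conclude is through the linear equation. By Theorem~\ref{theorem.Ybounded}, condition \eqref{eq.thetastable} is equivalent to $\lim_{t\to\infty}Y(t)=0$ a.s.: part~(A) supplies $Y\to 0$, while if \eqref{eq.thetastable} fails we are in case (B) or (C) and there $\limsup_{t\to\infty}|Y(t)|>0$ a.s. For decreasing $\sigma^2$, Chan and Williams~\cite{ChanWill:1989} proved that $Y$ obeys \eqref{eq.Y0} if and only if $\sigma$ obeys \eqref{eq.sigmalogto0}. Chaining these equivalences gives (B) $\Leftrightarrow$ \eqref{eq.thetastable} $\Leftrightarrow$ $\{Y(t)\to 0\}$ $\Leftrightarrow$ (A), which proves the theorem.

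The point at which monotonicity is \emph{essential} --- and the step I expect to be the main obstacle if one wants a self-contained argument in place of citing Chan--Williams --- is showing directly that \eqref{eq.thetastable} forces $\theta^2(n)\log n\to 0$ once $\theta^2$ is decreasing. This cannot follow from Proposition~\ref{prop.sigmasqlogtS} alone, since a decreasing $\theta^2$ may make $\theta^2(n)\log n$ oscillate, so the limit need not exist a priori; indeed, without monotonicity the conclusion is simply false, as \eqref{eq.stochglobalstable} can hold while $\sigma^2(t)\log t\not\to 0$. The mechanism is a block lower bound on $S'$: if $\limsup_n\theta^2(n)\log n\ge\delta>0$, pick $n_k\uparrow\infty$ with $\theta^2(n_k)\log n_k\ge\delta$; by monotonicity $\theta^2(n)\ge\theta^2(n_k)\ge\delta/\log n_k$ for all $n\le n_k$, so for $n\in[\sqrt{n_k},n_k]$ each summand of \eqref{def.s'epsilon} is at least $\sqrt{\delta/\log n_k}\,\exp(-\epsilon^2\log n_k/(2\delta))=\sqrt{\delta/\log n_k}\,n_k^{-\epsilon^2/(2\delta)}$. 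Summing over the roughly $n_k/2$ such indices yields a block sum bounded below by a constant times $n_k^{1-\epsilon^2/(2\delta)}/\sqrt{\log n_k}$. Choosing $\epsilon<\sqrt{2\delta}$ makes the exponent positive, so these block sums tend to infinity; extracting a sparse subsequence (say $n_{k+1}>n_k^2$) to render the blocks disjoint then forces $S'(\epsilon)=+\infty$, contradicting \eqref{eq.thetastable}. Hence $\limsup_n\theta^2(n)\log n=0$, i.e. $\theta^2(n)\log n\to 0$, as required.
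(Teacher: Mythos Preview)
Your argument is correct, and the main chain --- (B) $\Leftrightarrow$ \eqref{eq.thetastable} via Theorem~\ref{theorem.Xiffsigma}, then \eqref{eq.thetastable} $\Leftrightarrow$ $\{Y\to 0\}$ via Theorem~\ref{theorem.Ybounded}, then $\{Y\to 0\}\Leftrightarrow$ (A) via Chan--Williams for decreasing $\sigma^2$ --- is exactly what the paper does: the text immediately preceding the theorem declares it a corollary of the Chan--Williams result together with Theorem~\ref{theorem.Xiffsigma}, and gives no further proof. Your additional self-contained block argument, which bypasses the Chan--Williams citation by showing directly that \eqref{eq.thetastable} with $\theta^2$ decreasing forces $\theta^2(n)\log n\to 0$, is not in the paper and is a genuine addition: it internalises the converse to the present framework and pinpoints precisely where the monotonicity hypothesis enters. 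One minor simplification: disjointness of the blocks is not actually needed, since a single block $[\sqrt{n_k},n_k]$ already furnishes a lower bound on $S'(\epsilon)$ that diverges as $k\to\infty$, so the sparse-subsequence extraction can be omitted.
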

The remark preceding this result points to the importance of the
condition $\theta(n)^2\log n\to 0$ as $n\to\infty$, as indeed does
Proposition~\ref{prop.thetaSigma} part (a). We now supply an example
in which $\theta(n)^2\log n\to 0$ as $n\to\infty$, but $t\mapsto
\sigma^2(t)$ has ``spikes'' which prevents it from satisfying the
condition $\lim_{t\to\infty} \sigma^2(t)\log t=0$.
\begin{example}
Consider the decomposition of $[0,\infty)$ into a union of disjoint intervals
\[
[0,\infty)=\cup_{k=0}^\infty \{I_k \cup J_k \cup  K_k\},
\]
where $\epsilon_k\in (0,1/2)$ for each $k\geq 0$ and
\[
I_k=[k, k+\epsilon_k],\quad J_k=(k+\epsilon_k, k+1-\epsilon_k),\quad K_k=[k+1-\epsilon_k, k+1), \quad k\in \mathbb{N}.
\]
Let $(l_k)_{k\geq 0}$ and $(q_k)_{k\geq 0}$ be positive sequences and consider the function $\sigma:[0,\infty)\to [0,\infty)$ defined by
\[
\sigma^2(t)= \left\{
\begin{array}{ccc}
& l_k-\frac{l_k-q_k}{\epsilon_k}(t-k),&  \quad t\in [k, k+\epsilon_k] ,   \\
& q_k, &  \quad t\in (k+\epsilon_k, k+1-\epsilon_k),   \\ & l_{k+1}+\frac{l_{k+1}-q_k}{\epsilon_k}(t-k-1),
&  \quad t\in [k+1-\epsilon_k, k+1).\end{array}\right.
\]
Then $t\mapsto \sigma^2(t)$ is continuous. If $\theta$ is defined by
\eqref{def.theta}, then
\begin{align*}
\theta^2(k)
&=q_k(1-\epsilon_k)+\frac{1}{2}\epsilon_k(l_{k+1}+l_k).
\end{align*}
Notice also that $\sigma^2(k)=l_k$. Suppose $q_k \log k\to 0$,
$\epsilon_k(l_k+l_{k+1})\log k\to 0$ but
$\limsup_{k\to\infty}l_k\log k>0$. Then $\theta^2(k)\log k\to 0$ as
$k\to\infty$, but
\[
\limsup_{t\to\infty} \sigma^2(t)\log t\geq \limsup_{k\to\infty}
\sigma^2(k)\log k= \limsup_{k\to\infty} l_k\log k>0.
\]
Concrete examples of sequences for which these conditions hold
include
\[
q_k=\frac{1}{k+1}, \quad \epsilon_k=\frac{1}{k+3}, \quad l_k=1,
\]
or
\[
q_k=\frac{1}{k+1}, \quad  \epsilon_k=\frac{1}{(k+3)^2}, \quad l_k=k.
\]
\end{example}

\section{Proof of Existence Results from Section~\ref{sec:exist}} \label{sec.proofsexist}
\subsection{Proof of Proposition~\ref{prop.exist}}
Consider the affine stochastic differential equation
\eqref{eq.linsde}. Since $\sigma$ is continuous, there is a unique
continuous adapted process which obeys \eqref{eq.linsde}. Let
$\Omega_Y$ be the a.s. event defined by \eqref{def.OmegaY} on which
$Y$ is defined. Now, for each $\omega\in \Omega_Y$, define the
function
\[
\varphi(t,x,\omega)=-f(x+Y(t,\omega))+Y(t,\omega), \quad t\geq 0.
\]
Since $f$ is continuous, and the sample path $t\mapsto Y(t,\omega)$
is continuous, $(t,x)\mapsto \varphi(t,x,\omega)$ is continuous.
Consider now the differential equation
\[
z'(t,\omega)=\varphi(t,z(t,\omega),\omega), \quad t>0; \quad
z(0,\omega)=\xi.
\]
By the continuity of $\varphi$ in both arguments, by the Peano
existence theorem, there exists a continuous local solution
$t\mapsto z(t,\omega)$ for each $\omega\in \Omega_Y$ and $0\leq
t<\tau_e(\omega)$.
Presently, it will be shown that $\tau_e(\omega)=+\infty$ a.s. on
$\Omega_Y$.

Moreover, as $Y$ is adapted to $(\mathcal{F}^B(t))_{t\geq 0}$, $z$
is also adapted to $(\mathcal{F}^B(t))_{t\geq 0}$. Now consider the
process $X$ defined on $\Omega_Y$ by $X(t)=z(t)+Y(t)$ for $t\in
[0,\tau_e)$. By construction it is continuous and adapted.
Furthermore, we have for $t\in [0,\tau_e)$
\begin{align*}
X(t,\omega)&=z(t,\omega)+Y(t,\omega)\\
&=\xi+\int_0^t \varphi(s,z(s,\omega),\omega)\,ds + \int_{0}^t -Y(s,\omega)\,ds + \left(\int_0^t \sigma(s)\,dB(s)\right)(\omega)\\
&=\xi+\int_0^t \left\{-f(z(s,\omega)+Y(s,\omega))+Y(s,\omega)\right\}\,ds + \int_{0}^t -Y(s,\omega)\,ds \\
&\qquad + \left(\int_0^t \sigma(s)\,dB(s)\right)(\omega)\\
&=\xi+\int_0^t -f(X(s,\omega))\,ds + \left(\int_0^t \sigma(s)\,dB(s)\right)(\omega)\\
&=\left(\xi+\int_0^t -f(X(s))\,ds + \int_0^t
\sigma(s)\,dB(s)\right)(\omega).
\end{align*}
Hence $X(\cdot,\omega)$ obeys \eqref{eq.sde} for each $\omega\in
\Omega_Y$ on the interval $[0,\infty)$. The proof that $\tau_e$ is
infinite a.s. was given in the Appendix of \cite{JAJGAR:2009}.

\subsection{Proof of Proposition~\ref{prop.existonesided}}
The proof is inspired by an observation in e.g., \cite{K&S}. Note
first by Proposition~\ref{prop.exist} that the continuity of $f$
together with \eqref{eq.fglobalunperturbed} guarantees the existence
of a continuous adapted process which obeys \eqref{eq.sde}. Suppose
therefore that $X_1$ and $X_2$ are any two solutions of
\eqref{eq.sde}. Then
\[
d(X_1(t)-X_2(t))=\left(-f(X_1(t))+f(X_2(t))\right)\,dt,
\]
and by It\^o's rule we have that
\[
d(X_1(t)-X_2(t))^2=-2(X_1(t)-X_2(t))\left(f(X_1(t))-f(X_2(t))\right)\,dt,
\quad t\geq 0.
\]
Since $X_1(0)=X_2(0)=\xi$, we have
\[
(X_1(t)-X_2(t))^2=-2\int_0^t
(X_1(s)-X_2(s))\left(f(X_1(s))-f(X_2(s))\right)\,ds, \quad t\geq 0.
\]
Since $f$ obeys \eqref{eq.fonesidedlip}, we have
\[
(X_1(t)-X_2(t))^2\leq 2K\int_0^t (X_1(s)-X_2(s))^2\,ds, \quad t \geq
0.
\]
If $K\leq 0$, we can conclude automatically that $X_1(t)=X_2(t)$ for
all $t\geq 0$ a.s., and that therefore the solution is unique. If
$K>0$, by applying Gronwall's inequality to the non--negative
continuous function $t\mapsto (X_1(t)-X_2(t))^2$, we conclude that
$X_1(t)=X_2(t)$ for all $t\geq 0$ a.s., and once again we have
uniqueness.

\section{Proofs of Preliminary Results}  \label{sec.prelimproof}
\subsection{Proof of Theorem~\ref{th.sigL2}} \label{sec.sigmal2proof}
By It\^o's rule, we have
\begin{equation} \label{eq.Xsq}
X^2(t)=\xi^2 - \int_0^t 2X(s)f(X(s))\,ds+\int_0^t \sigma^2(s)\,ds +
\int_0^t 2X(s)\sigma(s)\,dB(s), \quad t\geq 0.
\end{equation}
Since $xf(x)\geq 0$ for all $x\in\mathbb{R}$ and $\sigma\in
L^2(0,\infty)$, we have
\[
X^2(t)\leq \xi^2+\int_0^\infty \sigma^2(s)\,ds + 2\int_0^t
X(s)\sigma(s)\,dB(s), \quad t\geq 0.
\]
Define $M$ to be the local martingale given by $M(t)=\int_0^t
X(s)\sigma(s)\,dB(s)$ for $t\geq 0$. Suppose that there is an event
$A=\{\omega: \lim_{t\to\infty} \langle M \rangle (t)=+\infty\}$ such
that $\mathbb{P}[A]>0$. Then a.s. on $A$ we have
$\liminf_{t\to\infty} M(t)=-\infty$, which implies that
$\liminf_{t\to\infty} X^2(t)=-\infty$ a.s. on $A$, which is absurd.
Therefore $\lim_{t\to\infty} \langle M\rangle(t)<+\infty$ a.s., so
it follows that $\lim_{t\to\infty} M(t)=:M(\infty)$ exists a.s. and
is a.s. finite. Therefore we have that $t\mapsto |X(t)|$ is a.s.
bounded. Since $X^2(t)\geq 0$, it follows from \eqref{eq.Xsq} that
\[
\int_0^t 2X(s)f(X(s))\,ds =\xi^2 -X^2(t)+\int_0^t \sigma^2(s)\,ds +
2M(t) \leq \xi^2 +\int_0^\infty \sigma^2(s)\,ds + 2M(t).
\]
Therefore, as $xf(x)\geq 0$ for all $x\in\mathbb{R}$ and $M(t)\to
M(\infty)$ as $t\to\infty$ (where $M(\infty)$ is finite), we have
that
\[
\lim_{t\to\infty} \int_0^t X(s,\omega)f(X(s,\omega))\,ds
=:I(\omega)\in (0,\infty)
\]
Therefore, as $t\mapsto |X(t)|$ is a.s. bounded, and all the terms
on the righthand side of \eqref{eq.Xsq} have finite limits as
$t\to\infty$, it follows that there is $L=L(\omega)\in [0,\infty)$
such that $\lim_{t\to\infty} X^2(t,\omega)=L(\omega)$ for all
$\omega$ in an a.s. event $A$, say. By continuity this means that
there is an a.s. event $A=\{\omega: X(t,\omega)^2\to L\in [0,\infty)
\text{ as $t\to\infty$}\}$ such that $A=A_+\cup A_-\cup A_0$ where
\begin{gather*}
A_+=\{\omega: X(t,\omega)\to \sqrt{L(\omega)}\in (0,\infty)\text{ as $t\to\infty$}\}, \\
 A_-=\{\omega: X(t,\omega)\to -\sqrt{L(\omega)}\in (-\infty,0)\text{ as $t\to\infty$}\},
\end{gather*}
and $A_0=\{\omega: X(t,\omega)\to 0 \text{ as $t\to\infty$}\}$.
Suppose that $\omega\in A_+$. Then
\begin{equation} \label{eq.intxfxave}
\lim_{t\to\infty} \frac{1}{t}\int_0^t X(s,\omega)f(X(s,\omega))\,ds
= \sqrt{L(\omega)}f(\sqrt{L(\omega)})>0,
\end{equation}
by continuity of $X$, $f$ and the fact that $xf(x)>0$ for $x\neq 0$.
Since the last two terms on the righthand side of \eqref{eq.Xsq}
have finite limits as $t\to\infty$, \eqref{eq.intxfxave} implies
that for $\omega\in A_+$ that
\[
0\leq \lim_{t\to\infty}
\frac{X^2(t,\omega)}{t}=-2\sqrt{L(\omega)}f(\sqrt{L(\omega)})<0,
\]
a contradiction. Therefore $\mathbb{P}[A_+]=0$. A similar argument
yields $\mathbb{P}[A_-]=0$. Since $\mathbb{P}[A]=1$, we must have
$\mathbb{P}[A_0]=1$, as required.

\subsection{Proof of Theorem~\ref{lemma.Xliminf}}
Let $A_1=\{\omega: \liminf_{t\to\infty} X(t,\omega)>0\}$ and
suppose that $\mathbb{P}[A_1]>0$. In particular, for $\omega\in
A_1$, define $(0,\infty] \ni c(\omega)= \liminf_{t\to\infty}
X(t,\omega)$ Then there exists $T_1(\omega)>0$ such that
$X(t,\omega)>0$ for all $t>T_1(\omega)$. Hence for $t\geq
T_1(\omega)$, we have
\begin{align*}
X(t)&=X(0)-\int_0^{T_1} f(X(s))\,ds - \int_{T_1}^t f(X(s))\,ds + \int_0^t \sigma(s)\,dB(s)\\
&\leq X(0)-\int_0^{T_1} f(X(s))\,ds + \int_0^t \sigma(s)\,dB(s).
\end{align*}
Since $\sigma\not\in L^2(0,\infty)$ it follows that
$\liminf_{t\to\infty} \int_0^t \sigma(s)\,dB(s)=-\infty$ a.s.
Therefore a.s. on $A_1$ we have
\[
c(\omega)=\liminf_{t\to\infty} X(t,\omega)\leq X(0)-\int_0^{T_1}
f(X(s))\,ds + \liminf_{t\to\infty} \int_0^t
\sigma(s)\,dB(s)=-\infty,
\]
a contradiction. Hence $\mathbb{P}[A_1]=0$, so
$\liminf_{t\to\infty} X(t)\leq 0$ a.s. To prove that
$\limsup_{t\to\infty} X(t)\geq 0$ a.s., define
$X_-(t)=-X(t)$, $f_-(x)=-f(-x)$, $\sigma_-(t)=-\sigma(t)$. Then
\[
dX_-(t)=-f_-(X_-(t))\,dt + \sigma_-(t)\,dB(t),\quad t\geq 0.
\]
By the same argument as above, it can be shown that
$\liminf_{t\to\infty} X_-(t)\leq 0$ a.s., which yields
$\limsup_{t\to\infty} X(t)\geq 0$ a.s. Combining this with
$\liminf_{t\to\infty} X(t)\leq 0$ a.s. yields the required result.

\section{Proofs of Proposition~\ref{prop.sigmasqlogtS} and \ref{prop.thetaSigma}} \label{sec.proofs1}
\subsection{Proof of Proposition~\ref{prop.sigmasqlogtS}}
By \eqref{eq.millsasy} we have
\[
\lim_{x\to\infty} \left\{\log(1-\Phi(x)) - \log x^{-1}
+x^2/2\right\}=\log\left(1/\sqrt{2\pi}\right),
\]
and so
\[
\lim_{x\to\infty} \frac{\log(1-\Phi(x))}{x^2/2}=-1.
\]
Suppose that $\theta(n)\to 0$ as $n\to\infty$, we have for
$\epsilon>0$ that
\[
\lim_{n\to\infty}
\frac{\log(1-\Phi(\epsilon/\theta(n)))}{\epsilon^2/(2\theta^2(n))}=-1.
\]
Thus
\begin{align}
\lim_{n\to\infty} \frac{\log(1-\Phi(\epsilon/\theta(n)))}{\log n}
&=\lim_{n\to\infty}
\frac{\log(1-\Phi(\epsilon/\theta(n)))}{\epsilon^2/(2\theta^2(n))}
\cdot \frac{\epsilon^2/(2\theta^2(n))}{\log n}\nonumber\\
\label{eq.limpolyexptail} &=-\frac{\epsilon^2}{2}\lim_{n\to\infty}
\frac{1}{\theta^2(n)\log n}.
\end{align}

In cases (A) and (B), we have that $\theta^2(n):=\int_{n}^{n+1}
\sigma^2(s)\,ds$ obeys
\begin{equation} \label{eq.limthetalogn}
\lim_{n\to\infty}\theta^2(n)\log n=L,
\end{equation}
and in each case $\theta(n)\to 0$ as $n\to\infty$. Therefore
\eqref{eq.limpolyexptail} holds in both case (A) and case (B). To
prove part (A), note that when $L=0$, from \eqref{eq.limthetalogn}
and \eqref{eq.limpolyexptail}, we have
\[
\lim_{n\to\infty} \frac{\log(1-\Phi(\epsilon/\theta(n)))}{\log
n}=-\infty
\]
for every $\epsilon>0$, so by \eqref{def.Sepsilon}, we have
$S(\epsilon)<+\infty$ for every $\epsilon>0$. Therefore, by
Lemma~\ref{lemma.sepss'eps}, $S'$ obeys \eqref{eq.thetastable}, as
required.

To prove part (B), note that when $L\in (0,\infty)$, from
\eqref{eq.limthetalogn} and \eqref{eq.limpolyexptail}, we have
\[
\lim_{n\to\infty} \frac{\log(1-\Phi(\epsilon/\theta(n)))}{\log
n}=-\frac{\epsilon^2}{2L}.
\]
If $\epsilon>\sqrt{2L}$, then by \eqref{def.Sepsilon} we have
$S(\epsilon)<+\infty$, and thus by Lemma~\ref{lemma.sepss'eps},
$S'(\epsilon)<+\infty$. On the other hand, if $\epsilon<\sqrt{2L}$,
by \eqref{def.Sepsilon} we have that $S(\epsilon)=+\infty$, and so
by Lemma~\ref{lemma.sepss'eps}, $S'(\epsilon)=+\infty$. Therefore
\eqref{eq.thetabounded} holds with $\epsilon'=\sqrt{2L}$.

In case (C), suppose that there exists $\epsilon^\ast>0$ such that
$S'(\epsilon^\ast)<+\infty$. Then by Lemma~\ref{lemma.sepss'eps}, we
have that $S(\epsilon^\ast)<+\infty$. Then we have that
$1-\Phi(\epsilon^\ast/\theta(n))\to 0$ as $n\to\infty$. This implies
that $\theta(n)\to 0$ as $n\to\infty$. Thus, we have that
\eqref{eq.limpolyexptail} holds. Now, because $\sigma^2(t)\log
t\to\infty$ as $t\to\infty$, we have that $\theta^2(n)\log
n\to\infty$ as $n\to\infty$. Therefore, using this fact and
\eqref{eq.limpolyexptail}, we have that
\[
\lim_{n\to\infty} \frac{\log(1-\Phi(\epsilon^\ast/\theta(n)))}{\log
n}=0.
\]
Therefore, it follows from \eqref{def.Sepsilon} that
$S(\epsilon^\ast)=+\infty$, a contradiction. Therefore, we must have
that $S'(\epsilon)=+\infty$ for every $\epsilon>0$, which is
\eqref{eq.thetaunstable}, as claimed.

\subsection{Proof of Proposition~\ref{prop.thetaSigma}}
For $n\leq t<n+1$, we have that $\Sigma^2(t)\leq e^2 \Sigma^2(n+1)$
and $\Sigma^2(t)\geq e^{-2}\Sigma^2(n)$. Thus it is easy to see that
$\Sigma^2(t)\to 0$ as $t\to\infty$ if and only if the sequence
$(\Sigma^2(n))_{n\geq 0}$ converges to zero.

Writing
\[
\Sigma^2(n)=e^{-2n}\sum_{j=0}^{n-1} \int_{j}^{j+1}
e^{2s}\sigma^2(s)\,ds \cdot \log n,
\]
and using \eqref{def.capTheta} we readily get the double inequality
\begin{equation} \label{eq.thetaTheta}
\Theta^2(n) \cdot \log n \leq \Sigma^2(n)\leq e^2  \Theta^2(n) \cdot
\log n.
\end{equation}
Hence by considering the last term in the sum on the left hand side
of \eqref{eq.thetaTheta}, we get $\Sigma^2(n)\geq e^{-2}
\theta^2(n-1)\log n$, so $\theta^2(n-1) \log (n-1) \leq e^2
\Sigma^2(n)$. Therefore, if $\Sigma^2(n)\to 0$, we have that
$\theta^2(n)\log n \to 0$ as $n\to\infty$.

On the other hand, if $\theta^2(n)\log n \to 0$ as $n\to\infty$, for
every $\epsilon>0$ there exists an integer $N(\epsilon)\geq 1$ such
that $\theta^2(n)\log n<\epsilon$ for all $n\geq N(\epsilon)$. Thus
for $n\geq N(\epsilon)+1$, by \eqref{eq.thetaTheta}, we have
\[
\Sigma^2(n)
\leq e^2 \frac{\sum_{j=0}^{N(\epsilon)-1} e^{2j}
\theta^2(j)}{e^{2n}/\log n} + \epsilon e^2
\frac{\sum_{j=N(\epsilon)}^{n-1} e^{2j}/\log j}{e^{2n}/\log n},
\]
so
\[
\limsup_{n\to\infty}  \Sigma^2(n) \leq \epsilon e^2
\limsup_{n\to\infty}\frac{\sum_{j=2}^{n-1} e^{2j}/\log
j}{e^{2n}/\log n}.
\]
Since $x\mapsto e^{2x}/\log x$ is increasing on $[2,\infty)$ we have
that
\[
\sum_{j=2}^{n-1} e^{2j}/\log j \leq \sum_{j=2}^{n-1} \int_{j}^{j+1}
e^{2x}/\log x\,dx = \int_2^n e^{2x}/\log x\,dx
\]
By l'H\^{o}pital's rule
\[
\lim_{t\to\infty} \frac{\int_2^t e^{2x}/\log x\,dx}{e^{2t}/\log t} =
\lim_{t\to\infty} \frac{1}{2-1/(t\log t)}=\frac{1}{2},
\]
so
\[
 \limsup_{n\to\infty}\frac{\sum_{j=2}^{n-1} e^{2j}/\log j}{e^{2n}/\log n}\leq \frac{1}{2}.
\]
Hence $\limsup_{n\to\infty}  \Sigma^2(n) \leq \epsilon e^2/2$. Since
$\epsilon>0$ is arbitrary, we have $\Sigma^2(n)\to 0$ as
$n\to\infty$, as required.

Since for $t\in [n,n+1)$ we have $\Sigma^2(t)\leq e^2 \Sigma^2(n+1)$
and $\Sigma^2(t)\geq e^{-2}\Sigma^2(n)$, it follows that
$\limsup_{t\to\infty} \Sigma^2(t)\in (0,\infty)$ implies
$\limsup_{n\to\infty} \Sigma^2(n)<+\infty$. If $\limsup_{n\to\infty}
\Sigma^2(n)=0$, then $\limsup_{t\to\infty} \Sigma^2(t)=0$, a
contradiction. Therefore we have $\limsup_{t\to\infty}
\Sigma^2(t)\in (0,\infty)$ implies $\limsup_{n\to\infty}
\Sigma^2(n)\in (0,\infty)$. On the other hand, if
$\limsup_{n\to\infty} \Sigma^2(n)=:L\in (0,\infty)$, we see
immediately that
\[
0<e^{-2} L\leq \limsup_{t\to\infty} \Sigma^2(t) \leq e^2 L<+\infty,
\]
so the first two statements are equivalent. By
\eqref{eq.thetaTheta}, we have
$\limsup_{n\to\infty}\Sigma^2(n)=L\in (0,\infty)$ implies that
\[
\limsup_{n\to\infty} \Theta^2(n) \log n \leq L, \quad
\limsup_{n\to\infty} \Theta^2(n)\log n \geq L/e^2.
\]
By \eqref{def.capTheta}, it follows that
$\theta^2(n)=e^2\Theta^2(n+1)-\Theta^2(n)$. Hence
$\limsup_{n\to\infty}\Sigma^2(n)=L\in (0,\infty)$ implies that
\[
\limsup_{n\to\infty} \theta^2(n)\log n \leq
e^2\limsup_{n\to\infty}\Theta^2(n+1)\log n
+\limsup_{n\to\infty}\Theta^2(n)\log n \leq (1+e^2)L.
\]
To see that $\limsup_{n\to\infty} \theta^2(n)\log n>0$, suppose not.
Then $\theta^2(n)\log n\to 0$ as $n\to\infty$, and thus by part (i),
we have that $\Sigma^2(n)\to 0$ as $n\to\infty$, which is false by
hypothesis. Thus $\limsup_{n\to\infty}\Sigma^2(n)=L\in (0,\infty)$
implies $\limsup_{n\to\infty} \theta^2(n)\log n \in (0,\infty)$.

If $\limsup_{n\to\infty}\theta^2(n)\log n=:L'\in (0,\infty)$, then
$L'':=\limsup_{n\to\infty} \Theta^2(n)\log n\leq L'e^2/2$. Suppose
that $L''=0$. Then $\Sigma^2(n)\to 0$ as $n\to\infty$. But this
implies that $\theta^2(n)\log n\to 0$ as $n\to\infty$, a
contradiction. Therefore we have that $\limsup_{n\to\infty}
\Theta^2(n)\log n\in (0,\infty)$, proving the last equivalence.

To prove part (iii), note that for $n\leq t<n+1$, we have
$e^{-2}\Sigma^2(n)\leq \Sigma^2(t)\leq e^2\Sigma^2(n+1)$. Hence
$\liminf_{t\to \infty}\Sigma^2(t)\in (0,\infty)$ implies that
$\liminf_{n\to \infty}\Sigma^2(n)>0$. To see $\liminf_{n\to
\infty}\Sigma^2(n)<+\infty$, suppose not, then $\liminf_{t\to
\infty}\Sigma^2(t)=+\infty$, arriving at a contradiction. Thus we
must have that $\liminf_{t\to \infty}\Sigma^2(t)\in (0,\infty)$
implies $\liminf_{n\to \infty}\Sigma^2(n)\in (0,\infty)$

Let $\liminf_{n\to \infty}\Sigma^2(n)\in(0,\infty)=L$, then
\[0<e^{-2}L\leq \liminf_{t\to \infty}\Sigma^2(t)\leq e^2L<\infty\]
i.e. $\liminf_{t\to \infty}\Sigma^2(t)\in (0,\infty)$. Thus (A) and
(B) are equivalent.


By \eqref{eq.thetaTheta}, $\liminf_{n\to \infty}\Sigma^2(n)\in
(0,\infty)$ if and only if $\liminf_{n\to \infty}\Theta^2(n)\log
n\in (0,\infty)$. Also $\Sigma^2(n)\geq e^{-2}\theta^2(n-1)\log n$
implies that $ \liminf_{n\to \infty}\theta^2(n-1)\log (n-1)\leq
\liminf_{n\to \infty}e^{2}\Sigma^2(n)<\infty$, with $\liminf_{n\to
\infty}\theta^2(n)\log n\geq 0$ by hypothesis, we finish the proof
for (iii).

To prove (iv), note that for $n\leq t<n+1$, we have
$e^{-2}\Sigma^2(n)\leq \Sigma^2(t)\leq e^2\Sigma^2(n+1)$. Hence
$\lim_{t\to \infty}\Sigma^2(t)=\infty$ if and only if $\lim_{n\to
\infty}\Sigma^2(n)=\infty$. Again from \eqref{eq.thetaTheta}, we
have $\lim_{n\to \infty}\Theta^2(n)\log n=\infty$ implies
$\lim_{n\to \infty}\Sigma^2(n)=\infty$ and vice versa. To prove that
all imply that $\limsup_{n\to \infty}\theta^2(n)\log n=\infty$,
suppose $\limsup_{n\to \infty}\theta^2(n)\log n=L^{'}<\infty$, then
from $\theta^2(n)\log n=e^2\Theta^2(n+1)-\Theta^2(n)$, we have
$\limsup_{n\to \infty}\theta^2(n)\log n = \limsup_{n\to
\infty}[e^2\Theta^2(n+1)-\Theta^2(n)]<\infty$. Therefore
$\limsup_{n\to \infty}\Theta^2(n)\log n<\infty$, which is a
contradiction.

\section{Proof of Proposition~\ref{prop.yimpliesx}} \label{sec:s9}

\subsection{Proof of Part (A) of Proposition~\ref{prop.yimpliesx}} 
In the case when $\sigma\in L^2(0,\infty)$, we have that each of the
events $\{\omega:\lim_{t\to\infty} Y(t,\omega)=0\}$ and
$\{\omega:\lim_{t\to\infty} X(t,\omega)=0\}$ are a.s. by
Theorem~\ref{th.sigL2}.

Suppose now that $\sigma\not \in L^2(0,\infty)$. Define
\begin{equation} \label{def.Omegae}
\Omega_e=\Omega_X\cap\Omega_Y,
 \end{equation}
 where $\Omega_X$ is given by \eqref{def.OmegaX} and $\Omega_Y$ is defined by \eqref{def.OmegaY}. Define for each $\omega\in
\Omega_e$ the realisation $z(\cdot,\omega)$ by
$z(t,\omega)=X(t,\omega)-Y(t,\omega)$ for $t\geq 0$. Then
$z(\cdot,\omega)$ is in $C^1(0,\infty)$ and obeys
\begin{equation*}
z'(t,\omega)=-f(X(t,\omega))+Y(t,\omega)=-f(z(t,\omega)+Y(t,\omega))+Y(t,\omega),
\, t\geq 0; \, z(0)=\xi.
\end{equation*}

Define
\[
A_2=\{\omega\in \Omega_e:\lim_{t\to\infty} Y(t,\omega)=0\},
\quad A_3=\{\omega\in \Omega_e: \liminf_{t\to\infty}
|X(t,\omega)|=0\}.
\]
Therefore $A_2$ is an a.s. event by hypothesis. Since
$\sigma\not\in L^2(0,\infty)$, $A_3$ is an a.s. event by
Theorem~\ref{lemma.Xliminf}. Thus the event $A_4$ defined by $A_4=A_2\cap A_3$ is almost sure.
Fix $\omega\in A_4$. Since $Y(t,\omega)\to 0$ as $t\to\infty$
and $\liminf_{t\to\infty} |X(t,\omega)|=0$, it follows that
\[
\liminf_{t\to\infty} |z(t,\omega)|\leq \liminf_{t\to\infty}
|X(t,\omega)|+|Y(t,\omega)|=\liminf_{t\to\infty}
|X(t,\omega)|+\lim_{t\to\infty} |Y(t,\omega)|=0.
\]
Let $\eta\in (0,1)$. We next show that $\limsup_{t\to\infty}
|z(t,\omega)|\leq \eta$. Since $f$ is continuous on $\mathbb{R}$, it is uniformly continuous on $[-2,2]$. Therefore,
there exists a function $\mu:[0,\infty)\to [0,\infty)$ such that $\mu(0)=0$, $\mu(\nu)\to 0$ as $\nu\downarrow 0$, and for which for every $\nu\in [0,4]$ is defined by
\[
\mu(\nu)=\max_{|x|\vee|y|\leq 2,|x-y|\leq \nu}|f(x)-f(y)|.
\]
Thus $\mu$ is a modulus of continuity of $f$ on $[-2,2]$. Let $\epsilon>0$ be so small that
\[
\epsilon<\frac{\eta}{4}, \quad \epsilon+\mu(\epsilon)<f(\eta)\wedge|f(-\eta)|.
\]
Then for $u\in [\eta-\epsilon,\eta+\epsilon] \subset(0,2)$ we have
$|f(u)-f(\eta)|\leq \mu(\epsilon)$, so $f(u)\geq f(\eta)-\mu(\epsilon)>\epsilon$. Therefore
\begin{equation} \label{eq.f1}
\epsilon<\inf_{u\in (\eta-\epsilon,\eta+\epsilon)} f(u).
\end{equation}
On the other hand for $u\in [-\eta-\epsilon,-\eta+\epsilon]\subset
(-2,0)$ we have $|f(u)-f(-\eta)|\leq \mu(\epsilon)$, so
\[
f(u)\leq f(-\eta)+\mu(\epsilon)<-\epsilon.
\]
Therefore
\begin{equation} \label{eq.f2}
-\epsilon>\sup_{u\in (\eta-\epsilon,\eta+\epsilon)} f(u).
\end{equation}

Since $Y(t,\omega)\to 0$ as $t\to\infty$, there exists
$T_1(\epsilon,\omega)>0$ such that $|Y(t,\omega)|<\epsilon$ for all
$t>T_1(\epsilon)$. Suppose that $\limsup_{t\to\infty}
|z(t,\omega)|>\eta$. Since $\liminf_{t\to\infty} |z(t,\omega)|=0$,
we may therefore define
$T_2(\epsilon,\omega)=\inf\{t>T_1(\epsilon,\omega):|z(t,\omega)|=\eta/2\}$.
Also define
$T_3(\epsilon,\omega)=\inf\{t>T_2(\epsilon,\omega):|z(t,\omega)|=\eta\}$.

In the case when $z(T_3(\epsilon,\omega),\omega)=\eta$, we have that
$z'(T_3(\epsilon,\omega),\omega)\geq 0$. Since
$|Y(T_3(\epsilon,\omega),\omega)|< \epsilon$ we have
\begin{align*}
0&\leq z'(T_3(\epsilon,\omega),\omega)=-f(z(T_3(\epsilon,\omega))+Y(T_3(\epsilon,\omega),\omega))+Y(T_3(\epsilon,\omega),\omega)\\
&=-f(\eta+Y(T_3(\epsilon,\omega),\omega))+Y(T_3(\epsilon,\omega),\omega)\\
&<-f(\eta+Y(T_3(\epsilon,\omega),\omega))+\epsilon \leq
-\inf_{|u-\eta|<\epsilon} f(u) + \epsilon<0,
\end{align*}
by \eqref{eq.f1}, a contradiction. On the other hand, in the case
when $z(T_3(\epsilon,\omega),\omega))=-\eta$, we have that
$z'(T_3(\epsilon,\omega),\omega)\leq 0$. Since
$|Y(T_3(\epsilon,\omega),\omega)|< \epsilon$ we have
\begin{align*}
0&\geq
z'(T_3(\epsilon,\omega),\omega)=-f(z(T_3(\epsilon,\omega),\omega)+Y(T_3(\epsilon,\omega),\omega))
+Y(T_3(\epsilon,\omega))\\
&=-f(-\eta+Y(T_3(\epsilon,\omega),\omega))+Y(T_3(\epsilon,\omega),\omega)\\
&>-f(-\eta+Y(T_3(\epsilon,\omega),\omega))-\epsilon \geq
-\sup_{|u+\eta|<\epsilon} f(u) - \epsilon>0,
\end{align*}
by \eqref{eq.f2}, a contradiction. Hence $T_3(\epsilon,\omega)$ does
not exist for any $\omega\in A_4$. Hence $\limsup_{t\to\infty}
|z(t,\omega)|\leq \eta$. Since $\eta>0$ is arbitrary, we make take
the limit as $\eta\downarrow 0$ to obtain $\limsup_{t\to\infty}
|z(t,\omega)|=0$. Since $X=Y+z$, and $Y(t,\omega)\to 0$ as
$t\to\infty$, we have that $X(t,\omega)\to 0$ as $t\to\infty$, and because this is true for each $\omega$ in the a.s. event $A_4$, the result has been proven.

\subsection{Proof of Part (C) of Proposition~\ref{prop.yimpliesx}}
Let the a.s. event $\Omega_e$ be as defined in \eqref{def.Omegae}.
Define $\Omega_3=\{\omega\in \Omega_e: \limsup_{t\to
\infty}|Y(t,\omega)|=+\infty\}$ which is a.s. by hypothesis. Define
$F(t)=X(t)-f(X(t))$ for $t\geq 0$. Then \eqref{eq.sde} can be
rewritten as
\[
dX(t)=\left\{-X(t)+F(t)\right\}\,dt+\sigma(t) \,dB(t), \quad t\geq
0,
\]
so by variation of constants we get
\[
X(t)=X(0)e^{-t}+\int^{t}_{0}e^{-(t-s)}F(s)ds+ Y(t), \quad t\geq 0.
\]
Rearranging and taking absolute values gives
\begin{equation} \label{eq.estY}
|Y(t)|\leq |X(t)|+|X(0)|e^{-t}+\int^{t}_{0}e^{-(t-s)}|F(s)|\,ds,
\quad t\geq 0.
\end{equation}
Define $A_5=\left\{\omega\in \Omega_X: \sup_{t\geq
0}|X(t,\omega)|<+\infty \right\}$ and suppose that $\mathbb{P}[A_5]>0$.
Define $A_6=A_5\cap \Omega_3$. Then
$\mathbb{P}[A_6]=\mathbb{P}[A_5]>0$. Let $\omega \in A_6$ and
define $X_1(\omega)=\sup_{t\geq 0}|X(t,\omega)|$. Then
$|X(t,\omega)|\leq X_1(\omega)$ for all $t\geq 0$. Since
$f$ is continuous, for all $y\geq 0$, there exists
$\overline{f}(y)<+\infty$ such that
\begin{equation} \label{eq.Fbar}
\max_{|x|\leq y}|f(x)|=:\overline{f}(y).
\end{equation}
Therefore $|f(X(t,\omega))|\leq \overline{f}(X_1(\omega))$ for all
$t\geq 0$. Hence by \eqref{eq.estY},  for each $\omega\in A_6$, we
have that for all $t\geq 0$
\begin{align*}
|Y(t,\omega)|&\leq X_1(\omega)+X_1(\omega)+\int^{t}_{0}e^{-(t-s)}(X_1(\omega)+\overline{f}(X_1(\omega)))\,ds\\
&\leq 3X_1(\omega)+\overline{f}(X_1(\omega)).
\end{align*}
Since $\limsup_{t\to \infty}|Y(t,\omega)|=+\infty$ for each
$\omega\in A_6\subseteq \Omega_3$, we have a contradiction, so
therefore we must have $\mathbb{P}[A_6]=0$. This, taken together with
continuity the continuity of $X$, gives
$\limsup_{t\to\infty}|X(t)|=\infty$ a.s., proving part (C) of
Proposition~\ref{prop.yimpliesx}.

\subsection{Proof of Part (B) of Proposition~\ref{prop.yimpliesx}}
Define $\Omega_2=\Omega_1\cap \Omega_e$. Then by hypothesis, for every $\omega\in \Omega_2$ we have
that there is a finite and positive $Y^\ast(\omega)$ such that
\[
Y^\ast(\omega)=\limsup_{t\to\infty} |Y(t,\omega)|.
\]
By definition $\underline{Y}\leq Y^\ast(\omega)\leq \overline{Y}$. Define for $\omega\in \Omega_2$
\[
X^\ast(\omega)=\limsup_{t\to\infty} |X(t,\omega)|,
\]
where $X^\ast(\omega)=0$ and $X^\ast(\omega)=+\infty$ are admissible values.
By \eqref{eq.estY}, we have
\begin{equation*}
Y^\ast(\omega) \leq X^\ast(\omega)+\limsup_{t\to\infty}\int^{t}_{0}e^{-(t-s)}|F(s,\omega)|\,ds
\leq X^\ast(\omega)+\limsup_{t\to\infty} |F(t,\omega)|.
\end{equation*}
By the definition of $\overline{f}$, $F$ and $X^\ast$ we have
\[
\limsup_{t\to\infty}  |F(t,\omega)| \leq X^\ast(\omega)+\overline{f}(X^\ast(\omega)).
\]
Since $\overline{f}$ is defined by \eqref{eq.Fbar} and $h_f$ by \eqref{def.hf}, we obtain
\[
Y^\ast(\omega)\leq 2X^\ast(\omega)+\overline{f}(X^\ast(\omega))=h_f(X^\ast(\omega)).
\]
By Proposition~\ref{prop.underline}, $h_f$ is an increasing
function, so we have $X^\ast(\omega)\geq h_f^{-1}(Y^\ast(\omega))$.
Now by the definition of $X^\ast$, $\underline{X}$ and the fact that
$h_f^{-1}$ is increasing, we have
\[
\underline{X}=\inf_{\omega\in \Omega_2} X^\ast(\omega)\geq \inf_{\omega\in \Omega_2} h_f^{-1}(Y^\ast(\omega))
= h_f^{-1}\left(\inf_{\omega\in \Omega_2} Y^\ast(\omega)\right).
\]
Since $\Omega_2\subseteq \Omega_1$, $\inf_{\omega\in \Omega_2}
Y^\ast(\omega)\geq \inf_{\omega\in \Omega_1}
Y^\ast(\omega)=\underline{Y}$, by the definition of $\underline{Y}$.
Thus as $h_f^{-1}$ is increasing,
\[
\underline{X}\geq  h_f^{-1}\left(\inf_{\omega\in \Omega_2} Y^\ast(\omega)\right)
\geq  h_f^{-1}(\underline{Y})=\underline{x}(f,\underline{Y}),
\]
using \eqref{eq.underxhfinv} at the last step. Notice lastly that part (i) of Proposition~\ref{prop.underline} implies that $\underline{x}(f,\underline{Y})>0$ because $\underline{Y}>0$, by hypothesis.
\section{Proofs of Theorem~\ref{theorem.Xbounded} and
Proposition~\ref{prop.overline}}   \label{sec:s10}
\subsection{Preliminary results}
The asymptotic estimate \eqref{eq.limsupXuprboundfinal} in Theorem~\ref{theorem.Xbounded} is shown by first establishing the estimate
\begin{equation} \label{eq.limsupXuprbound1}
\limsup_{t\to\infty} |X(t,\omega)|\leq
\max(x_+(\overline{Y}),x_-(\overline{Y}))+\overline{Y},
\quad\text{for each $\omega\in \Omega_2$}
\end{equation}
where we define $x_+,x_-:[0,\infty)\to \mathbb{R}$  by
\begin{align} \label{def.xplus}
x_+(y)&=\sup\{x>0: \min_{a\in [-y,y]} f(x+a)=y\},\quad y\geq 0,\\
\label{def.xminus} -x_-(y)&=\inf\{x<0: \max_{a\in [-y,y]}
f(x+a)= -y\}, \quad y\geq 0.
\end{align}
We prefer the estimate in \eqref{eq.limsupXuprboundfinal} in part
because the estimate on the right hand side of
\eqref{eq.limsupXuprbound1} is difficult to analyse in general, due
to the complexity of $x_+$ and $x_-$. Moreover, there is no loss of
sharpness in the estimate in \eqref{eq.limsupXuprboundfinal}
relative to \eqref{eq.limsupXuprbound1} in the case when $f$ is
increasing. To see this, first note that when $f$ is increasing on
$\mathbb{R}$, it can readily be seen that $x_+(y)=y+f^{-1}(y)$ and
$x_-(y)=y-f^{-1}(-y)$. Therefore, if we grant that
\eqref{eq.limsupXuprbound1} holds, it follows that
\[
\limsup_{t\to\infty} |X(t,\omega)|\leq
2\overline{Y}+\max(f^{-1}(\overline{Y}),-f^{-1}(\overline{Y})),
\quad\text{for each $\omega\in \Omega_2$.}
\]
Therefore, if we define
\begin{equation}  \label{eq.overlineXfinv}
\overline{x}^\ast(f,y) =2y +
\max(f^{-1}(y),-f^{-1}(-y)),
\end{equation}
it can be seen that
\[
\limsup_{t\to\infty} |X(t,\omega)| \leq
\overline{x}^\ast(f,\overline{Y}), \quad \text{for each $\omega\in
\Omega_2$.}
\]
On the other hand, $\overline{x}^\ast(f)$ defined in
\eqref{eq.overlineXfinv} is equal to $\overline{x}(f)$ defined in
\eqref{def.overlineX} when $f$ is increasing, because
$f^-(x)=f^{-1}(x)$ for $x\leq 0$ and $f^+(x)=f^{-1}(x)$ for $x\geq
0$, where $f^+$ and $f^-$ are defined in \eqref{def.fgeninv} and
\eqref{def.fgeninvneg}.

Therefore, the second stage in proving the asymptotic estimate
\eqref{eq.limsupXuprboundfinal} reduces to showing that
\begin{equation} \label{eq.xpxmovx}
y+\max(x_+(y),x_-(y))\leq \overline{x}(f,y), \quad y\geq 0,
\end{equation}
and accordingly, we start the proof of Theorem~\ref{theorem.Xbounded} by first establishing \eqref{eq.xpxmovx}.
\begin{lemma} \label{prop.upperboundX}
Suppose that $f$ obeys \eqref{eq.fglobalunperturbed} and \eqref{h.finfty}. Then the functions $f^+$
and $f^-$ given by \eqref{def.fgeninv} and \eqref{def.fgeninvneg}
are well--defined and with $x_+$, $x_-$ and $\overline{x}$ defined by \eqref{def.xplus}, \eqref{def.xminus}
and \eqref{def.overlineX} respectively, we have \eqref{eq.xpxmovx}.
%
\end{lemma}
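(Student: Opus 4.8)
The plan is to prove the two assertions of the lemma in turn: first that $f^+$ and $f^-$ are well defined, and then the inequality \eqref{eq.xpxmovx}, which I will reduce to a pair of one-sided bounds each handled by the same contradiction argument. Fix $y>0$. Since $f(0)=0$ and $\lim_{z\to\infty}f(z)=+\infty$ by \eqref{eq.fglobalunperturbed} and \eqref{h.finfty}, the intermediate value theorem produces a $z>0$ with $f(z)=y$, so the set in \eqref{def.fgeninv} is nonempty, and the divergence of $f$ at $+\infty$ furnishes a bound $Z$ with $f(z)>y$ for $z>Z$, so the set is bounded above and $f^+(y)$ is finite; the symmetric argument on the left tail handles $f^-(-y)$. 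The structural fact I will extract here — and which drives the whole proof — is a \emph{monotone-tail property}: $f(z)>y$ for every $z>f^+(y)$, and $f(z)<-y$ for every $z<f^-(-y)$. Each follows by contradiction: if $f(z_0)<y$ for some $z_0>f^+(y)$, then since $f(z)\to+\infty$ there is $z_1>z_0$ with $f(z_1)>y$, and the intermediate value theorem yields a root of $f=y$ exceeding $f^+(y)$, contradicting the definition of $f^+(y)$ as the supremum of such roots. (The endpoint $y=0$ is vacuous, or handled by the convention that an empty supremum does not exceed the nonnegative right-hand side.)

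With these facts in hand, I will split \eqref{eq.xpxmovx} into the two bounds
\[
x_+(y)\le y+f^+(y), \qquad x_-(y)\le y-f^-(-y).
\]
Indeed, adding $y$ and taking the maximum then gives $y+\max(x_+(y),x_-(y))\le 2y+\max(f^+(y),-f^-(-y))$, which is precisely $\overline{x}(f,y)$ as defined in \eqref{def.overlineX}. So it suffices to establish the two displayed inequalities separately.

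For the first bound, take any admissible $x>0$ in \eqref{def.xplus}, so that after the substitution $u=x+a$ we have $\min_{u\in[x-y,x+y]}f(u)=y$. Suppose for contradiction that $x>y+f^+(y)$. Then every $u\in[x-y,x+y]$ satisfies $u\ge x-y>f^+(y)$, so by the monotone-tail property $f(u)>y$ throughout; since the minimum over the compact interval is attained, it is \emph{strictly} larger than $y$, contradicting that it equals $y$. Hence every admissible $x$ obeys $x\le y+f^+(y)$, and passing to the supremum gives $x_+(y)\le y+f^+(y)$. The estimate on $x_-(y)$ is entirely symmetric: for any $x<0$ with $\max_{u\in[x-y,x+y]}f(u)=-y$, assuming $x+y<f^-(-y)$ forces $f(u)<-y$ on the whole interval, so the attained maximum is strictly below $-y$, a contradiction; hence $x+y\ge f^-(-y)$ for all such $x$, whence $\inf\{x<0:\dots\}\ge f^-(-y)-y$, and therefore, using \eqref{def.xminus}, $x_-(y)=-\inf\{x<0:\dots\}\le y-f^-(-y)$.

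The main obstacle is the monotone-tail property; everything after it is a short deduction. It is exactly here that both hypotheses are used jointly — \eqref{eq.fglobalunperturbed} pins down $f(f^+(y))=y$ by continuity and locates roots through the sign condition, while \eqref{h.finfty} guarantees the tails diverge so that the intermediate value argument can manufacture the contradictory extra root. The other delicate point is arguing \emph{strict} inequalities (so that the attained minimum exceeds $y$, respectively the attained maximum falls below $-y$), which is where the continuity of $f$ on the compact shifted interval $[x-y,x+y]$ — ensuring the extremum is attained — is invoked.
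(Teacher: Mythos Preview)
Your proof is correct and follows essentially the same route as the paper: both arguments reduce \eqref{eq.xpxmovx} to the two one-sided bounds $x_+(y)\le y+f^+(y)$ and $x_-(y)\le y-f^-(-y)$, and both derive these from the tail property that $f(z)>y$ for all $z>f^+(y)$ (respectively $f(z)<-y$ for $z<f^-(-y)$). The only difference is cosmetic: the paper asserts the tail property as an immediate consequence of the definition of $f^+$, whereas you supply the short intermediate-value-theorem justification and also spell out well-definedness; and the paper phrases the bound on $x_+$ directly (showing the defining condition fails for every $z>y+f^+(y)$) rather than by contradiction on a putative admissible $x$.
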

\begin{proof}
Let $z>x+f^+(x)$. Suppose $u\in [-x,x]$. Then $z+u>f^+(x)$. By the
definition of $f^+$ we have $f(a)>x$ for all $a>f^+(x)$. Therefore,
for each $z>x+f^+(x)$, we have $f(z+u)>x$ for all $u\in [-x,x]$.
Hence
\[
\min_{u\in [-x,x]} f(z+u)>x, \quad \text{for all $z>x+f^+(x)$}.
\]
Since $x_+(y)=\sup\{x>0: \min_{u\in [-y,y]} f(x+u)=y\}$, we have that
\begin{equation} \label{eq.iplusineq}
y+f^+(y)\geq x_+(y).
\end{equation}

Let $x>0$. Let $z<-x+f^-(-x)$. Suppose $u\in [-x,x]$. Then
$z+u<f^-(-x)$. By the definition of $f^-$ we have $f(a)<-x$ for all
$a<f^-(-x)$. Therefore, for each $z<-x+f^-(-x)$, we have $f(z+u)<-x$
for all $u\in [-x,x]$. Hence
\[
\max_{u\in [-x,x]} f(z+u)<-x, \quad \text{for all $z<-x+f^-(-x)$}.
\]
Since $-x_-(y)=\inf\{x>0: \max_{u\in [-y,y]}
f(x+u)=-y\}$, we have that
$-y+f^-(-y)\leq -x_-(y)$, so
\begin{equation} \label{eq.iminusineq}
y-f^-(-y)\geq x_-(y).
\end{equation}
Hence by \eqref{def.overlineX}, \eqref{eq.iplusineq}, \eqref{eq.iminusineq}, for any $y\geq 0$ we have
\begin{align*}
\overline{x}(f,y)&=2y+\max(f^+(y),-f^-(-y))\\
&=y+\max(y+f^+(y),y-f^-(-y))\\
&\geq y+\max(x_+(y),x_-(y)),
\end{align*}
which is \eqref{eq.xpxmovx}.
\end{proof}

\subsection{Proof of Theorem~\ref{theorem.Xbounded}}
We start with a lemma.
\begin{lemma}   \label{lemma.odelimsup}
Let $f$ obey \eqref{eq.fglobalunperturbed} and \eqref{h.finfty}. Suppose that $p$ is a continuous function
such that
\[
\limsup_{t\to\infty} |p(t)|\leq \overline{p}.
\]
Suppose that $z$ is any continuous solution of
\[
z'(t)=-f(z(t)+p(t))+p(t), \quad t>0; \quad z(0)=\xi
\]
Then
\[
\limsup_{t\to\infty} |z(t)|\leq \max(x_+(\overline{p}),x_-(\overline{p}))\leq \overline{p}+\max(f^+(\overline{p}),-f^-(-\overline{p}),
\]
where $x_+$ is defined by \eqref{def.xplus}, $x_-$ by \eqref{def.xminus} and $f^\pm$ by \eqref{def.fgeninv}, \eqref{def.fgeninvneg}. Moreover, if $x(t)=z(t)+p(t)$ for $t\geq 0$,
and $\overline{x}$ is defined by \eqref{def.overlineX}, then
\[
\limsup_{t\to\infty} |x(t)| \leq \overline{x}(f,\overline{p}).
\]
\end{lemma}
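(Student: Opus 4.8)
The second inequality in the displayed chain is a restatement of \eqref{eq.xpxmovx} of Lemma~\ref{prop.upperboundX} (applied with $y=\overline{p}$), so it may be quoted directly; and the final ``Moreover'' assertion follows once the first inequality is in hand, since $|x(t)|\le |z(t)|+|p(t)|$ gives
\[
\limsup_{t\to\infty}|x(t)|\le \limsup_{t\to\infty}|z(t)|+\limsup_{t\to\infty}|p(t)|\le \max(x_+(\overline{p}),x_-(\overline{p}))+\overline{p}\le \overline{x}(f,\overline{p}),
\]
the last step again being \eqref{eq.xpxmovx}. Thus the whole lemma reduces to proving $\limsup_{t\to\infty} z(t)\le x_+(\overline{p})$ and $\liminf_{t\to\infty} z(t)\ge -x_-(\overline{p})$, which together yield $\limsup_{t\to\infty}|z(t)|\le \max(x_+(\overline{p}),x_-(\overline{p}))$. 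These two bounds are proved by the same barrier method, which generalises the argument used for part (A) of Proposition~\ref{prop.yimpliesx}, so I describe only the first.

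Fix $\epsilon>0$ and put $y=\overline{p}+\epsilon$. Since $\limsup_{t\to\infty}|p(t)|\le\overline{p}$, there is $T_1$ with $|p(t)|<y$ for all $t\ge T_1$. Introduce $g_y(x)=\min_{a\in[-y,y]}f(x+a)$; this is continuous in $x$, tends to $+\infty$ as $x\to\infty$ by \eqref{h.finfty}, and from the definition \eqref{def.xplus} of $x_+$ together with the intermediate value theorem (the continuous function $g_y-y$ cannot vanish on $(x_+(y),\infty)$ and is positive at $+\infty$) one checks that $g_y(x)>y$ for every $x>x_+(y)$. Now fix any $\eta>x_+(y)$ and set $c=\inf_{x\ge\eta}(g_y(x)-y)$, which is a positive number because $g_y-y$ is continuous, positive on $[\eta,\infty)$ and tends to $+\infty$. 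Then for $t\ge T_1$ with $z(t)\ge\eta$ we have, writing $p(t)\in[-y,y]$,
\[
z'(t)=-f(z(t)+p(t))+p(t)\le -g_y(z(t))+y\le -c<0.
\]
Consequently $z$ can never cross the level $\eta$ upwards after $T_1$: if $z(t)>\eta$ held for all $t\ge T_1$ then $z'\le-c$ would force $z\to-\infty$, a contradiction; and once $z(t_0)\le\eta$ for some $t_0\ge T_1$, the strict inequality $z'<0$ at any return to $\eta$ prevents an upcrossing, so $z(t)\le\eta$ for all $t\ge t_0$. In either case $\limsup_{t\to\infty}z(t)\le\eta$, and letting $\eta\downarrow x_+(y)$ gives $\limsup_{t\to\infty}z(t)\le x_+(\overline{p}+\epsilon)$.

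It remains to send $\epsilon\downarrow 0$, and this is the only delicate point: because $f$ is not assumed monotone, $x_+$ need not be monotone, so one cannot simply pass to the limit in $x_+(\overline{p}+\epsilon)$. What is needed is the right upper semicontinuity
\[
\limsup_{\epsilon\downarrow 0} x_+(\overline{p}+\epsilon)\le x_+(\overline{p}).
\]
I would obtain this from the joint continuity of $(x,y)\mapsto g_y(x)$ (a minimum of the continuous $f$ over the continuously varying compact interval $[x-y,x+y]$) together with the identity $g_y(x_+(y))=y$, which holds because the supremum in \eqref{def.xplus} is attained. Indeed, the values $x_+(\overline{p}+\epsilon)$ stay bounded as $\epsilon\downarrow 0$ (since $g_y(x)>y$ for all large $x$ uniformly in $y$ near $\overline{p}$), so any limit point $L$ of a sequence $x_+(\overline{p}+\epsilon_n)$ with $\epsilon_n\downarrow 0$ satisfies $g_{\overline{p}}(L)=\overline{p}$ by joint continuity, whence $L\le x_+(\overline{p})$. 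Since $\limsup_{t\to\infty}z(t)\le\inf_{\epsilon>0}x_+(\overline{p}+\epsilon)$, this yields $\limsup_{t\to\infty}z(t)\le x_+(\overline{p})$. The lower bound $\liminf_{t\to\infty}z(t)\ge -x_-(\overline{p})$ follows by the symmetric argument, replacing $g_y$ by $h_y(x)=\max_{a\in[-y,y]}f(x+a)$ and using \eqref{def.xminus} (a downcrossing of $-\eta$ is excluded because there $z'(t)=-f(z(t)+p(t))+p(t)>-h_y(-\eta)-y>0$). Combining the two bounds gives $\limsup_{t\to\infty}|z(t)|\le\max(x_+(\overline{p}),x_-(\overline{p}))$, and invoking \eqref{eq.xpxmovx} as above completes the proof. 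The main obstacle throughout is exactly the $\epsilon\downarrow 0$ passage, i.e. the right upper semicontinuity of $x_\pm$; the barrier estimate itself is routine.
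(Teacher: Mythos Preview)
Your proof is correct and follows essentially the same barrier argument as the paper: bound $|p(t)|$ by $\overline{p}+\eta$ after some time, show that $z'$ is bounded away from zero with the appropriate sign once $z$ exceeds a suitable level, and conclude the trapping estimate. The paper uses an implicitly defined barrier $\tilde{x}_+(\eta)$ (satisfying $\min_{|a|\le \overline{p}+\eta}f(x+a)\ge\overline{p}+2\eta$ for $x\ge\tilde{x}_+(\eta)$) and then simply asserts the passage to $x_+(\overline{p})$ as $\eta\downarrow 0$ without further comment; your version places the barrier at $x_+(\overline{p}+\epsilon)$ and supplies the missing step explicitly via the right upper semicontinuity of $x_+$, which is a genuine improvement in rigor over the paper's presentation.
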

\begin{proof}
For every $\eta>0$, there exists $T(\eta)>0$ such that for $t\geq T(\eta)$ we have $|p(t)|\leq \overline{p}+\eta$.

The bound on $p$ yields the estimate
\[
z(t)-\overline{p}-\eta \leq z(t)+p(t)\leq z(t)+\overline{p}+\eta,
\quad t\geq T(\eta).
\]
Since $f(x)\to\infty$ as $x\to\infty$, for every $\eta>0$ there
exists $\tilde{x}_+(\eta)>\eta$ such that
\[
\min_{a\in [-\overline{p}-\eta,\overline{p}+\eta]} f(x+a)\geq
\overline{p}+2\eta, \quad \text{for all $x\geq \tilde{x}_+(\eta)$}.
\]
Note that $x_+$ defined by \eqref{def.xplus} obeys
\begin{equation} \label{eq.xplus}
\min_{a\in [-\overline{p},\overline{p}]} f(x+a)\geq \overline{p},
\quad \text{for all $x\geq x_+(\overline{p})$}.
\end{equation}
Also as $f(x)\to-\infty$ as $x\to-\infty$, for every $\eta>0$ there
exists an $\tilde{x}_-(\eta)>\eta$ such that
\[
\max_{a\in [-\overline{p}-\eta,\overline{p}+\eta]} f(x+a)\leq
-\overline{p}-2\eta, \quad\text{for all $x\leq -\tilde{x}_-(\eta)$}.
\]
Note that $x_-$ defined by \eqref{def.xplus} obeys
\begin{equation} \label{eq.xminus}
\max_{a\in [-\overline{p},\overline{p}]} f(x+a)\leq -\overline{p},
\quad\text{for all $x\leq -x_-(\overline{p})$}.
\end{equation}
Let $x(\eta)=\max(\tilde{x}_+(\eta),\tilde{x}_-(\eta))$.

Suppose that there is $t_1(\eta)>T(\eta)$ such that
$z(t_1)>\tilde{x}_+(\eta)$. If not, it follows that
\[
z(t)\leq \tilde{x}_+(\eta) \text{ for all $t\geq T(\eta)$}
\]
and we have that $\limsup_{t\to\infty} z(t)\leq \tilde{x}_+(\eta)$,
which implies that $\limsup_{t\to\infty} z(t)\leq
x_+(\overline{p})$. We will show that there exists a
$t_2(\eta)>t_1(\eta)$ such that $z(t_2)=\tilde{x}_+(\eta)$ and
moreover for all $t\geq t_2(\eta)$ that $z(t)\leq
\tilde{x}_+(\eta)$. This implies that $\limsup_{t\to\infty} z(t)\leq
\tilde{x}_+(\eta)$ or indeed that $\limsup_{t\to\infty} z(t)\leq
x_+(\overline{p})$.

By the definition of $t_1$ we have $z(t_1)+p(t_1)>0$ and
\[
z'(t_1)=-f(z(t_1)+p(t_1))+p(t_1)\leq -\min_{a\in
[-\overline{p}-\eta,\overline{p}+\eta]} f(z(t_1)+a) +
\overline{p}+\eta\leq -\eta.
\]
Then we have either that $z(t)>\tilde{x}_+(\eta)$ for all $t\geq
t_1(\eta)$ or that there is a minimal $t_2(\eta)>t_1(\eta)$ such
that $z(t_2)=\tilde{x}_+(\eta)$. In the former case for every $t\geq
t_1(\eta)$ we have
\[
z'(t)=-f(z(t)+p(t))+p(t)\leq -\min_{a\in
[-\overline{p}-\eta,\overline{p}+\eta]} f(z(t)+a) +
\overline{p}+\eta\leq -\eta.
\]
Since $\eta>0$, we may define $t_3=(z(t_1)-\tilde{x}_+(\eta))/\eta +
t_1+1$. Then $z(t_3)\leq z(t_1)-\eta(t_3-t_1)<\tilde{x}_+(\eta)$, a
contradiction. Therefore, there exists a $t_2>t_1$ such that
$z(t_2)=\tilde{x}_+(\eta)$. Now
\[
z'(t_2)=-f(z(t_2)+p(t_2))+p(t_2)\leq -\min_{a\in [-\overline{p}-\eta,\overline{p}+\eta]} f(\tilde{x}_+(\eta)+a)
+\overline{p}+\eta\leq -\eta.
\]
Then either there exists a minimal $t_3(\eta)>t_2(\eta)$ such that
$z(t_3)=\tilde{x}_+(\eta)$ or we have that $z(t)< \tilde{x}_+(\eta)$
for all $t> t_2(\eta)$. In the former case, we must have
$z'(t_3)\geq 0$. But once again we have
\[
z'(t_3)=-f(z(t_3)+p(t_3))+p(t_3)\leq -\min_{a\in[-\overline{p}-\eta,\overline{p}+\eta]} f(\tilde{x}_+(\eta)+a) +
\overline{p}+\eta\leq -\eta,
\]
a contradiction. Thus we have $z(t)< \tilde{x}_+(\eta)$ for all $t>
t_2(\eta)$, which implies that $\limsup_{t\to\infty} z(t)\leq
x_+(\overline{p})$.

Suppose that there is $t_1(\eta)>T(\eta)$ such that $z(t_1)<-\tilde{x}_-(\eta)$. If not, it follows that
\[
z(t)\geq -\tilde{x}_-(\eta) \text{ for all $t\geq T(\eta)$}
\]
and we have that $\liminf_{t\to\infty} z(t)\geq -\tilde{x}_-(\eta)$,
which implies that $\liminf_{t\to\infty} z(t)\geq -x_-(\overline{p})$. We will show that there is a
$t_2(\eta)>t_1(\eta)$ such that $z(t_2)=-\tilde{x}_-(\eta)$ and moreover that for all $t\geq t_2(\eta)$ that
$z(t)\geq -\tilde{x}_-(\eta)$. This will imply that $\liminf_{t\to\infty} z(t)\geq -\tilde{x}_-(\eta)$ or that $\liminf_{t\to\infty} z(t)\geq -x_-(\overline{p})$.

By the definition of $t_1$ we have $z(t_1)+p(t_1)<0$ and
\[
z'(t_1)=-f(z(t_1)+p(t_1))+p(t_1)
\geq -\max_{a\in [-\overline{p}-\eta,\overline{p}+\eta]} f(z(t_1)+a) -\overline{p}-\eta\geq \eta.
\]
Then we have either that $z(t)<-\tilde{x}_-(\eta)$ for all $t\geq
t_1(\eta)$ or that there is a minimal $t_2(\eta)>t_1(\eta)$ such
that $z(t_2)=-\tilde{x}_-(\eta)$. In the former case for every
$t\geq t_1(\eta)$ we have
\[
z'(t)=-f(z(t)+p(t))+p(t)
\geq -\max_{a\in [-\overline{p}-\eta,\overline{p}+\eta]} f(z(t)+a) -\overline{p}-\eta\geq \eta.
\]
Since $\eta>0$, we may define $t_3=(z(t_1)+\tilde{x}_-(\eta))/-\eta
+ t_1+1$. Then
$z(t_3)\geq z(t_1)+\eta(t_3-t_1)>-\tilde{x}_-(\eta)$, a contradiction. Therefore, there exists a $t_2>t_1$ such that
$z(t_2)=-\tilde{x}_-(\eta)$. Now
\[
z'(t_2)=-f(z(t_2)+p(t_2))+p(t_2)
\geq -\max_{a\in [-\overline{p}-\eta,\overline{p}+\eta]} f(\tilde{x}_+(\eta)+a) -\overline{p}-\eta\geq \eta.
\]
Then either there exists a minimal $t_3(\eta)>t_2(\eta)$ such that
$z(t_3)=-\tilde{x}_-(\eta)$ or we have that $z(t)>-\tilde{x}_-(\eta)$ for all $t> t_2(\eta)$. In the former case, we
must have $z'(t_3)\leq 0$. But once again we have
\[
z'(t_3)=-f(z(t_3)+p(t_3))+p(t_3)
\geq -\max_{a\in [-\overline{p}-\eta,\overline{p}+\eta]} f(\tilde{x}_+(\eta)+a) -\overline{p}-\eta\geq \eta,
\]
a contradiction. Thus we have $z(t)> -\tilde{x}_-(\eta)$ for all $t>
t_2(\eta)$, which implies that $\liminf_{t\to\infty} z(t)\geq
-x_-(\overline{p})$.

We have thus shown that
\[
\limsup_{t\to\infty} z(t)\leq x_+(\overline{p}), \quad
\liminf_{t\to\infty} z(t)\geq -x_-(\overline{p}),
\]
and so $\limsup_{t\to\infty} |z(t)|\leq \max(x_+(\overline{p}),x_-(\overline{p}))$, as required.

Since $\limsup_{t\to\infty} |p(t)|\leq \overline{p}$, it follows that
\[
\limsup_{t\to\infty} |x(t)| \leq \overline{p}+\max(x_+(\overline{p}),x_-(\overline{p})).
\]
Therefore using Lemma~\ref{prop.upperboundX} (specifically
\eqref{eq.xpxmovx}), we have
\[
\limsup_{t\to\infty} |x(t)|\leq \overline{p}+\max(x_+(\overline{p}),x_-(\overline{p}))\leq
\overline{x}(f,\overline{p}),
\]
which is precisely the final estimate required. 
\end{proof}

\subsection{Proof of Theorem~\ref{theorem.Xbounded}}
Let $\Omega_e$ be the event defined in \eqref{def.Omegae}. Then for
every $\omega\in \Omega_e$ we may define
$z(t,\omega):=X(t,\omega)-Y(t,\omega)$ for $t\geq 0$ where $Y$ is
the solution of \eqref{eq.linsde}. Then $z(0)=X(0)$ and each sample
path of $z$ is in $C^1(0,\infty)$ with
\[
z'(t,\omega)=-f(z(t,\omega)+Y(t,\omega))+Y(t,\omega), \quad t>0.
\]
If $\theta$ obeys \eqref{eq.theta1} and \eqref{eq.theta2}, it
follows from part (B) of Theorem~\ref{theorem.Ybounded} that there
exists an a.s. event $\Omega_1$, defined by \eqref{def.Omega1}, such
that there is a finite, positive and deterministic $\overline{Y}$
satisfying \eqref{def.overY} i.e.
\[
\overline{Y}=\sup_{\omega\in \Omega_1} \limsup_{t\to\infty}
|Y(t,\omega)|.
\]
Let $\Omega_2=\Omega_1\cap \Omega_e$. Fix $\omega\in \Omega_2$. Then by Lemma~\ref{lemma.odelimsup}, with
$Y(\cdot,\omega)$ in the role of $p$, and $z(\cdot,\omega)$ in the role of $z$, we have that
\[
\limsup_{t\to\infty} |z(t,\omega)|\leq \max(x_+(\overline{Y}),x_-(\overline{Y})).
\]
Putting $X(\cdot,\omega)$ in the role of $x$ in Lemma~\ref{lemma.odelimsup}, we can infer from
Lemma~\ref{lemma.odelimsup} that for $\omega\in \Omega_2$
\[
\limsup_{t\to\infty} |X(t,\omega)|\leq \overline{x}(f,\overline{Y}).
\]
Since this estimate holds for all $\omega\in \Omega_2$, we have precisely \eqref{eq.limsupXuprboundfinal}, as required.

\subsection{Proof of Proposition~\ref{prop.overline}}
For a given $f$, $f^+$ and $f^-$ are non--decreasing functions. We
show first that $\lim_{x\to 0} f^+(x)=0$.

Since $f(x)\to\infty$ as $x\to\infty$, there exists $a>0$ such that
$f(x)\geq 1$ for all $x\geq a$. Let $\epsilon$ be any positive
number with $\epsilon<a$. Then, as $f$ is continuous and strictly
positive on $[\epsilon,a]$, it follows that there exists
$x_\epsilon\in [\epsilon,a]$ such that
$0<f(x_\epsilon)=\min_{\epsilon\leq y\leq a} f(y)$. Define
$\delta(\epsilon)=f(x_\epsilon)$. Then, if $0<x<\delta(\epsilon)$,
we have that $f^+(x)\leq \epsilon$. To justify this, suppose to the
contrary that $f^+(x')>\epsilon$ for some $x'\in
(0,\delta(\epsilon))$. Then $f^+(x')=\sup\{z>0:f(z)=x'\}>\epsilon$.
Now, for $f^+(x')=:z'> \epsilon$, we have $f(z')\geq
f(x_\epsilon)=\delta(\epsilon)$. However, by hypothesis
$\delta(\epsilon)>x'$, so $f(z')>x'$. However,
$z'=f^{+}(x')=\sup\{z>0:f(z)=x'\}$ implies that $f(z')=x'$, so we
have a contradiction. Therefore, for every $\epsilon\in (0,a)$ there
exists a $\delta=\delta(\epsilon)>0$ such that if
 $0<x<\delta(\epsilon)$, we have that $f^+(x)\leq \epsilon$. Thus, as $f^+$ is a non--negative function, and $\epsilon\in (0,a)$ is arbitrary, this is precisely $\lim_{x\to 0^+} f^+(x)=0$.
The proof that $\lim_{x\to 0} f^-(x)=0$ is similar.

Note that $\lim_{x\to\infty} f^+(x)=\lim_{x\to\infty} f^-(x)=\infty$
(by \eqref{h.finfty}) so it is clear that $y\mapsto
\overline{x}(f,y)$ is increasing, and moreover that
$\lim_{y\to\infty} \overline{x}(f,y)= \infty$.
Also, as  $\lim_{x\to 0^+} f^+(x)=\lim_{x\to 0} f^-(x)=0$, we have
that $\lim_{y\to 0} \overline{x}(f,y)= 0$,
which proves part (i).

To prove part (ii),
suppose first that there is $x>0$ such that ${f_1}^+(x)<{f_2}^+(x)$.
By definition, $f_1(z)>x$ for all $z>{f_1}^+(x)$. Since
${f_1}^+(x)<{f_2}^+(x)$, we have $f_1({f_2}^+(x))>x$. But
$f_2({f_2}^+(x))\geq f_1({f_2}^+(x))$ by \eqref{eq.f2f1}. Hence
$f_2({f_2}^+(x))>x$. But $f_2({f_2}^+(x))=x$, by definition, so we
have the contradiction $x>x$. Hence
\begin{equation} \label{eq.if1vsif2}
{f_1}^+(x)\geq {f_2}^+(x), \quad x>0.
\end{equation}
Suppose next there is $y<0$ such that $f_{1}^-(y)>f_{2}^-(y)$. By
definition, $f_1(z)<y$ for $z<f_{1}^-(y)$. Since
$f_2^-(y)<f_1^-(y)$, it follows that $f_1(f_2^-(y))<y$. By
\eqref{eq.f2f1}, we have $-f_2(u)\geq -f_1(u)$ for all $u<0$. Hence
with $u=f_2^-(y)$, we get $-f_2(f_2^-(y))\geq -f_1(f_2^-(y))>-y$.
But $f_2(f_2^-(y))=y$, by definition, so we have
$-y=-f_2(f_2^-(y))\geq -f_1(f_2^-(y))>-y$, a contradiction. Thus we
have $f_1^-(y)\leq f_2^-(y)$ for all $y<0$, or
\begin{equation} \label{eq.if1vsif22}
-f_1^-(y)\geq -f_2^-(y), \quad y<0.
\end{equation}
Therefore, it follows from \eqref{def.overlineX},
\eqref{eq.if1vsif2} and \eqref{eq.if1vsif22} that
\begin{align*}
\overline{x}(f_2,y)
&=2y+\max({f_2}^+(y),-f_2^-(-y))\\
&\leq 2y+
\max({f_1}^+(y),-{f_1}^-(-y))=\overline{x}(f_1,y),
\end{align*}
as required.

\end{document}